\theoremstyle{plain}
\newtheorem{theorem}{Theorem}[section]
\newtheorem{thm}[theorem]{Theorem}
\newtheorem{cor}[theorem]{Corollary}
\newtheorem{prop}[theorem]{Proposition}
\newtheorem{lem}[theorem]{Lemma}
\newtheorem{conj}[theorem]{Conjecture}
\newtheorem{defi}[theorem]{Definition}
\newtheorem{example}[theorem]{Example}
\newtheorem{rem}[theorem]{Remark}
\newcommand{\Glie}{\mathfrak{g}}
\newcommand{\Yim}{\mathcal{Y}}
\newcommand{\Hlie}{\mathfrak{h}}
\newcommand{\ZZ}{\mathbb{Z}}
\newcommand{\CC}{\mathbb{C}}
\newcommand{\C}{\mathbb{C}}
\newcommand{\Q}{\mathbb{Q}}
\newcommand{\Z}{\mathbb{Z}}
\newcommand{\g}{\mathfrak{g}}
\newcommand{\bo}{\mathfrak{b}}
\newcommand{\tb}{\mathbf{\mathfrak{t}}}
\newcommand{\Psib}{\mbox{\boldmath$\Psi$}}
\newcommand{\Psibs}{\scalebox{.7}{\boldmath$\Psi$}}
\newcommand{\qbin}[2]{{\left[
\begin{matrix}{\,\displaystyle #1\,}\\
{\,\displaystyle #2\,}\end{matrix}
\right]
}}
\newcommand{\nc}{\newcommand}
\nc{\on}{\operatorname}
\nc{\la}{\lambda}
\nc{\wh}{\widehat}
\nc{\wt}{\widetilde}
\nc{\sw}{{\mathfrak s}{\mathfrak l}}
\nc{\ghat}{\wh{\g}}
\nc{\hhat}{\wh{\h}}
\nc{\mc}{\mathcal}
\nc{\bi}{\bibitem}
\nc{\pa}{\partial}
\nc{\ppart}{(\!(t)\!)}
\nc{\pparl}{(\!(\la)\!)}
\nc{\zpart}{(\!(z^{-1})\!)}
\nc{\n}{{\mathfrak n}}
\nc{\ol}{\overline}
\nc{\mb}{\mathbf}
\nc{\bb}{{\mathfrak b}}
\nc{\su}{\wh\sw_2}
\nc{\h}{{\mathfrak h}}
\nc{\can}{\on{can}}
\nc{\ntil}{\wt{\n}}
\nc{\pone}{{\mathbb P}^1}
\nc{\bs}{\backslash}
\nc{\al}{\alpha}
\nc{\gt}{{\mathfrak g}'}
\nc{\ds}{\displaystyle}
\nc{\OO}{{\mc O}}
\newcommand{\mfr}{\mathfrak{r}}
\theoremstyle{definition}
\begin{document}

\begin{title}[Extremal monomial property of $q$-characters and
    polynomiality of $X$-series] {Extremal monomial property of
    $q$-characters and polynomiality of the $X$-series}
\end{title}

\author{Edward Frenkel}

\address{Department of Mathematics, University of California,
  Berkeley, CA 94720, USA}

\author[David Hernandez]{David Hernandez}

\address{Universit\'e Paris Cit\'e and Sorbonne Universit\'e, CNRS, IMJ-PRG, F-75006, Paris, France}

\dedicatory{To Vyjayanthi Chari on her birthday}

\begin{abstract}
The character of every irreducible finite-dimensional representation
of a simple Lie algebra has the highest weight property. The
invariance of the character under the action of the Weyl group $W$
implies that there is a similar ``extremal weight property'' for every
weight obtained by applying an element of $W$ to the highest
weight. In this paper we conjecture an analogous ``extremal monomial
property'' of the $q$-characters of simple finite-dimensional modules
over the quantum affine algebras, using the braid group action on
$q$-characters defined by Chari. In the case of the identity element
of $W$, this is the highest monomial property of $q$-characters proved
in \cite{Fre2}. Here we prove it for simple reflections. Somewhat
surprisingly, the extremal monomial property for each $w \in W$ turns
out to be equivalent to polynomiality of the ``$X$-series''
corresponding to $w$, which we introduce in this paper. We show that
these $X$-series are equal to certain limits of the generalized Baxter
operators for all $w \in W$. Thus, we find a new bridge between
$q$-characters and the spectra of XXZ-type quantum integrable models
associated to quantum affine algebras. This leads us to conjecture
polynomiality of all generalized Baxter operators, extending the
results of \cite{FH}.
	\end{abstract}

\maketitle

\setcounter{tocdepth}{2}
\tableofcontents

\setcounter{tocdepth}{1}

\section{Introduction}

The theory of $q$-characters of finite-dimensional representations of
quantum affine algebras introduced in \cite{Fre} is a powerful tool to
investigate the structure of these representations and the
corresponding category, as well as the spectra of the Hamiltonians in
the quantum integrable models of XXZ type associated to these
representations. However, questions such as explicit
formulas for the $q$-characters of arbitrary simple modules still remain
open. In this paper we introduce a new property of
  $q$-characters that we believe can be useful for tackling these
  questions.

Let $U_q(\ghat)$ be the quantum affine algebra associated to a
finite-dimensional simple Lie algebra $\mathfrak{g}$.  Recall that the
$q$-character $\chi_q(V)$ of a finite-dimensional representation $V$
of $U_q(\ghat)$ is defined in \cite{Fre} as an element of the ring
$$\Yim := \Z[Y_{i,a}^{\pm 1}]_{i\in I, a\in\mathbb{C}^\times},$$ where
$I= \{1,\ldots, n\}$ and $n$ is the rank of $\g$. The variables
$Y_{i,a}$ are the analogues of the fundamental weights $\omega_i$ of
$\g$. There are also Laurent monomials $A_{i,a}$, $i\in I,
a\in\mathbb{C}^\times$, which are the analogues of the simple roots
$\al_i, i \in I$, of $\g$ in the following sense: Let $P$ be the
lattice of integral weights of $\g$ and $Z[P]$ its group
ring. Consider the homomorphism $\varpi: \Yim \to \Z[P]$ given by
$\varpi(Y_{i,a}) = \omega_i$ for all $i \in I, a \in \C^\times$. Then
$\varpi(A_{i,a}) = \alpha_i$.

Furthermore, for any finite-dimensional
$U_q(\ghat)$-module $V$, consider its restriction $\ol{V}$ to $U_q(\g)
\subset U_q(\ghat)$. Then the ordinary character $\chi(\ol{V})$ of
$\ol{V}$, viewed as an element of $\Z[P]$, is equal to
$\varpi(\chi_q(L))$, so the $q$-character of $V$ is a refinement of
its ordinary character.

The ordinary character $\chi(U)$ of any simple finite-dimensional
$U_q(\g)$-module $U$ (which coincides with the character of the
corresponding $\g$-module) has the familiar {\em highest weight
  property}: there is a dominant integral weight $\la \in P^+ \subset
P$, such that all other weights $\mu \in P$ in $\chi(U)$ have the form
$\mu = \la - \sum_{i \in I} a_i \al_i$, where $a_i \in \Z_{\geq 0}, i
\in I$.

The $q$-characters of simple $U_q(\ghat)$-modules have a similar
property, which was conjectured in \cite{Fre} and proved in
\cite{Fre2}. Namely, if $L$ is a simple $U_q(\ghat)$-module, then
there is a monomial $m$ in the variables $Y_{i,a}$ (raised to
non-negative powers only), called the {\em highest monomial}, such that the
$q$-character $\chi_q(L)$ of $L$ satisfies
\begin{equation}    \label{mainthm1}
\chi_q(L)\in m \cdot \mathbb{Z}[A_{i,a}^{-1}]_{i \in I,
  a\in\mathbb{C}^\times}.
\end{equation}
The highest monomial $m$ corresponds to the ordinary weight $\la :=
\varpi(m)$. Note that the construction of simple $U_q(\ghat)$-modules
given in \cite{Dri2,CP} immediately implies that the $\la$-weight
subspace in $\ol{L}$ is one-dimensional and so $m$ is the only
monomial in $\chi_q(L)$ with the ordinary
weight $\lambda$ (i.e. $\varpi(m) = \lambda$). Now, the
statement \eqref{mainthm1} (proved in \cite{Fre2}) means
that every other monomial in $\chi_q(L)$ has the form $m
\prod_{k=1,\ldots,p} A_{i_k,a_k}^{-1}$ for some $i_k \in I, a_k \in
\C^\times$. This statement has important consequences,
such as an algorithm \cite{Fre2} for computing the $q$-characters of
the fundamental representations of $U_q(\ghat)$.

Next, recall that the Weyl group $W$ of $\g$ acts on $P$. The
$W$-invariance of the character $\chi(U)$ of any simple
finite-dimensional $U_q(\g)$-module $U$ (which is equal to the
character of the corresponding simple $\g$-module) implies that
$\chi(U)$ has the following ``extremal weight property'' for each
element $w \in W$: all weights $\mu \in P$ appearing in $\chi(U)$ have
the form
$$
\mu = w(\la) - \sum_{i \in I} a_i w(\al_i),
$$
where $a_i \in \Z_{\geq 0}, i \in I$. Here $w(\la), w \in W$, is the
{\em extremal weight} of $U$ corresponding to $w$. Just like the
highest weight $\la$
of $U$, which corresponds to $w=e$, all extremal weights have
multiplicity one.

In this paper we describe a generalization of the extremal weight
property for simple finite-dimensional
$U_q(\ghat)$-modules.

To explain it, we use the braid group action on $\Yim$, which was
introduced by Chari in \cite{C}. This action is an analogue of the
$W$-action on $\Z[P]$. In fact, it is
intertwined with the $W$-action on $\Z[P]$ under the homomorphism
$\varpi$. Given $w \in W$, let $w = s_{i_1} \ldots s_{i_k}$ be its
reduced decomposition into simple
reflections $s_i, i \in I$. Then $T_w := T_{i_1} \ldots T_{i_k}$,
where $T_i, i \in I$, are the generators of the Chari braid group
action, is a well-defined automorphism of $\Yim$ assigned to
$w$. Moreover, $T_w$ preserves the group of monomials in $\Yim$.

Chari proved in \cite{C} that for every $w \in W$, the monomial
$T_w(m)$, where $m$ denotes the highest monomial of a simple
$U_q(\ghat)$-module $L$, appears in $\chi_q(L)$ and is in fact the
unique monomial $m_w$ in $\chi_q(L)$ with the ordinary
weight $\lambda$ (i.e. $\varpi(m_w) = w(\lambda)$).

Let $A_{i,a}^w := T_w(A_{i,a}), i \in I$ (these are the $q$-character
analogues of $w(\al_i)$). In this paper we conjecture the
  following {\em extremal monomial property} of the $q$-character of
  every simple module $L(m)$ for each $w \in W$:
\begin{equation}\label{maincom}
  \chi_q(L)\in T_w(m) \cdot \mathbb{Z}[(A_{i,a}^w)^{-1}]_{i \in I,
    a\in\mathbb{C}^\times}.
\end{equation}
This is the statement of Conjecture \ref{mainc}, one of the two main
conjectures of the present paper.

For $w = e$ (the identity element of $W$), formula \eqref{maincom}
becomes formula \eqref{mainthm1}, which has been proved in
\cite{Fre2}. Its validity implies formula \eqref{maincom} in the case
when $w$ is the longest element of $W$, as we show in Theorem
\ref{long}. And in Theorem \ref{refl} below, we will prove
\eqref{maincom} when $w$ is a simple reflection in the Weyl group.

We will also prove a weaker property for all $w \in W$ (in which we allow
negative powers of $A_{i,a}^w$ to appear):
\begin{equation}    \label{mainthm}
\chi_q(L)\in T_w(m) \cdot \mathbb{Z}[(A_{i,a}^w)^{\pm 1}]_{i \in I,
  a\in\mathbb{C}^\times}.
\end{equation}
This is Theorem \ref{pma}.

Somewhat surprisingly, it turns out that Conjecture \ref{mainc}
(expressed by formula \eqref{maincom}) is equivalent to Conjecture
\ref{twpol1} (which is our second main conjecture) about certain
explicit formal Taylor power series $X_{w(\omega_i)}(z)$ in the
variable $z$ which are defined in terms of the Drinfeld-Cartan
generators of $U_q(\ghat)$. We call them the $X$-{\em series}. Namely,
it is the statement that all eigenvalues of the (suitably
renormalized) $X$-series on any simple finite-dimensional
$U_q(\ghat)$-module (which could in principle be infinite formal power
series in $z$) are in fact polynomials in $z$.

In the case $w=e$, Conjecture \ref{twpol1} was proved in \cite{FH},
and in the case when $w$ is the longest element of $W$ it was proved
in \cite{Z} (this provides an alternative proof of Conjecture
\ref{mainc} in this case). In Section \ref{tmproof} below, we will
prove Conjecture \ref{twpol1} (and hence Conjecture
\eqref{maincom}) when $w$ is a simple reflection $s_i \in W, i \in I$.

\bigskip

What motivated us to study the $X$-series in the first place is the
fact that they appear as certain limits of the {\em generalized Baxter
  operators} which we defined in our recent paper \cite{FH4}. In that
paper, we introduced a family of simple modules
$L(\Psib_{w(\omega_i),a}), w\in W, i\in I$, in the category
$\mathcal{O}$ of modules over the Borel subalgebra of $U_q(\ghat)$
defined in \cite{HJ}. The generalized Baxter operators are
the transfer-matrices associated to these modules (up to some
normalization factors). In this paper, it will be more convenient to
consider a ``dual picture''; namely, we introduce the modules
$L'(\Omega(\Psib_{w(\omega_i),a})), w\in W, i\in I$, in the ``dual
category'' $\mathcal{O}^*$ (here $\Omega$ is an automorphism on
$\ell$-weights, see Sections \ref{dualcat} and \ref{nver}
for details). The corresponding transfer-matrices, which we denote by
$t_{w(\omega_i),a}(z,u)$, form the family of the ``dual'' generalized Baxter
operators.

In our earlier work \cite{FH} we considered the case $w =
e$, which corresponds to the transfer-matrices associated to the
modules $L'(\Psib_{\omega_i,a})$, $i
\in I$, $a\in\mathbb{C}^\times$, in $\OO^*$ (they are called the
prefundamental representations). We proved that all eigenvalues of
these transfer-matrices on any
simple finite-dimensional $U_q(\ghat)$-module $L$ are {\em
  polynomials}, up to an overall scalar factor depending only on
$L$. This crucial property has been used in \cite{FH} to describe
explicitly the spectra of the Hamiltonians of the quantum integrable
models of XXZ type associated to $U_q(\ghat)$ (following a conjecture
from \cite{Fre}). In the special case $\g=sl_2$, this reproduces the
celebrated results of Baxter on the spectra of the Heisenberg XXZ
model, which corresponds to $U_q(\wh{sl}_2)$ (see the Introduction of
\cite{FH} for details).

Motivated by this result, in this paper we conjecture that a similar
polynomiality property holds for all generalized Baxter operators
$t_{w(\omega_i),a}(z,u)$ (and not just the ones corresponding to
$w=e$, which was the case considered in \cite{FH}); that is, the
eigenvalues of $t_{w(\omega_i),a}(z,u)$ on simple finite-dimensional
$U_q(\ghat)$-modules are {\em polynomials} (up to a scalar factor),
for all $w \in W$ and $i \in I$ (Conjecture \ref{trm}). Using the dual
extended $TQ$-relations of Section \ref{nver}, we then obtain many
alternative descriptions of the spectra of the Hamiltonians of the
XXZ-type model associated to $U_q(\ghat)$ in terms of these
polynomials, one for every $w \in W$.

Next, we observe that the $X$-series $X_{w(\omega_i)}(z), w \in W, i
\in I$, can be identified with certain limits of the corresponding
generalized Baxter operators $t_{w(\omega_i),a}(z,u)$ (after replacing $z$
by $za$). Therefore polynomiality of the eigenvalues of the
Baxter operators implies the same property for the eigenvalues of the
$X$-series -- and this is precisely the statement of our Conjecture
\ref{twpol1}!

Thus, Conjecture \ref{twpol1} may be viewed as a special (degenerate)
case of the polynomiality conjecture for the generalized Baxter
operators. As we explained above, Conjecture \ref{twpol1} has a
special feature: it is equivalent to the extremal monomial property of
the $q$-characters, which is Conjecture \ref{mainc} discussed
above. This, of course, leads us to a natural question: Is there an
analogue of Conjecture \ref{mainc} for the generalized Baxter
operators?

It is also worth adding that the results and conjectures of this paper
are strongly inspired by the extended $TQ$- and $QQ$-relations we
introduced in \cite{FH4} as well as other tools from quantum
integrable models (of XXZ type) associated to quantum affine
algebras. These relations also inspired our construction in \cite{FH3}
of the Weyl group action on an extension of the ring $\Yim$ of
$q$-characters. We proved in \cite{FH4} that the subring of
$q$-characters in $\Yim$ is precisely the ring of invariants of $\Yim$
under this Weyl group action. However, the image of a monomial in
$\Yim$ under this action is in general an infinite linear combination
of monomials, and therefore the Weyl group does not act by permuting
monomials of a given $q$-character (in contrast to the Weyl group
action on the ordinary characters). That's why Conjecture
\ref{maincom} does not follow directly from this Weyl group action.

\bigskip

The paper is organized as follows.  In Section \ref{back} we recall
various definitions and results regarding quantum affine algebras and
their representations. We also discuss representations of the Borel
subalgebra of a quantum affine algebra from the category $\OO$ and the
dual category $\OO^*$. In Section \ref{braida} we recall the braid
group action defined by Chari \cite{C} and its generalization which we
defined in \cite{FH4}.  In Section \ref{nsqc2} we introduce twisted
root monomials parametrized by elements of the Weyl group, state the
extremal monomial property of $q$-characters (Conjecture \ref{mainc}),
and prove its weaker version (Theorem \ref{pma}).  In Section
\ref{epairing} we introduce formal power series
  $X_{w(\omega_i)}(z), w \in W, i \in I$, of the Drinfeld-Cartan
  generators of $U_q(\ghat)$, which we call the $X$-series. We also
introduce a pairing which we prove to be invariant with respect to an
action of the Weyl group $W$ (Proposition \ref{equith}). 
  These results enable us to relate Conjecture \ref{mainc} to the
  polynomiality of the eigenvalues of the $X$-series (Conjecture
  \ref{twpol1}) in Section \ref{tmproof}. Namely, we derive from the
  $W$-invariance of our pairing some identities between the
  eigenvalues of the $X$-series on a simple finite-dimensional
  $U_q(\ghat)$-module (Corollary \ref{inveigw}).  We then prove that
  up to a normalization factor (depending only on the module) these
  eigenvalues are expansions of rational functions (Theorem
  \ref{rattw}). This result is equivalent to Theorem
  \ref{pma}. Moreover, we conjecture that in fact these rational
  functions are polynomials (Conjecture \ref{twpol1}) and show that
  this property is equivalent to Conjecture \ref{mainc}. Finally, we
  use some additional results to prove Conjecture \ref{twpol1} (and
  hence Conjecture \ref{mainc}) for the simple reflections from the Weyl
  group (Theorem \ref{symsimp}). In Section \ref{trans}, we recall
the definition of the transfer-matrices, introduce the
  generalized Baxter operators, and the $TQ$-relations from
  \cite{FH4}, as well as their duals. We conjecture that
polynomiality of the eigenvalues of the $X$-series
(Conjecture \ref{twpol1}) extends to the (suitably renormalized)
generalized Baxter operators.

\bigskip

\noindent{\bf Acknowledgments.} It is a pleasure to
  dedicate this paper to Vyjayanthi Chari who has made fundamental
  contributions to representation theory of quantum affine
  algebras. In particular, the braid group action on the $q$-characters
  that she introduced in \cite{C} is one of the key ingredients of
  the present paper.

\section{Background on quantum affine algebras}    \label{back}

In this section, we recall some definitions and results on quantum affine
algebras and modules over them that we will need in the paper.
See \cite{CP} and the surveys \cite{CH,L} for more details. We
also recall some results on modules from the category $\OO$ of
the Borel subalgebra of a quantum
affine algebra (see \cite{HJ, FH} for more details) and the
corresponding quantum integrable models.

\subsection{Simple Lie algebras and Weyl groups}\label{Liealg}

Let $\Glie$ be a finite-dimensional simple Lie algebra of rank $n$ and
${\mathfrak{h}}$ its Cartan subalgebra. Using the
conventions of \cite{ka}, we denote by $C = (C_{i,j})_{i,j\in I}$ the
corresponding
Cartan matrix, where $I=\{1,\ldots, n\}$ and by $h^\vee$
(resp. $r^\vee$) the dual Coxeter number (resp. the lacing number) of
$\Glie$.

Let $\{\alpha_i\}_{i\in I}$,
$\{\alpha_i^\vee\}_{i\in I}$, $\{\omega_i\}_{i\in I}$,
$\{\omega_i^\vee\}_{i\in I}$ be the sets of simple roots, simple
coroots, fundamental weights, and fundamental coweights. Let
$$
Q:=\bigoplus_{i\in I}\Z\alpha_i, \qquad Q^+:=\bigoplus_{i\in
  I}\Z_{\ge0}\alpha_i, \qquad P:=\bigoplus_{i\in I}\Z\omega_i, \qquad
P^\vee:=\bigoplus_{i\in I}\Z\omega_i^\vee,
$$
and let $P_\Q := P\otimes \Q$.

The set of roots is denoted by
$\Delta\subset Q$ and $\Delta_+ = (\Delta\cap Q^+)$ is the set of
positive roots.  Let
$D=\mathrm{diag}(d_1,\ldots,d_n)$ be the unique diagonal matrix such
that $B=DC$ is symmetric and $d_i$'s are relatively prime positive
integers.  

We have a partial ordering on $P_\Q$ defined by the rule
$\omega\leq \omega'$ if and only if $\omega'-\omega\in Q^+$.

There is a unique such form $\kappa_0$ \cite{ka}
with the property that the dual form $(\kappa_0|_{\h})^{-1}$ on $\h^*$
to the restriction $\kappa_0|_{\h}$ of $\kappa_0$ to $\h \subset \g$
satisfies $(\alpha_i,\alpha_i)=2d_i$. We denote the restriction of
$(\kappa_0|_{\h})^{-1}$ to $P \subset \h^*$ by $(\cdot,\cdot)$. We have for $i,j\in I$,
$$(\alpha_i,\alpha_j) = d_i C_{i,j}\quad \text{ and } \quad
(\alpha_i,\omega_j) = d_j \delta_{i,j}.$$ 
The bases $(\alpha_i)_{i\in I}$,
$(\omega_i/d_i)_{i\in I}$ are dual to each other with respect to this
form.

Let $W$ be the Weyl group of $\mathfrak{g}$. It is generated by the simple
reflections $s_i, i\in I$. For $w\in W$, denote by $l(w)$ its
length. The formula
$$
s_k(\omega_j) = \omega_j - \delta_{k,j}\alpha_j, \qquad j,k \in I
$$
defines an action of the group $W$ on $P$.
The form $(\cdot,\cdot)$ is $W$-invariant, that is for any
$\lambda, \mu\in P$, we have
\begin{equation}\label{winv}(w \lambda,w \mu) = (\lambda,\mu)\text{
    for any $w\in W$.}\end{equation}
%For $w = s_k$ simple reflection, $i,j\in I$, one has
%$$(s_k(\alpha_i), s_k(\omega_j)) 
%= (\alpha_i - C_{k,i}\alpha_k,\omega_j - \delta_{k,j}\alpha_j)
%= (\alpha_i,\omega_j) -\delta_{k,j}(d_i C_{i,j} + d_j C_{j,i} - 2r_jC_{j,i}) = %(\alpha_i,\omega_j).$$
%The Weyl group invariance implies that for $i,k \in I$ and $w\in W$ we have 
%$$(w\omega_i,\alpha_k) = (\omega_i,w^{-1}\alpha_k)$$
%which is equal to $d_i$ multiplied by the multiplicity of $\alpha_i$ in the decomposition of the root $w^{-1}\alpha_k$. This root is positive or negative, hence the sign of $(w\omega_i,\alpha_k)$ does not depend on $i\in I$. 

The following lemma is well-known (see \cite[Lemma 1.6, Corollary
    1.7, Proposition 1.15]{hu}).

\begin{lem}\label{wn} Let $w\in W$ and $i\in I$. We have the following.

(1) $w(\alpha_i)\in \Delta_+$ if and only if $l(ws_i) = l(w) + 1$.

(2) $w(\omega_i) = \omega_i$ if and only if $w$ is product of $s_j$, $j\neq i$.

(3) The $W$-orbits of the fundamental weights are disjoint. 
\end{lem}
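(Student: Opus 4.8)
\textbf{Proof proposal for Lemma \ref{wn}.}

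The plan is to reduce each of the three statements to standard facts about root systems and then cite Humphreys \cite{hu} for the parts that require real work. For part (1), the key observation is that $l(ws_i)$ differs from $l(w)$ by exactly $\pm 1$, and the classical result (see \cite[\S1.6--1.7]{hu}) is that $l(ws_i) = l(w) + 1$ precisely when $w$ does not send $\alpha_i$ to a negative root; equivalently, when $w(\alpha_i) \in \Delta_+$. So this is essentially just quoting the cited results. The subtle point one should double-check is the convention: whether one works with $ws_i$ or $s_i w$ and whether the relevant root is $\alpha_i$ or $w(\alpha_i)$; I would verify that with the conventions of \cite{ka,hu} the statement as written is correct, using the fact that $\ell(w s_i) = \#\{\alpha \in \Delta_+ : w(\alpha) \in -\Delta_+\} $ changes by adding or removing $\alpha_i$ from the relevant set.

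For part (2), I would argue as follows. If $w$ is a product of $s_j$ with $j \neq i$, then $w$ lies in the parabolic subgroup $W_{I \setminus \{i\}}$ generated by those reflections, and each generator $s_j$ ($j \neq i$) fixes $\omega_i$ by the displayed formula $s_k(\omega_j) = \omega_j - \delta_{k,j}\alpha_j$; hence $w(\omega_i) = \omega_i$. Conversely, suppose $w(\omega_i) = \omega_i$. Write a reduced decomposition $w = s_{j_1}\cdots s_{j_k}$ and suppose for contradiction that some $j_m = i$. Take the last such occurrence; then $w = w' s_i w''$ where $w''$ is a product of $s_j$ with $j \neq i$, so $w''(\omega_i) = \omega_i$ and thus $w(\omega_i) = w' s_i(\omega_i) = w'(\omega_i - \alpha_i) = w'(\omega_i) - w'(\alpha_i)$. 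The cleaner route, which is what \cite[Prop.~1.15]{hu} provides, is the general fact that the stabilizer of a dominant weight $\omega_i$ (viewed in the closure of the dominant chamber, lying on exactly the walls $\alpha_j = 0$ for $j \neq i$ since $(\alpha_j,\omega_i) = d_i\delta_{i,j}$) is exactly the parabolic subgroup generated by the reflections in the walls containing it, i.e.\ $W_{I\setminus\{i\}}$. So I would simply invoke that stabilizer description.

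For part (3), suppose $\omega_i$ and $\omega_j$ with $i \neq j$ lie in the same $W$-orbit, say $w(\omega_i) = \omega_j$. Both are dominant weights, and a standard fact is that each $W$-orbit meets the closed dominant chamber in exactly one point; since $\omega_i$ and $\omega_j$ are both in that chamber and in the same orbit, we would get $\omega_i = \omega_j$, contradicting $i \neq j$ (the fundamental weights are linearly independent). Alternatively one can apply part (2): $w(\omega_i) = \omega_j$ forces, after comparing which walls each weight lies on, that $i = j$. I expect no real obstacle here; the only thing to be careful about is that we are working in $P$ (not $P_\Q$) and that the $\omega_i$ are genuinely distinct, which is immediate since they form a basis. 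Overall, the main ``obstacle'' is purely bookkeeping: making sure the length/root conventions in part (1) match those of the references, since everything else is a direct citation of \cite{hu}.
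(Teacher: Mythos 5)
Your proposal is correct and matches the paper's treatment: the paper gives no proof at all, simply declaring the lemma well-known and citing \cite[Lemma 1.6, Corollary 1.7, Proposition 1.15]{hu}, which are exactly the facts you invoke. Your added sketches (the stabilizer-is-parabolic argument for (2) and the orbit-meets-the-dominant-chamber-once argument for (3)) are the standard proofs and are accurate.
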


Denote by $w_0$ the longest element of the Weyl group. We have the bar
involution on the set $I$ defined by the formula $w_0(\alpha_i) =
-\alpha_{\overline{i}}, i \in I$.

\subsection{Quantum affine algebras and their Borel
  subalgebras}    \label{bb}

Denote by $\wh{\Glie}$ the untwisted affine Kac-Moody Lie algebra
associated to $\Glie$. Let $(C_{i,j})_{0\leq i,j\leq n}$ be the
corresponding indecomposable Cartan matrix. We will use the Kac labels \cite{ka} denoted by
$a_0,\cdots,a_n$. We have $a_0 = 1$ and
we set $\alpha_0 = -(a_1\alpha_1 + a_2\alpha_2 + \cdots +
a_n\alpha_n)$.

Once and for all, fix a non-zero complex number $q$ that is not a
root of unity and 
  choose $h\in\CC$
such that $q = e^h$ and define $q^r \in \C^\times$ for any
$r\in\Q$ as $e^{hr}$. Since $q$ is not a root of unity, the map $\Q \to
  \C^\times$ sending $r \mapsto q^r$ is injective. Let $q^\Q := \{ e^{rh}, r \in \Q \}$ and 
	$\tb^\times := \on{Maps}(I,q^\Q) =
      \bigl(q^\Q\bigr)^I$ seen as a
      group with pointwise multiplication. We have a group isomorphism 
			$\overline{\phantom{u}}:P_\Q \longrightarrow
\tb^\times$ by setting
$$\overline{r \omega_i}(j):=q_i^{r \delta_{i,j}} = e^{r d_i h
  \delta_{i,j}}, \qquad i,j \in I, r \in \Q.
$$
The partial ordering on $P$ induces a partial ordering on
$\tb^\times$.

We will use the standard symbols for the $q$-integers
\begin{align*}
[m]_z=\frac{z^m-z^{-m}}{z-z^{-1}}, \quad
[m]_z!=\prod_{j=1}^m[j]_z,
 \quad 
\qbin{s}{r}_z
=\frac{[s]_z!}{[r]_z![s-r]_z!}. 
\end{align*}
We set $q_i=q^{d_i}$.

The quantum affine algebra $U_q(\ghat)$ of level 0 is the $\C$-algebra
with the generators $e_i,\ f_i,\ k_i^{\pm1}$ ($0\le i\le n$) and the
following relations:
\begin{align*}
&k_ik_j=k_jk_i,\quad k_0^{a_0}k_1^{a_1}\cdots k_n^{a_n}=1,\quad
&k_ie_jk_i^{-1}=q_i^{C_{i,j}}e_j,\quad k_if_jk_i^{-1}=q_i^{-C_{i,j}}f_j,
\\
&[e_i,f_j]
=\delta_{i,j}\frac{k_i-k_i^{-1}}{q_i-q_i^{-1}},
\\
&\sum_{r=0}^{1-C_{i.j}}(-1)^re_i^{(1-C_{i,j}-r)}e_j e_i^{(r)}=0\quad (i\neq j),
&\sum_{r=0}^{1-C_{i.j}}(-1)^rf_i^{(1-C_{i,j}-r)}f_j f_i^{(r)}=0\quad (i\neq j)\,.
\end{align*}
Here $0\le i,j\le n$ and $x_i^{(r)}=x_i^r/[r]_{q_i}!$ ($x_i=e_i,f_i$). 
The algebra $U_q(\wh{\Glie})$ has a Hopf algebra structure with
\begin{align*}
&\Delta(e_i)=e_i\otimes 1+k_i\otimes e_i,\quad
\Delta(f_i)=f_i\otimes k_i^{-1}+1\otimes f_i,
\quad
\Delta(k_i)=k_i\otimes k_i\,\text{ for $0\leq i\leq n$.}
\end{align*}

The algebra $U_q(\wh{\Glie})$ admits also a Drinfeld presentation with generators 
\begin{align}\label{dgen}
  x_{i,r}^{\pm}\ (i\in I, r\in\Z), \quad \phi_{i,\pm m}^\pm\ (i\in I,
  m\in\mathbb{Z}), \quad k_i^{\pm 1}\ (i\in I).
\end{align}
We will use the generating series $(i\in I)$
\begin{equation}\label{phrel}
  \phi_i^\pm(z) = \sum_{m\in \mathbb{Z}}\phi_{i, m}^\pm z^{ m} =
k_i^{\pm 1}\on{exp}\left(\pm (q_i - q_i^{-1})\sum_{r > 0} h_{i,\pm
  r} z^{\pm r} \right).\end{equation} 

The algebra $U_q(\wh{\Glie})$ has a $\ZZ$-grading defined by the formulas
$\on{deg}(e_i) = \on{deg}(f_i) = \on{deg}(k_i^{\pm 1}) = 0$ for $i\in
I$ and $\on{deg}(e_0) = - \on{deg}(f_0) = 1$.  

For any $a\in\CC^\times$, there is an automorphism $\tau_a :
U_q(\wh{\Glie})\rightarrow U_q(\wh{\Glie})$ such that $\tau_a(g) = a^m
g$ for any element $g$ of degree $m\in\ZZ$.

The {\em Borel subalgebra} $U_q(\wh\bb)$ is, by definition,
the subalgebra of $U_q(\wh{\Glie})$ generated by $e_i$ 
and $k_i^{\pm1}$ with $0\le i\le n$. It is a Hopf subalgebra of
$U_q(\wh{\Glie})$. For $a\in \C^\times$, the subalgebra $U_q(\wh\bb)$
is stable under $\tau_a$. We will denote its restriction to
$U_q(\wh\bb)$ by the same symbol.

\subsection{Category $\mathcal{O}$ of modules over the Borel
  subalgebra}    \label{catO}

For a $U_q(\wh\bb)$-module $V$ and $\omega\in \tb^\times$, we set the weight subspace of weight $\omega$:
\begin{align}
V_{\omega}:=\{v\in V \mid  k_i\, v = \omega(i) v\ (\forall i\in I)\}\,.
\label{wtsp}
\end{align}
We say that $V$ is Cartan-diagonalizable 
if $V=\underset{\omega\in \tb^\times}{\bigoplus}V_{\omega}$.

\begin{defi} A series $\Psib=(\Psi_{i, m})_{i\in I, m\geq 0}$ 
of complex numbers such that $\Psi_{i,0}
  \in q^\Q$ for all $i\in I$ is called an $\ell$-weight.

We denote by $\tb^\times_\ell$ the set of $\ell$-weights. 
\end{defi}

We will identify the collection $(\Psi_{i, m})_{m\geq 0}$ with its
generating series 
\begin{align*}
\Psib = (\Psi_i(z))_{i\in I},
\qquad
\Psi_i(z) := \underset{m\geq 0}{\sum} \Psi_{i,m} z^m.
\end{align*}

For example, for $i\in I$ and $a\in\mathbb{C}^\times$ we have the prefundamental $\ell$-weight $\Psib_{i,a}$ defined as 

\begin{align} 
(\Psib_{i,a})_j(z) = \begin{cases}
(1 - za)^{\pm 1} & (j=i)\,,\\
1 & (j\neq i)\,.\\
\end{cases} 
\end{align}

Since each $\Psi_i(z)$ is an invertible formal power series,
$\tb^\times_\ell$ has a natural group structure.  We also have a
surjective homomorphism of groups $\varpi :
  \tb^\times_\ell\rightarrow \tb^\times\simeq P_\Q$ defined by the assignment $(\Psib_i(z))_{i\in I}\mapsto(\Psib_i(0))_{i\in I} $.
  
\begin{defi} A $U_q(\wh\bb)$-module $V$ is called a
highest $\ell$-weight module with the highest $\ell$-weight
$\Psib\in \tb^\times_\ell$ if there is $v\in V$ such that 
$V =U_q(\wh\bb)v$ 
and the following equations hold:
\begin{align*}
e_i\, v=0\quad (i\in I)\,,
\qquad 
\phi_{i,m}^+v=\Psi_{i, m}v\quad (i\in I,\ m\ge 0)\,.
\end{align*}
\end{defi}

The highest $\ell$-weight $\Psib\in \tb^\times_\ell$ is uniquely
determined by $V$. The vector $v$ is said to be a highest
$\ell$-weight vector of $V$.

\begin{lem}\cite[Section 3.2]{HJ}\label{simple} 
For any $\Psib\in \tb^\times_\ell$, there exists a simple highest
$\ell$-weight module $L(\Psib)$ over $U_q(\wh\bb)$ with highest
$\ell$-weight $\Psib$. This module is unique up to isomorphism.
\end{lem}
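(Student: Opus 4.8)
The plan is to carry over to the present setting the classical construction of a simple highest-weight module as the unique irreducible quotient of a universal (``Verma-type'') module; this is the argument of \cite[Section 3.2]{HJ}. Fix $\Psib\in\tb^\times_\ell$. The structural input I would invoke is the triangular decomposition of the Borel subalgebra: $U_q(\wh\bb)$ contains the Drinfeld generators $x_{i,r}^+$ $(i\in I,\,r\ge0)$, $x_{i,r}^-$ $(i\in I,\,r\ge1)$, $\phi_{i,m}^+$ $(i\in I,\,m\ge0)$ and $k_i^{\pm1}$, and multiplication induces a linear isomorphism $U_q(\wh\bb)\cong\mc U^-\otimes\mc U^0\otimes\mc U^+$, where $\mc U^-=\langle x_{i,r}^-\rangle_{r\ge1}$, $\mc U^0=\langle \phi_{i,m}^+,k_i^{\pm1}\rangle$ is commutative, and $\mc U^+=\langle x_{i,r}^+\rangle_{r\ge0}$; set $\mc U^{\ge0}:=\mc U^0\mc U^+$, a subalgebra. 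This is the standard PBW structure of $U_q(\wh\bb)$ coming from the compatibility of its Chevalley and Drinfeld presentations; note that $e_0\in\mc U^-\mc U^0$, so a highest $\ell$-weight vector need not be annihilated by $e_0$.

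Let $\C v_\Psib$ be the one-dimensional $\mc U^{\ge0}$-module on which $\mc U^+$ acts through its augmentation ($x_{i,r}^+\mapsto 0$) and $\mc U^0$ acts through the algebra character determined by $\Psib$, namely $\phi_{i,m}^+\mapsto\Psi_{i,m}$ and $k_i\mapsto\Psi_{i,0}$ (invertible, as $\Psi_{i,0}\in q^\Q$); this is well defined because every monomial occurring in the $q$-Serre relations among the $x^+$'s or in the cross relations between $\mc U^0$ and $\mc U^+$ contains a factor $x_{k,s}^+$, hence acts by zero on $v_\Psib$. Set $M(\Psib):=U_q(\wh\bb)\otimes_{\mc U^{\ge0}}\C v_\Psib$. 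By the triangular decomposition $M(\Psib)\cong\mc U^-\otimes\C v_\Psib$ as a vector space, so $M(\Psib)\ne 0$ and its cyclic generator $v_\Psib$ is nonzero; since $e_i=x_{i,0}^+$ annihilates $v_\Psib$ for $i\in I$ and $\phi_{i,m}^+v_\Psib=\Psi_{i,m}v_\Psib$, it is a highest $\ell$-weight module of highest $\ell$-weight $\Psib$. Moreover $M(\Psib)$ is universal: if $v'$ is a cyclic highest $\ell$-weight vector of highest $\ell$-weight $\Psib$ in any $U_q(\wh\bb)$-module, then $h_{i,1}$ (a polynomial in $k_i^{-1}$ and the $\phi_{i,m}^+$) acts on $v'$ by a scalar, and the Drinfeld relation expressing $x_{i,r+1}^+$ as a nonzero multiple of $[h_{i,1},x_{i,r}^+]$ gives, by induction on $r$ starting from $x_{i,0}^+v'=e_iv'=0$, that $x_{i,r}^+v'=0$ for all $r\ge0$; hence $\mc U^{\ge0}$ acts on $v'$ through the same character and $u\mapsto uv'$ factors through a surjection $M(\Psib)\twoheadrightarrow U_q(\wh\bb)v'$. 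Finally $\mc U^-$ is graded by $-Q^+$ (each $x_{i,r}^-$ has weight $-\alpha_i$), so $M(\Psib)$ is the direct sum of its $k$-weight spaces $M(\Psib)_{\overline{\la-\beta}}$, $\beta\in Q^+$, where $\overline{\la}:=(\Psi_{i,0})_{i\in I}$; and $M(\Psib)_{\overline{\la}}=\C v_\Psib$ is one-dimensional, because a nonzero $\beta$ yields a strictly smaller $k$-weight (as $q$ is not a root of unity).

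With these facts the conclusion is standard. A proper submodule of $M(\Psib)$ does not contain $v_\Psib$ and, being $k$-stable, meets $M(\Psib)_{\overline{\la}}=\C v_\Psib$ only in $0$; hence the sum $N(\Psib)$ of all proper submodules still avoids $v_\Psib$, so $N(\Psib)$ is the unique maximal proper submodule and $L(\Psib):=M(\Psib)/N(\Psib)$ is a simple, nonzero highest $\ell$-weight module of highest $\ell$-weight $\Psib$. For uniqueness, any simple such module is, by universality, a nonzero quotient $M(\Psib)/N$ with $N$ necessarily maximal, hence equal to $L(\Psib)$. The only genuinely non-formal ingredient is the triangular decomposition of $U_q(\wh\bb)$ and the location of $e_0$ within it, which rests on the (nontrivial) comparison of the Chevalley and Drinfeld presentations and the associated PBW bases; this I would quote rather than reprove. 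The one mild departure from the finite-dimensional picture is that the weight spaces $M(\Psib)_{\overline{\la-\beta}}$ are infinite-dimensional for $\beta\ne 0$ --- but only the one-dimensionality of the top space $M(\Psib)_{\overline{\la}}$ enters the argument.
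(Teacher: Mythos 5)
Your proposal is correct and follows essentially the same route as the source \cite{HJ} that the paper cites for this lemma without reproving it: a Verma-type module induced from the character of $\mc U^0\mc U^+$ via the triangular decomposition of $U_q(\wh\bb)$ in Drinfeld generators, the observation that $e_iv=0$ ($i\in I$) together with the action of the $\phi_{i,m}^+$ forces $x_{i,r}^+v=0$ for all $r\ge 0$, and the uniqueness of the maximal proper submodule coming from the one-dimensionality of the top weight space. No gaps to report.
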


\begin{defi}\cite{HJ}
For $i\in I$ and $a\in\CC^\times$, let $L_{i,a}^\pm = L(\Psib_{i,a}^{\pm 1})$.
We call $L_{i,a}^+$ (resp. $L_{i,a}^-$) a {\em positive (resp. negative)
prefundamental representation}.
\end{defi}

%For $i\in I$ and $a\in\CC^\times$, we will also consider the representation introduced in \cite{FH2} : 
%$$X_{i,a} = L(\wt{\Psib}_{i,a})\text{ where }$$
%$$\wt{\Psib}_{i,a} = \Psib_{i,a}^{-1}
% \left(\prod_{j|C_{i,j} = -
%     1}\Psib_{j,aq_i}\right)\left(\prod_{j|C_{i,j} =
%     -2}\Psib_{j,a}\Psib_{j,aq^2}\right)\left(\prod_{j|C_{i,j} =
%     -3}\Psib_{j,aq^{-1}}\Psib_{j,aq}\Psib_{j,aq^3}\right).$$

For $\lambda \in P_\Q$, we define the $1$-dimensional representation
$[\lambda] = L(\Psib_\lambda)$ with $\Psib_\lambda :=
(q_i^{\lambda(\alpha_i^\vee)})_{i\in I}$.

%Then the pullbacks of the $U_q(\wh\bb)$-modules $L_{i,b}^\pm$ by
%$\tau_a$ is $L_{i,ab}^\pm$.

For $\lambda\in \tb^\times$, let $D(\lambda )=
\{\omega\in \tb^\times \mid \omega\leq\lambda\}$. Consider the category $\mathcal{O}$ introduced in \cite{HJ}.

\begin{defi}\label{defo} A $U_q(\wh\bb)$-module $V$ 
is said to be in category $\mathcal{O}$ if

(i) $V$ is Cartan-diagonalizable;

(ii) for all $\omega\in \tb^\times$ we have 
$\dim (V_{\omega})<\infty$;

(iii) there exist a finite number of elements 
$\lambda_1,\cdots,\lambda_s\in \tb^\times$ 
such that the weights of $V$ are in 
$\underset{j=1,\cdots, s}{\bigcup}D(\lambda_j)$.
\end{defi}

It is easy to check that the category $\mathcal{O}$ is a monoidal category.

Let $\mfr$ be the subgroup of $\tb^\times_\ell$ consisting of
$\Psib$ such that $\Psi_i(z)$ is an expansion in positive powers of
$z$ of a rational function in $z$ for any $i\in I$.

\begin{thm}\label{class}\cite{HJ} Let $\Psib\in\tb^\times_\ell$. 
The simple module $L(\Psib)$ is an object of $\mathcal{O}$ if and only
if $\Psib\in \mfr$.
\end{thm}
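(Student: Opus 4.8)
The plan is to establish the two implications separately. For ``$\Psib\in\mfr\Rightarrow L(\Psib)\in\mathcal O$'' the idea is to realize $L(\Psib)$ as a subquotient of a tensor product of modules that are visibly in $\mathcal O$; the building blocks will be the prefundamental representations $L_{i,a}^{\pm}$ and the one-dimensional modules $[\lambda]$. Concretely, first note that $\mfr$ is precisely the subgroup of $\tb^\times_\ell$ generated by the $\Psib_{i,a}^{\pm1}$ ($i\in I$, $a\in\C^\times$) together with the constant $\ell$-weights $\Psib_\lambda$ ($\lambda\in P_\Q$): a power series $\Psi_i(z)$ with $\Psi_i(0)\in q^\Q$ expands a rational function if and only if it equals $\Psi_{i,0}\prod_k(1-za_{i,k})\prod_l(1-zb_{i,l})^{-1}$ for finitely many $a_{i,k},b_{i,l}\in\C^\times$, and, using that $(\omega_j)_j$ and $(\alpha_j^\vee)_j$ are dual bases and $\Psi_{j,0}\in q^\Q$ for all $j$, one can choose $\lambda\in P_\Q$ with $q_j^{\lambda(\alpha_j^\vee)}=\Psi_{j,0}$ for every $j$; then $\Psib=\Psib_\lambda\cdot\prod_{j,k}\Psib_{j,a_{j,k}}\cdot\prod_{j,l}\Psib_{j,b_{j,l}}^{-1}$. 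Now $[\lambda]$ is one-dimensional, hence in $\mathcal O$, and each $L_{i,a}^{\pm}$ lies in $\mathcal O$ (this is part of the construction of the prefundamental representations in \cite{HJ} as renormalized limits of Kirillov--Reshetikhin modules; it reduces to a weight bound of the form required by Definition \ref{defo}(iii)). Since $\mathcal O$ is monoidal and closed under submodules and quotients (which is immediate from Definition \ref{defo}), the tensor product $T$ of $[\lambda]$ with the modules $L_{j,a_{j,k}}^{+}$ and $L_{j,b_{j,l}}^{-}$ lies in $\mathcal O$; the tensor product of the highest $\ell$-weight vectors of the factors is a highest $\ell$-weight vector of $T$ of highest $\ell$-weight $\Psib$ (using $\Delta(e_i)=e_i\otimes1+k_i\otimes e_i$ and the group-like behaviour of $\phi_i^+(z)$ on highest $\ell$-weight vectors modulo lower-order terms). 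Hence the submodule it generates is highest $\ell$-weight with highest $\ell$-weight $\Psib$, so by Lemma \ref{simple} its simple quotient is $L(\Psib)$, which is then a subquotient of $T$ and lies in $\mathcal O$.

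For the converse, ``$L(\Psib)\in\mathcal O\Rightarrow\Psib\in\mfr$'', the idea is a rank-one reduction. Let $v$ be a highest $\ell$-weight vector and fix $i\in I$; recall $x_{i,r}^+v=0$ and $\phi_{i,m}^+v=\Psi_{i,m}v$ for all $r,m\ge0$. Using the triangular decomposition of $U_q(\wh\bb)$ and the fact that the only Drinfeld generators in $U_q(\wh\bb)$ lowering the weight by exactly $\alpha_i$ are the $x_{i,m}^-$ with $m\ge1$, the weight subspace of $L(\Psib)=U_q(\wh\bb)v$ lying one $\alpha_i$-step below the top is spanned by $\{x_{i,m}^-v:m\ge1\}$. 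By Definition \ref{defo}(ii) this space is finite-dimensional, so there is a nontrivial relation $\sum_{m=1}^{M}c_m\,x_{i,m}^-v=0$, which we take with $c_M\neq0$. Applying $x_{i,r}^+$ for $r\ge0$ and using $x_{i,r}^+v=0$ together with the Drinfeld relation $[x_{i,r}^+,x_{i,m}^-]=(q_i-q_i^{-1})^{-1}\phi_{i,r+m}^+$ (the $\phi^-$-term vanishing because $r+m\ge1$), we get $\sum_{m=1}^{M}c_m\Psi_{i,r+m}=0$ for all $r\ge0$. Since $c_M\neq0$, this is a linear recurrence with constant coefficients determining $\Psi_{i,N}$ from $\Psi_{i,N-M+1},\dots,\Psi_{i,N-1}$ for all $N\ge M$, hence $\Psi_i(z)=\sum_{m\ge0}\Psi_{i,m}z^m$ expands a rational function. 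As $i\in I$ was arbitrary, $\Psib\in\mfr$.

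The main obstacle lies in the second implication, in the rank-one input hidden behind the claim that the one-step-down weight space is spanned by $\{x_{i,m}^-v:m\ge1\}$: one must pin down exactly which Drinfeld generators (and in which degrees) lie in $U_q(\wh\bb)$ and verify the relevant triangular decomposition, so that a single linear dependence among the $x_{i,m}^-v$ genuinely propagates, via the $[x^+,x^-]$ relation, to a constant-coefficient recurrence for $(\Psi_{i,m})_{m\ge0}$; this is essentially the $\su$ computation underlying the whole classification. In the first implication the only non-formal point is that the prefundamental representations belong to $\mathcal O$, which rests on their explicit construction and a character estimate.
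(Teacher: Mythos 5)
The paper gives no proof of this statement --- it is quoted directly from \cite{HJ} --- and your argument is a faithful reconstruction of the proof given there: factorization of a rational $\ell$-weight into prefundamental and constant $\ell$-weights together with the monoidal structure of $\mathcal{O}$ and its closure under subquotients for one direction, and the rank-one constant-coefficient recurrence $\sum_m c_m\Psi_{i,r+m}=0$, extracted from a linear dependence among the vectors $x_{i,m}^-v$ in a finite-dimensional weight space, for the other. The two nontrivial inputs you flag (that the prefundamental representations $L_{i,a}^{\pm}$ lie in $\mathcal{O}$, and the triangular decomposition of $U_q(\wh\bb)$ in Drinfeld generators needed to see that the weight space one $\alpha_i$ below the top is spanned by the $x_{i,m}^-v$, $m\geq 1$) are exactly where the work is done in \cite{HJ}, so the proposal is correct modulo those acknowledged citations.
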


%\begin{defi}    \label{defE}
%We define $\mathcal{E}$ 
%as the additive group of maps $c : P_\Q \rightarrow \ZZ$ 
%whose support 
%$$\on{supp}(c) = \{\omega\in P_\Q,c(\omega) \neq 0\}$$ 
%is contained in 
%a finite union of sets of the form $D(\mu)$. 
%\end{defi}

Let $K_0(\mathcal{O})$ be the (completed) Grothendieck ring of the
category $\OO$ as in \cite[Definition 2.15]{FH4} (see also
\cite[Section 3.2]{HL}).

\subsection{Monomials and finite-dimensional modules}\label{fdrep}

Following \cite{Fre}, we define the ring of Laurent polynomials $\Yim =
\ZZ[Y_{i,a}^{\pm 1}]_{i\in I,a\in\CC^\times}$ in the indeterminates
$\{Y_{i,a}\}_{i\in I, a\in \C^\times}$, containing the multiplicative
group $\mathcal{M}$  of Laurent monomials in $\Yim$.

For $i\in I, a\in\CC^\times$, this group contains, in particular,
the simple root monomials $A_{i,a}\in\mathcal{M}$ defined by the formula
\begin{multline}    \label{Ai}
A_{i,a} := \\ Y_{i,aq_i^{-1}}Y_{i,aq_i}
\Bigl(\prod_{\{j\in I|C_{j,i} = -1\}}Y_{j,a}
\prod_{\{j\in I|C_{j,i} = -2\}}Y_{j,aq^{-1}}Y_{j,aq}
\prod_{\{j\in I|C_{j,i} =
-3\}}Y_{j,aq^{-2}}Y_{j,a}Y_{j,aq^2}\Bigr)^{-1}\,.
\end{multline}

We have an injective group homomorphism 
$\mathcal{M}\rightarrow \mfr$ by 
$$Y_{i,a}\mapsto [\omega_i]\Psib_{i,aq_i^{-1}}\Psib_{i,aq_i}^{-1}.$$
Note that $\varpi(Y_{i,a}) = \omega_i$ and $\varpi(A_{i,a}) = \alpha_i$.

For any $m\in\mathcal{M}$, we will denote by $L(m)$ the corresponding simple $U_q(\wh\bb)$-module, 
where we identify the monomial $m\in\mathcal{M}$ with its image in
$\mfr$.

Let $\mathcal{C}$ be the category of (type $1$) finite-dimen\-sional
representations of $U_q(\wh{\Glie})$. 
%For brevity, we will omit ``type
%1'' and refer to objects of the category ${\mc C}$ simply as
%finite-dimensional representations of $U_q(\wh{\Glie})$. 
 A monomial $m\in\mathcal{M}$ is said to be {\em
  dominant} if the powers of the variables $Y_{j,b}$, $j\in I$, 
$b\in\mathbb{C}^\times$, occurring in this monomial are all positive.
	We
  will denote by ${\mc M}^+$ the set of dominant monomials. 

Part (1) of the following theorem was proved in \cite{HJ} using the
result of part (2), which was established in \cite{CP}
following \cite{Dri2}.
 
\begin{thm}
(1) For $m \in \mathcal{M}$, the simple $U_q(\wh\bb)$-module $L(m)$ is
  finite-dimensional if and only if $m \in {\mc M}^+$. In this case, the
  action of $U_q(\wh\bb)$ on $L(m)$ can be uniquely extended to an
  action of $U_q(\wh{\Glie})$.

(2) Every simple module in the category $\mathcal{C}$ is of the form
  $L(m), m \in {\mc M}^+$.
\end{thm}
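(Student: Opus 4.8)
The plan is to deduce this classification theorem from the two ingredients already assembled in the excerpt. Part (2) is essentially quoted: the classification of simple finite-dimensional $U_q(\ghat)$-modules by Drinfeld polynomials (equivalently, by dominant monomials $m \in {\mc M}^+$) is due to \cite{Dri2,CP}, so the real work is to package it in the language of $U_q(\wh\bb)$-modules and to prove part (1), which is the statement from \cite{HJ}. So I will take (2) as given and concentrate on (1).

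For part (1), first I would establish the \emph{only if} direction together with the finite-dimensionality in the \emph{if} direction by a single structural argument. Given $m \in {\mc M}^+$, regard it as an $\ell$-weight $\Psib(m) \in \mfr$ via the homomorphism $\mathcal{M} \to \mfr$ of Section \ref{fdrep}; since each $(\Psib_{i,aq_i^{-1}}\Psib_{i,aq_i}^{-1})_i(z)$ is a ratio of polynomials whose numerator and denominator have the same degree, one checks that when $m$ is dominant the corresponding $\Psib(m)$ is actually a \emph{polynomial} $\ell$-weight: each $\Psi_i(z)$ is a polynomial in $z$ with nonzero constant term, of the form dictated by the classical Drinfeld polynomials. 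By part (2) (via \cite{CP}), to such a polynomial tuple there is associated a simple finite-dimensional $U_q(\ghat)$-module $V(m)$; I would then argue that $V(m)$, restricted to $U_q(\wh\bb)$, is a highest $\ell$-weight module with highest $\ell$-weight $\Psib(m)$. This uses the triangular decomposition and the fact that on a finite-dimensional module the series $\phi_i^+(z)$ act by rational (indeed, on the top $\ell$-weight space, polynomial) functions; the Drinfeld highest weight vector of $V(m)$ is killed by all $e_i$ ($i \in I$, and also $i=0$ by a weight/degree argument), so it is a highest $\ell$-weight vector for $U_q(\wh\bb)$. Hence $L(\Psib(m)) = L(m)$ is a quotient of $V(m)$, but $V(m)$ is simple, so $L(m) \cong V(m)$ is finite-dimensional, and its $U_q(\wh\bb)$-action extends to $U_q(\ghat)$; uniqueness of the extension follows because $U_q(\ghat)$ is generated by $U_q(\wh\bb)$ together with the $f_i$, and on a finite-dimensional module the $f_i$-action is determined by the $U_q(\wh\bb)$-action through the commutation relations $[e_i,f_i] = (k_i - k_i^{-1})/(q_i - q_i^{-1})$ and the $\mathfrak{sl}_2$ representation theory of each root subalgebra. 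Conversely, if $L(m)$ is finite-dimensional for some $m \in \mathcal{M}$, then restricting further to a suitable $U_{q_i}(\widehat{\mathfrak{sl}_2})$ (or even to the horizontal $U_q(\g)$) and using that a finite-dimensional highest weight module must have dominant ordinary highest weight forces every exponent of $Y_{j,b}$ in $m$ to be $\geq 0$, i.e. $m \in {\mc M}^+$; here one invokes Theorem \ref{class} to know $L(m) \in \OO$ and then the weight-space finiteness plus the $\mathfrak{sl}_2$-string argument.

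The step I expect to be the main obstacle is the precise identification $L(\Psib(m)) \cong V(m)$ as $U_q(\wh\bb)$-modules, i.e.\ checking that the Drinfeld-type simple module produced by \cite{CP} is simple \emph{as a Borel module} and has exactly the prescribed highest $\ell$-weight. The subtlety is that a priori restriction could fail to be irreducible, and one must rule out the existence of a proper $U_q(\wh\bb)$-submodule; this is handled by showing the cyclic $U_q(\wh\bb)$-module generated by the highest $\ell$-weight vector is already all of $V(m)$, which in turn rests on the fact (from the representation theory of quantum loop $\mathfrak{sl}_2$ and the PBW-type basis) that $e_0$-strings reach every weight space. I would cite \cite{HJ} for the packaging of this argument. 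A secondary, more routine point is verifying that $\Psib(m)$ really lies in $\mfr$ so that $L(m)$ is an object of $\OO$ and Theorem \ref{class} applies; this is immediate from the explicit formula for $A_{i,a}$ and the homomorphism $\mathcal{M} \to \mfr$.

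Finally, I would assemble the pieces: (2) is cited; for (1), the dominant-$\Rightarrow$-finite-dimensional direction with the unique extension comes from the identification with $V(m)$, and the converse from the $U_q(\g)$-restriction and $\mathfrak{sl}_2$-string argument. The overall logical shape is thus: dominant monomials $\leftrightarrow$ polynomial $\ell$-weights in $\mfr$ $\leftrightarrow$ (via \cite{CP}) Drinfeld polynomials $\leftrightarrow$ simple finite-dimensional $U_q(\ghat)$-modules, with the Borel-module statement being the bridge provided by \cite{HJ}.
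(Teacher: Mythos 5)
The paper does not actually prove this theorem: it is recalled as background, with part (2) cited from \cite{CP} (following \cite{Dri2}) and part (1) cited from \cite{HJ}, and the paper's own framing (``Part (1) \dots was proved in \cite{HJ} using the result of part (2)'') is exactly the logical shape of your proposal. Your reconstruction is consistent with that strategy and correctly isolates the real subtlety, namely that the Chari--Pressley simple module $V(m)$ remains cyclic (hence simple as a highest $\ell$-weight module) over $U_q(\wh\bb)$, so that $L(\Psib(m))\cong V(m)$. One slip worth fixing: the $\ell$-weight $\Psib(m)$ attached to a dominant monomial is \emph{not} a polynomial $\ell$-weight --- under $Y_{i,a}\mapsto[\omega_i]\Psib_{i,aq_i^{-1}}\Psib_{i,aq_i}^{-1}$ its components are ratios $q_i^{\deg P_i}P_i(zq_i^{-1})/P_i(zq_i)$ of polynomials of equal degree --- but this does not damage the argument, since what you actually need is only that these ratios are precisely the highest $\ell$-weights of the simple finite-dimensional modules parametrized by the Drinfeld polynomials $P_i$, which is the standard correspondence.
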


%According to \cite[Remark 3.11]{FH}, up to tensoring with
%one-dimensional representations, the modules described in this theorem
%are all simple finite-dimensional modules in the category
%$\mathcal{O}$.

%Note that if $\Psib$ is a monomial in the variables
%$$\wt{Y}_{i,a} = [-\omega_i]Y_{i,a},$$ 
%then $L(\Psib)$ is finite-dimensional. We will also use in the
%following notation:
%$$\wt{A}_{i,a} =
%\Psib_{i,aq_i^{-2}}\Psib_{i,aq_i^2}^{-1}\left(\prod_{j\sim i, r_j >
%    1}\Psib_{j,aq_j^{-1}}^{-1}\Psib_{j,aq_j}\right)
%\left(\prod_{j\sim i, r_j =
%    1}\Psib_{j,aq_i^{-1}}^{-1}\Psib_{j,aq_i}\right) =
%[-\alpha_i]A_{i,a}.$$

In particular, for $i\in I$, $a\in\CC^\times$, we have the fundamental representation $L(Y_{i,a})$ of
$U_q(\ghat)$. 

%The simple tensor product of a KR-module
%by a one-dimensional representation $[\omega]$, $\omega\in P$, will
%also be called a KR-module.

\subsection{The dual category $\mathcal{O}^*$}\label{dualcat}

For $V$ a Cartan-diagonalizable $U_q(\wh{\bo})$-module, 
we define a structure of $U_q(\wh{\bo})$-module on its
graded dual $V^* = \oplus_{\beta\in \tb^\times} V_\beta^*$ by
\begin{align*}
(x\,u)(v)=u\bigl(S^{-1}(x)v\bigr)\quad
(u\in  V^*, \ v\in V,\ x \in U_q(\mathfrak{b})).
\end{align*}

\begin{defi} Let $\mathcal{O}^*$ be the category of
  Cartan-diagonalizable $U_q(\bo)$-modules $V$ such that $V^*$ is in
  category $\mathcal{O}$.
\end{defi}

A $U_q(\wh{\mathfrak{b}})$-module $V$ is said to be of lowest $\ell$-weight 
$\Psib\in \tb^\times_\ell$ if there is $v\in V$ such that $V =U_q(\wh{\mathfrak{b}})v$ 
and the following hold:
\begin{align*}
U_q(\wh{\bo})^- v = \CC v\,,
\qquad 
\phi_{i,m}^+v=\Psi_{i, m}v\quad (i\in I,\ m\ge 0)\,.
\end{align*}
For $\Psib\in\tb^\times_\ell$, we have the simple $U_q(\wh{\bo})$-module
$L'(\Psib)$ of lowest $\ell$-weight $\Psi$.  The notions of
characters and $q$-characters of the objects of the category
$\mathcal{O}^*$ are defined in the same way as for the objects of the
category $\OO$.

\begin{prop}\label{dualweight}\cite{HJ} For $\Psib\in \tb^\times_\ell$ we
  have $(L'(\Psib))^* \simeq L(\Psib^{-1})$.
\end{prop}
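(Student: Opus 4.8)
The plan is to identify $V^* := (L'(\Psib))^*$ as a \emph{simple} highest $\ell$-weight $U_q(\wh\bo)$-module with highest $\ell$-weight $\Psib^{-1}$ and then to conclude via the uniqueness statement of Lemma \ref{simple}. Set $V := L'(\Psib)$ and fix a lowest $\ell$-weight vector $v \in V$; it spans the one-dimensional weight space $V_\mu$, where $\mu := \varpi(\Psib) \in \tb^\times$. Let $v^* \in V^*$ be the functional supported on $V_\mu$ with $v^*(v) = 1$. Two general facts will be used throughout: since $U_q(\wh\bo)$ is a Hopf subalgebra of $U_q(\ghat)$, the antipode $S$ restricts to a bijection of $U_q(\wh\bo)$ (its image contains the generators, e.g. $e_i = S(-e_i k_i^{-1})$ and $k_i = S(k_i^{-1})$), so $S^{-1}$ preserves $U_q(\wh\bo)$; and $V$, as an object of $\OO^*$, has finite-dimensional weight spaces, so the pairing $V^* \times V \to \CC$ induces a perfect pairing $(V^*)_{\beta^{-1}} \times V_\beta \to \CC$ for every $\beta \in \tb^\times$.

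First I would check that $v^*$ behaves like a highest $\ell$-weight vector. From $(x\cdot u)(w) = u(S^{-1}(x)w)$ and $S^{-1}(k_i) = k_i^{-1}$ we get $k_i\cdot v^* = \mu(i)^{-1}v^*$, so $v^*$ has weight $\mu^{-1} = \varpi(\Psib^{-1})$. Since $S^{-1}(e_i)$ is $e_i$ times a Cartan element, $S^{-1}(e_i)w$ has weight $\beta\cdot\overline{\alpha_i}$ whenever $w$ has weight $\beta$; as $v^*$ vanishes off $V_\mu$, the scalar $(e_i\cdot v^*)(w) = v^*(S^{-1}(e_i)w)$ can be nonzero only if $\beta\cdot\overline{\alpha_i} = \mu$, i.e. $\beta = \mu\cdot\overline{\alpha_i}^{-1}$ lies strictly below the lowest weight $\mu$ of $V$ --- which is impossible. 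Hence $e_i\cdot v^* = 0$ for all $i\in I$.

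Next, $V^*$ is simple. For a nonzero submodule $M\subseteq V^*$, its annihilator $M^\perp := \{w\in V : u(w)=0\ \forall u\in M\}$ is a $U_q(\wh\bo)$-submodule of $V$, because for $x\in U_q(\wh\bo)$, $u\in M$ and $w\in M^\perp$ one has $u(xw) = (S(x)\cdot u)(w) = 0$ (here $S(x)\in U_q(\wh\bo)$, so $S(x)\cdot u\in M$). Perfectness of the pairing on weight spaces forces $M^\perp\neq V$ (since $M\neq 0$), hence $M^\perp = 0$ by simplicity of $V$, and then $M = V^*$ (perfectness again). Combined with the previous step, $U_q(\wh\bo)v^*$ is a nonzero submodule of $V^*$, hence equals $V^*$; thus $V^*$ is a simple highest $\ell$-weight module with highest $\ell$-weight vector $v^*$.

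It remains to identify the highest $\ell$-weight of $V^*$, i.e. the eigenvalues of $\phi_{i,m}^+$ on $v^*$, and this is the step I expect to be the main obstacle. Since $\phi_{i,m}^+$ has weight zero, so does $S^{-1}(\phi_{i,m}^+)$; therefore $S^{-1}(\phi_{i,m}^+)$ maps $V_\mu = \CC v$ into itself and $(\phi_{i,m}^+\cdot v^*)(v) = v^*\bigl(S^{-1}(\phi_{i,m}^+)v\bigr)$ carries all the information, so it suffices to evaluate the series $S^{-1}(\phi_i^+(z))$ on $v$. Here I would use the coproduct of the Drinfeld--Cartan series, which has the shape $\Delta(\phi_i^+(z)) = \phi_i^+(z)\otimes\phi_i^+(z)$ plus correction terms whose two tensor legs --- being built from the Drinfeld root vectors $x_{j,r}^\pm$ --- carry nontrivial opposite weights. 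Feeding this into the suitable form of the antipode axiom, $\sum_{(x)}S^{-1}(x_{(2)})x_{(1)} = \epsilon(x) = \sum_{(x)}x_{(2)}S^{-1}(x_{(1)})$, and evaluating on $v$, every correction term vanishes: after $S^{-1}$ is applied to the appropriate leg, one acts on $v$ by a strictly weight-lowering operator, which annihilates the lowest $\ell$-weight vector. What survives is $\phi_i^+(z)\,S^{-1}(\phi_i^+(z))\,v = v$; since $\phi_i^+(z)v = \Psi_i(z)v$ with $\Psi_i(z)$ an invertible scalar series and $S^{-1}(\phi_i^+(z))v$ is a scalar multiple of $v$, this gives $S^{-1}(\phi_i^+(z))v = \Psi_i(z)^{-1}v$, and pairing with $v^*$ yields $\phi_i^+(z)\cdot v^* = \Psi_i(z)^{-1}v^*$. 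Hence the highest $\ell$-weight of $V^*$ is $\Psib^{-1}$, and with simplicity of $V^*$ in hand, Lemma \ref{simple} gives $(L'(\Psib))^*\simeq L(\Psib^{-1})$. The real difficulty is locating $\Delta(\phi_i^+(z)) - \phi_i^+(z)\otimes\phi_i^+(z)$ precisely enough inside the triangular decomposition of $U_q(\wh\bo)\otimes U_q(\wh\bo)$ to run this argument; one could instead verify the claim directly for the prefundamental $\ell$-weights $\Psib_{i,a}^{\pm1}$ and the one-dimensional $\ell$-weights $\Psib_\lambda$, but since a general $L'(\Psib)$ is not a subquotient of tensor products of such modules, an input of this kind about the antipode on the Drinfeld--Cartan generators seems unavoidable.
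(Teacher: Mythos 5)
First, a note on the comparison itself: the paper does not prove this proposition --- it is imported from \cite{HJ} --- so there is no in-paper argument to measure yours against, and what follows assesses your proposal on its own terms. Your architecture is the standard (and, I believe, essentially the one in \cite{HJ}) route: dualize, check that the functional $v^*$ dual to the lowest $\ell$-weight vector is a weight vector of weight $\mu^{-1}$ killed by the $e_i$, $i\in I$ (your weight bookkeeping here is correct, since $\mu\cdot\overline{\alpha_i}^{-1}$ is not a weight of a lowest $\ell$-weight module), prove simplicity of $V^*$ by the annihilator argument and the pairing on weight spaces, compute the highest $\ell$-weight, and invoke the uniqueness in Lemma \ref{simple}. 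One small caveat: the finite-dimensionality of the weight spaces of $L'(\Psib)$ for arbitrary $\Psib\in\tb^\times_\ell$, which your perfect-pairing argument needs, is itself an input from \cite{HJ} rather than a consequence of membership in $\OO^*$ (which is not automatic for general $\Psib$).

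The genuine gap is exactly where you place it: the step $\phi_i^+(z)\cdot v^* = \Psi_i(z)^{-1}v^*$. The antipode-axiom trick $\sum S^{-1}(x_{(2)})x_{(1)}=\epsilon(x)1$ does yield $S^{-1}(\phi_i^+(z))v=\Psi_i(z)^{-1}v$, but only once one knows that the correction terms in $\Delta(\phi_i^+(z))-\phi_i^+(z)\otimes\phi_i^+(z)$ can be written \emph{inside} $U_q(\wh\bo)\otimes U_q(\wh\bo)$ with first tensor leg in the left ideal generated by the lowering generators that actually lie in the Borel (the $x_{j,r}^-$ with $r>0$), so that this leg annihilates the lowest $\ell$-weight vector. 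Your sentence ``every correction term vanishes \ldots\ one acts on $v$ by a strictly weight-lowering operator'' asserts precisely this without establishing it: a weight-zero correction term could a priori have first leg of weight $+\gamma$ and second leg of weight $-\gamma$, in which case nothing vanishes on $v$; and even granting a first leg of negative weight, one must check that it factors through lowering operators belonging to $U_q(\wh\bo)$ (the $x^-_{j,r}$ with $r\le 0$ do not act on $L'(\Psib)$ at all). The needed structural statement about $\Delta(\phi_i^+(z))$ is a known result (Damiani's coproduct estimates, used in \cite{Fre} and \cite{HJ}), so the gap is fillable by citation --- but as written, the decisive step of your argument is an assertion rather than a proof, as you yourself acknowledge.
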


In Section \ref{trans} we will discuss the prefundamental
  representations $R_{i,a}^\pm$ in
$\mathcal{O}^*$ defined by the property $(R_{i,a}^\pm)^* \simeq
L_{i,a}^\mp$.

\begin{rem}    \label{tau}
If $V$ is any finite-dimensional $U_q(\ghat)$-module, then
its double dual
$(V^*)^*$ is isomorphic to $V$ up to a twist by $\tau_{q^{-2r^\vee
    h^\vee}}$: $\tau_{q^{-2r^\vee
    h^\vee}}(V^*)^* \simeq V$. But for a general (simple) module $V$
in $\mathcal{O}^*$ or
  $\mathcal{O}$, it is not clear how to express $(V^*)^*$ in terms of
  $V$. This is closely related to the fact that the left and right
dual modules are not isomorphic to each other (not even up to a twist
by $\tau_{q^{-2r^\vee h^\vee}}$ in general).
\end{rem}

\subsection{$q$-characters}    \label{qchar}

For any module $V$ in the category $\mathcal{O}$, we define the character of
$V$ to be the following element of the group algebra of $P_\Q$ generated by the $[\omega] = \delta_{\omega,.}$ for $\omega\in P_\Q$. 

\begin{align}
\chi(V) = \sum_{\omega\in\tb^\times} 
\on{dim}(V_\omega) [\omega]\,.
\label{ch}
\end{align}

We recall the notion of $q$-character of a finite-dimensional
$U_q(\ghat)$-module.

For any $U_q(\wh\bb)$-module $V$ and
$\Psib\in\tb_\ell^\times$, the $\ell$-weight subspace of $V$ of $\ell$-weight $\Psib$ is
\begin{align}
V_{\Psibs} =
\{v\in V\mid
\exists p\geq 0, \forall i\in I, 
\forall m\geq 0,  
(\phi_{i,m}^+ - \Psi_{i,m})^pv = 0\}.
\label{l-wtsp} 
\end{align}

%\begin{thm}\cite{HJ} For $V$ in category $\mathcal{O}$, $V_{\Psib}\neq
%  0$ implies $\Psib\in\mfr$.
%\end{thm}

 %Recall that $\mathcal{M}$ is the multiplicative group of
  %monomials of $\Yim$ and the injective group homomorphism ${\mc M}
  %\to \mfr$ from Definition \ref{defY}, as well as the injective ring
  %homomorphism $\Yim \to {\mc E}_\ell$. 
	
	We will identify ${\mc M}$ and
  $\Yim$ with their images in $\mfr$ and ${\mc E}_\ell$, respectively,
  under these homomorphisms.

It is proved in \cite{Fre} that for any finite-dimensional
$U_q(\wh{\Glie})$-module $V$, all $\ell$-weights appearing in $V$
are in the image of ${\mc M}$ in $\mfr$. Hence, one can define the $q$-character of $V$ 
\begin{align}
\chi_q(V) = 
\sum_{\Psibs\in\mfr}  
\mathrm{dim}(V_{\Psibs}) [\Psib]
\label{qch}
\end{align}
as an element of $\Yim = \mathbb{Z}[Y_{i,a}^{\pm 1}]_{i\in I, a\in\mathbb{C}^\times}$.

\begin{thm}\cite{Fre, Fre2}    \label{chiqprop}

(1) $\chi_q$ defines an injective ring morphism $K_0(\mathcal{C})\rightarrow \Yim$.

(3) For any simple finite-dimensional $U_q(\wh{\Glie})$-module $L(m)$, we have
$$ \chi_q(L(m))\in m \ZZ[A_{i,a}^{-1}]_{i\in I, a\in\CC^\times}.$$
\end{thm}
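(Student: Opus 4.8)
The plan is to treat the two assertions separately: (1) is a formal consequence of the behaviour of the Drinfeld--Cartan currents on tensor products, while (3) is the substantive statement and is proved by reduction to rank one. For (1), I would first establish multiplicativity, $\chi_q(V\otimes W)=\chi_q(V)\chi_q(W)$ for $V,W$ in $\mathcal{C}$. The key input is the coproduct of the Drinfeld--Cartan series: one checks from the relations (in the loop realization) that $\Delta(\phi_i^+(z))$ equals $\phi_i^+(z)\otimes\phi_i^+(z)$ modulo a sum of terms each of which strictly lowers the $\g$-weight of the first tensor factor. Hence, with respect to the $\g$-weight filtration of $V$, the commuting family $\{\phi_{i,m}^+\}$ acts on $V\otimes W$ triangularly with associated-graded operators $\phi_{i,m}^+\otimes\phi_{i,m}^+$; since the joint generalized eigenvalues (the $\ell$-weights, with multiplicities) of a triangular commuting family coincide with those of its diagonal part, and those of $\{\phi_{i,m}^+\otimes\phi_{i,m}^+\}$ are the products of the $\ell$-weights of $V$ and of $W$ with multiplicities multiplying, we obtain exactly $\chi_q(V\otimes W)=\chi_q(V)\chi_q(W)$. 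As $\chi_q$ is additive on exact sequences, it descends to a ring homomorphism $K_0(\mathcal{C})\to\Yim$. Injectivity is then immediate from (3): it gives $\chi_q(L(m))\in m\,\ZZ[A_{i,a}^{-1}]$, so with respect to the partial order on monomials in which $m'\preceq m$ means that $m(m')^{-1}$ is a product of the $A_{i,a}$ with non-negative exponents (a genuine partial order, since $\varpi$ sends a non-trivial such product into $Q^+\setminus\{0\}$), the monomial $m$ is the unique $\preceq$-maximal monomial occurring in $\chi_q(L(m))$. Since the classes $[L(m)]$, $m\in\mathcal{M}^+$, form a $\ZZ$-basis of $K_0(\mathcal{C})$ by the classification of simple modules recalled above, the family $\{\chi_q(L(m))\}$ is unitriangular over the monomials of $\Yim$, hence $\ZZ$-linearly independent, so $\chi_q$ is injective.

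For (3), I would reduce to $U_{q_i}(\su)$ for each node $i\in I$. The subalgebra of $U_q(\ghat)$ generated by $x_{i,r}^\pm$, $k_i^{\pm1}$ and the $h_{i,r}$ is a copy of $U_{q_i}(\su)$, and on $q$-characters the restriction functor is implemented by the ring homomorphism $\beta_i\colon\Yim\to\ZZ[Y_b^{\pm1}]_b$ with $Y_{i,a}\mapsto Y_a$ (suitably reparametrized) and $Y_{j,a}\mapsto 1$ for $j\ne i$; in particular $\beta_i(A_{i,a})$ is the $U_{q_i}(\su)$-root monomial. Next I would use that for $U_q(\su)$ the $q$-characters of all simple finite-dimensional modules are explicitly known as products of $q$-strings $Y_a+Y_aA_{aq}^{-1}+\cdots$, so every simple $\su$-module satisfies the highest-monomial property. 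The combinatorial core is then an induction (the Frenkel--Mukhin argument): writing $\chi_q(L(m))=\sum_{m'}\mu(m')m'$, one shows that a monomial $m'$ with $\mu(m')\ne0$ that is $\preceq$-maximal among all monomials occurring must be dominant --- otherwise, if some $Y_{i,a}$ occurs in $m'$ with negative exponent, then $\beta_i(m')$ is a non-dominant monomial of the $i$-th copy of $\su$ inside $\on{Res}_i L(m)$, hence lies strictly below the highest monomial of its $\su$-constituent, and lifting this one step produces a monomial of $\chi_q(L(m))$ strictly $\succ m'$, a contradiction --- and a dominant monomial of $\chi_q(L(m))$ must equal $m$, using that $m$ is its unique dominant monomial. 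Applying the same device node by node, always moving up in the $\su$-string at a node where the current monomial fails to be $i$-dominant, shows inductively that every monomial of $\chi_q(L(m))$ has the form $m\prod_k A_{i_k,a_k}^{-1}$.

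The step I expect to be the main obstacle is the one where the rank-one reduction must be pushed through honestly: relating monomials of $\chi_q(L(m))$ to the decomposition of $\on{Res}_i L(m)$ into $U_{q_i}(\su)$-constituents, and in particular proving that $m$ is the \emph{unique} dominant monomial of $\chi_q(L(m))$, which is what closes the induction. The difficulty is that $\beta_i$ is very far from injective on monomials, so a relation between $\beta_i$-images in the $\su$-picture cannot simply be lifted to a relation between honest monomials of $\chi_q(L(m))$; one must instead use positivity of all the multiplicities together with a careful bookkeeping of which root monomials $A_{j,a}$ can occur at a given step in order to produce the required lift. This is precisely the technical heart of \cite{Fre2}; by contrast, the reduction maps $\beta_i$ and the rank-one computation are routine, and part (1) is purely formal once (3) is available.
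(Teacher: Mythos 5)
First, a remark on the comparison itself: the paper gives no proof of Theorem \ref{chiqprop} --- it is recalled as background with citations to \cite{Fre} and \cite{Fre2} --- so your proposal can only be measured against the original arguments there. Your treatment of part (1) is essentially the argument of \cite{Fre}: multiplicativity via the triangularity of $\Delta(\phi_i^+(z))$ with respect to the weight filtration is correct, and your derivation of injectivity from unitriangularity of $\{\chi_q(L(m))\}_{m\in\mathcal{M}^+}$ is valid (though \cite{Fre} gets injectivity more cheaply, from the coarser triangularity with respect to the ordinary weight order, without needing part (3)).

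Part (3) contains a genuine gap, and it sits exactly where you placed your bet. Your induction closes by asserting that ``a dominant monomial of $\chi_q(L(m))$ must equal $m$, using that $m$ is its unique dominant monomial.'' That statement is false for general simple finite-dimensional modules: uniqueness of the dominant monomial is the property of being \emph{special}, which fails for many simple modules (this failure is precisely why the Frenkel--Mukhin algorithm is not valid in general), and even for the classes of modules where it does hold (Kirillov--Reshetikhin and fundamental modules) it is a later and harder theorem whose known proofs \emph{use} the highest monomial property --- so invoking it here would be circular in any case. The way \cite{Fre2} actually closes the argument is different: they exploit the highest $\ell$-weight structure of $L(m)$ directly, using the commutation relations between the Drinfeld currents $\phi_j^\pm(z)$ and $x_i^-(w)$ (Lemma 3 of \cite{Fre}) together with the rank-one reduction to show that applying lowering operators to a (generalized) $\ell$-weight vector only produces $\ell$-weights of the form $\mu A_{i,a}^{-1}$ with $a\in\C^\times$; since every vector of $L(m)$ is reached from the highest $\ell$-weight vector by lowering operators, an induction on the ordinary weight then yields $\chi_q(L(m))\in m\,\ZZ[A_{i,a}^{-1}]$ without any appeal to dominant monomials beyond the highest one. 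Your first reduction step (a $\preceq$-maximal monomial that is not $i$-dominant can be raised inside an $\widehat{sl}_2$-string, via the decomposition recalled in \eqref{decompk}) is sound; it is only the dominant case of the dichotomy that your proposal cannot handle, and the fix is not a uniqueness statement about dominant monomials but the lowering-operator lemma above.
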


Let $\varphi:
  \mathcal{M}\rightarrow P$ be the restriction of the homomorphism
  $\varpi$ to ${\mc M}$. 
	%Explicitly, $\varphi$ sends a monomial $m =
  %\prod_{i\in I,
  %  a\in\CC^\times}Y_{i,a}^{u_{i,a}(m)}$ in $\mathcal{M}$ to the
  %corresponding $\g$-weight
%$$\varphi(m) := \sum_{i\in I, a\in\CC^\times} u_{i,a}(m) \omega_i\in
%  P.$$
  For example, for $i\in I, a\in\CC^\times$, we have
  $\varphi(Y_{i,a}) = \omega_i$ and $\varphi(A_{i,a}) = \alpha_i$. If $m \in {\mc M}^+$, then $\varphi(m) \in P^+$.

 For any
finite-dimensional $U_q(\wh{\mathfrak{g}})$-module $V$, the ordinary
character of its restriction to $U_q(\mathfrak{g})$ is equal to
$\varphi(\chi_q(V))$.

\section{Chari's braid group action and its extension}\label{braida}

In this section, we recall the action of the braid group on $\Yim$
introduced by Chari in \cite{C} and its extension to $\Yim'$ which we
introduced in \cite[Section 3.2]{FH4}.

\subsection{Extremal monomials}\label{secextm}

 Consider the simple finite-dimensional
 $U_q(\wh{\mathfrak{g}})$-module $L(m), m \in {\mc M}^+$, and set
 $\la=\varphi(m) \in P^+$. The ordinary
 character of the restriction of $L(m)$ to $U_q(\g)$ is invariant
 under the action of the Weyl group $W$. For any $w\in W$, the dimension of the weight subspace
 of $L(m)$ of weight $w(\omega)$ is $1$, and it is spanned by an
 extremal weight vector (in the sense of \cite{kas}). We will denote this vector by
 $v_w$. (In particular, if $w=e$, the identity element of the Weyl group
 $W$, the vector $v_e$ is the highest weight vector of $L(m)$.) Hence $v_w$ is
 an $\ell$-weight vector.

 V. Chari introduced in \cite{C} a braid group action on
  $\Yim = \mathbb{Z}[Y_{i,a}^{\pm 1}]_{i\in I,
   a\in\mathbb{C}^\times}$. Namely, she defined operators $T_i, i \in
 I$, on $\Yim$ by formulas
\begin{equation}    \label{TY1}
  T_i(Y_{i,a}) = Y_{i,a}A_{i,aq_i}^{-1}\text{ and }T_i(Y_{j,a}) =
  Y_{j,a} \text{ if $j\neq i$}
\end{equation}
for all $i,j\in I $ and
$a\in\mathbb{C}^\times$. It was established in \cite{C} (a closely related result
  was proved earlier in \cite{BP}) that the $T_i$'s satisfy the
relations of the braid group corresponding to $\g$. 
  Therefore for any $w\in W$, we have a well-defined operator $T_w$
  acting on $\Yim$ obtained by considering a reduced decomposition $w = s_{i_1} \ldots s_{i_k}$ and setting
  $$
  T_w := T_{i_1} T_{i_2} \ldots T_{i_k}.$$

\begin{thm}\cite{C, CM}\label{occurlm} For each $w \in W$, the
  monomial corresponding to the $\ell$-weight of $v_w \in
  L(m)$ is $T_w(m)$. In particular, the multiplicity of
    $T_w(m)$ in $\chi_q(L(m))$ is equal to 1.
\end{thm}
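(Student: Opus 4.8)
The plan is to prove this statement by induction on the length $l(w)$, using the braid relations for the $T_i$ together with the known $\ell$-weight structure of finite-dimensional $U_q(\ghat)$-modules. For the base case $w = e$ the assertion is trivial: $v_e$ is the highest $\ell$-weight vector of $L(m)$, whose $\ell$-weight corresponds to $m = T_e(m)$, and this monomial has multiplicity one in $\chi_q(L(m))$ by Theorem \ref{chiqprop}(3) together with the fact that $\varphi(m) = \lambda$ is the highest weight (so no monomial $m \prod_k A_{i_k,a_k}^{-1}$ with $\varphi = \lambda$ other than $m$ itself can occur). For the inductive step, write $w = s_i w'$ with $l(w) = l(w') + 1$, so that $w^{-1}(\alpha_i) \in \Delta_+$ by Lemma \ref{wn}(1) (applied appropriately); equivalently one can write $w = w' s_j$. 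I would prefer the decomposition $w = s_i w'$ and argue about how $T_i$ transforms the $\ell$-weight of $v_{w'}$.

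The key geometric input is the $\sw_2$-theory of extremal weight vectors: for a simple reflection $s_i$, the weight vectors $v_{w'}$ and $v_{s_i w'} = v_w$ of $L(m)$ lie in the same $U_{q_i}(\sw_2)$-string, i.e.\ in an irreducible $U_{q_i}(\widehat{\sw}_2)$-subquotient obtained by restriction to the $i$-th Drinfeld $\sw_2$, and $v_w$ is obtained from $v_{w'}$ by applying (a divided power of) $f_i$ (or $e_i$) an appropriate number of times. First I would recall, following Chari \cite{C} and \cite{CM}, the description of how the $\ell$-weight changes along such a string: if $m'$ is the monomial of $v_{w'}$, then after restriction to the $i$-th $\sw_2$, the $Y_{i,a}$-content of $m'$ determines a collection of Drinfeld polynomials/roots, and passing to the opposite end of the $\sw_2$-string replaces the $Y_{i,a}$ factors according to exactly the rule that defines $T_i$ on monomials — namely $Y_{i,a} \mapsto Y_{i,a} A_{i,aq_i}^{-1}$ for the relevant spectral parameters, while the $Y_{j,b}$ with $j \neq i$ are untouched (here one uses that $A_{i,c}$ for $c$ in the string only involves $Y_{i,\cdot}$ and $Y_{j,\cdot}$ with $C_{j,i} < 0$, but the net effect on the $\ell$-weight of moving within an $i$-string of $\overline{L(m)}$ is governed by the $i$-component alone). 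Combining this with the inductive hypothesis that the monomial of $v_{w'}$ is $T_{w'}(m)$ gives that the monomial of $v_w$ is $T_i(T_{w'}(m)) = T_w(m)$; well-definedness of the composite (independence of reduced decomposition) is exactly the braid relation for the $T_i$ proved in \cite{C}.

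For the multiplicity-one assertion, I would argue as follows. The weight $\varphi(T_w(m)) = w(\lambda)$ is an extremal weight of the $\g$-module $\overline{L(m)}$, hence its weight multiplicity is $1$ by $W$-invariance of the ordinary character; since $\varphi$ sends the $\ell$-weight multiplicity decomposition to the weight decomposition, there can be at most one $\ell$-weight (equivalently at most one monomial) of $L(m)$ in the fiber $\varphi^{-1}(w(\lambda))$, and it must have multiplicity exactly $1$. As $v_w$ realizes one such $\ell$-weight, namely $T_w(m)$, we conclude that $T_w(m)$ occurs in $\chi_q(L(m))$ with multiplicity $1$. The main obstacle, I expect, is the careful bookkeeping in the inductive step: verifying that restricting to the $i$-th Drinfeld $\sw_2$ and moving across an $\sw_2$-string really implements the substitution $Y_{i,a} \mapsto Y_{i,a}A_{i,aq_i}^{-1}$ at the level of $\ell$-weights (and does nothing to the $j \neq i$ variables), uniformly in the rank and in the lacing, rather than merely at the level of ordinary weights. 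This is where one must invoke Chari's and Chari--Moura's analysis \cite{C, CM} of $\ell$-weights of extremal weight spaces; granting that analysis, the rest of the argument is the routine induction sketched above.
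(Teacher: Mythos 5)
The paper gives no proof of this theorem---it is quoted from \cite{C, CM}---and your sketch is essentially a reconstruction of Chari's original argument there: induction on $l(w)$ via restriction to the $i$-th Drinfeld $\sw_2$, with $v_w$ sitting at the far end of the $i$-string through $v_{w'}$, the $\ell$-weight changing by a product of $A_{i,c}^{-1}$'s whose spectral parameters are fixed by the $\sw_2$ theory, and the multiplicity-one claim following correctly from the one-dimensionality of extremal weight spaces of $\ol{L(m)}$. The one substantive point you defer to the citations---that $T_{w'}(m)$ is $i$-dominant when $l(s_i w') = l(w')+1$ so that the highest-$\ell$-weight $\sw_2$ analysis applies, and that the resulting substitution on the $Y_{i,\cdot}$-content is exactly $Y_{i,a}\mapsto Y_{i,a}A_{i,aq_i}^{-1}$---is precisely where the real work in \cite{C, CM} lies, so the deferral is appropriate for a result the paper itself only cites.
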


 We call $T_w(m)$ the {\em extremal monomial} of
  $L(m)$ corresponding to $w$.

\medskip

We showed in \cite[Section 2.6]{FH4} that the following notation is
well-defined (that is, depends only on $w(\omega_i) \in P$, and not
separately on $w$ and $\omega_i$):
$$Y_{w(\omega_i),a} := T_w(Y_{i,a}).$$
For example, $Y_{\omega_i,a} = Y_{i,a}$, $Y_{s_i(\omega_i),a} = Y_{i,a} A_{i,aq_i}^{-1}$.

\subsection{Extension of Chari's action}    \label{genchari}

In \cite{FH4}, we extended Chari's braid group action to the following
extension $\mathcal{Y}'$ of the ring $\Yim$:
\begin{equation}    \label{Yprime}
\mathcal{Y}' :=
\mathbb{Z}[\Psib_{i,a}^{\pm 1}]_{i\in I,a\in\mathbb{C}^\times} \otimes_{\Z} \Z(P)
= \mathbb{Z}[\Psib_{i,a}^{\pm 1}, y_j^{\pm 1}]_{i\in I,a\in\mathbb{C}^\times;j \in I}
\supset \mathcal{Y},
\end{equation}
     where
      \begin{equation}    \label{YPsi}
        Y_{i,a} = [\omega_i]\Psib_{i,aq_i^{-1}}\Psib_{i,aq_i}^{-1}.
      \end{equation}
      
Recall the elements $\wt{\Psib}_{i,a} \in \Yim', i \in I$,
which we introduced in \cite{FH2}:
  \begin{multline}\label{psfh2}\wt{\Psib}_{i,a} :=
    \\ \Psib_{i,a}^{-1} \left(\prod_{j\in I,C_{i,j} =
      -1}\Psib_{j,aq_i}\right) \left(\prod_{j\in I, C_{i,j} =
  -2}\Psib_{j,a}\Psib_{j,aq^2}\right)\left(\prod_{j\in I, C_{i,j} =
      -3}\Psib_{j,aq^{-1}}\Psib_{j,aq}\Psib_{j,aq^3}\right).
  \end{multline}

Define a ring automorphism $\sigma$ of $\Yim'$ by the formula
\begin{equation}    \label{sigm}
\sigma(\Psib_{i,a}) := \Psib_{i,a^{-1}}^{-1}, \qquad i\in I, \quad
a\in\mathbb{C}^\times; \qquad \sigma([\omega]) = [\omega], \quad
\omega \in P.
\end{equation}
Define a ring automorphism $T'_i: \Yim' \to \Yim', i \in I$, by the
formulas
\begin{equation}    \label{TiPj}
T'_i(\Psib_{i,a}) = \sigma(\wt{\Psib}_{i,a^{-1}q_i^{-2}}^{-1}),
 \end{equation}
\begin{equation}
T'_i(\Psib_{j,a}) = \Psib_{j,a}, \quad j \neq i; \qquad
T'_i[\omega] = [s_i(\omega)] , \quad \omega \in P.
\end{equation}

\begin{thm}\cite{FH4}
(1)  The operators $T'_i, i \in I$, generate an action of the braid group
  associated to $\g$ on $\Yim'$.

(2) The restriction of $T'_i$ to $\Yim \subset
\Yim'$ is equal to the Chari operator $T_i$.
\end{thm}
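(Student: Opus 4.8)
The plan is to prove part (1) by decomposing $\Yim'$ into a tensor product on which the braid relations can be checked factor by factor, and then to obtain part (2) by a direct substitution on the generators $Y_{j,a}$.

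For part (1), the key observation is that each $T'_i$ respects the factorization $\Yim' = \Z[\Psib_{j,a}^{\pm 1}]_{j\in I,a\in\C^\times}\otimes_\Z\Z[P]$ of \eqref{Yprime}. Indeed, by \eqref{psfh2} and \eqref{sigm} the element $\sigma(\wt{\Psib}_{i,a^{-1}q_i^{-2}}^{-1})$ appearing in \eqref{TiPj} is a Laurent monomial in the variables $\Psib_{j,b}$ alone (the $\wt{\Psib}_{i,b}$ involve only the $\Psib_{j,b}$, and $\sigma$ preserves this subring), while $T'_i(\Psib_{j,a}) = \Psib_{j,a}$ for $j\neq i$ and $T'_i[\omega] = [s_i(\omega)]$. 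Hence $T'_i = \bar{T}'_i\otimes s_i$, where $\bar{T}'_i$ is the ring automorphism of $\Z[\Psib_{j,a}^{\pm 1}]_{j\in I,a\in\C^\times}$ with $\bar{T}'_i(\Psib_{i,a}) = \sigma(\wt{\Psib}_{i,a^{-1}q_i^{-2}}^{-1})$ and $\bar{T}'_i(\Psib_{j,a}) = \Psib_{j,a}$ for $j\neq i$, and $s_i$ acts on $\Z[P]$ by $[\omega]\mapsto [s_i(\omega)]$. (This factorization also makes transparent that each $T'_i$ is invertible.) Consequently the braid relations for the $T'_i$ are equivalent to the braid relations for the $s_i$ on $\Z[P]$ together with those for the $\bar{T}'_i$ on $\Z[\Psib_{j,a}^{\pm1}]_{j\in I,a\in\C^\times}$. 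The former hold because the simple reflections generate $W$ with the braid relations, hence so do the ring automorphisms they induce on the group ring $\Z[P]$.

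It remains to check the braid relations for the $\bar{T}'_i$, and since these are ring automorphisms it suffices to do so on the generators $\Psib_{l,a}$. Fix $i\neq j$. A direct expansion of \eqref{TiPj} via \eqref{psfh2} and \eqref{sigm} shows that $\bar{T}'_i(\Psib_{i,a})$ is a Laurent monomial in the variables $\Psib_{k,b}$ with $k\in\{i\}\cup\{k: C_{i,k}<0\}$, the spectral shifts being governed by the Cartan integers and by $q$, whereas $\bar{T}'_i$ fixes $\Psib_{k,b}$ for all $k\neq i$. Therefore, in any word in $\bar{T}'_i$ and $\bar{T}'_j$, every variable $\Psib_{k,b}$ with $k\notin\{i,j\}$ is fixed and so contributes identically to the two sides of the braid relation between $i$ and $j$; what must be compared depends only on the $2\times 2$ block of the Cartan matrix indexed by $\{i,j\}$. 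This reduces the verification to the rank-two root systems $A_1\times A_1$, $A_2$, $B_2 = C_2$ and $G_2$: in the $A_1\times A_1$ case $\bar{T}'_i$ and $\bar{T}'_j$ act on disjoint sets of variables (apart from variables fixed by both) and hence commute, and in the other three cases one composes the explicit monomial formulas for $\bar{T}'_i$ and $\bar{T}'_j$ three, four and six times, respectively, and checks that the two resulting monomials agree. The hard part will be exactly this last rank-two check in type $G_2$: although the reduction to rank two is clean, the six-fold composition (with one simple root whose $\bar{T}'$-image is a monomial in three spectrally shifted copies of the other variable) is a delicate bookkeeping exercise, and it is there that one confirms that the particular spectral shifts built into \eqref{TiPj} are precisely the ones for which the $G_2$ braid relation closes up.

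For part (2) one substitutes \eqref{YPsi} into the definition of $T'_i$. For $j\neq i$ we get $T'_i(Y_{j,a}) = [s_i(\omega_j)]\Psib_{j,aq_j^{-1}}\Psib_{j,aq_j}^{-1} = [\omega_j]\Psib_{j,aq_j^{-1}}\Psib_{j,aq_j}^{-1} = Y_{j,a}$, using $s_i(\omega_j) = \omega_j$; this agrees with Chari's formula \eqref{TY1}. For $j = i$ we have $T'_i(Y_{i,a}) = [s_i(\omega_i)]\,T'_i(\Psib_{i,aq_i^{-1}})\,T'_i(\Psib_{i,aq_i})^{-1}$, and expanding the right-hand side with \eqref{TiPj}, \eqref{psfh2}, \eqref{sigm}, then comparing with $s_i(\omega_i) = \omega_i - \al_i$ and with the definition \eqref{Ai} of $A_{i,a}$, one obtains $Y_{i,a}A_{i,aq_i}^{-1} = T_i(Y_{i,a})$. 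Since the $Y_{j,a}^{\pm1}$ generate $\Yim$, this shows in particular that $\Yim$ is $T'_i$-stable and that $T'_i$ restricts to the Chari operator $T_i$.
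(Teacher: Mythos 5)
This theorem is imported: the paper states it with a citation to \cite{FH4} and gives no proof of its own, so there is nothing internal to compare your argument against; I can only assess it on its merits. Your part (2) is correct and complete in outline: the substitution of \eqref{YPsi} into \eqref{TiPj}, \eqref{psfh2}, \eqref{sigm} does reproduce Chari's formulas \eqref{TY1} (I checked the non-simply-laced case $C_{2,1}=-2$ in type $B_2$ as well, where the weight bookkeeping $[s_i(\omega_i)]=[\omega_i-\al_i]$ and the shifts by $q_i$ versus $q$ all come out right). The tensor factorization $T'_i = \bar{T}'_i\otimes s_i$ in part (1) is also correct, and the reduction of the braid relations to the $\Psib$-factor is legitimate.

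The gap is in the rank-two reduction and in what follows it. First, the claim that ``what must be compared depends only on the $2\times 2$ block of the Cartan matrix indexed by $\{i,j\}$'' is overstated: $\bar{T}'_i(\Psib_{i,a})$ contains factors $\Psib_{k,b}$ for \emph{every} $k$ with $C_{i,k}<0$, so each application of $\bar{T}'_i$ or $\bar{T}'_j$ in the composed word deposits new variables indexed by neighbors of $i$ or $j$ outside $\{i,j\}$. These deposits accumulate with exponents and spectral shifts governed by the full rows $C_{i,\cdot}$ and $C_{j,\cdot}$ and by the \emph{intermediate} $(\Psib_i,\Psib_j)$-content at each step, not just by the endpoints of the rank-two trajectory; two words with the same rank-two endpoint could a priori deposit different outside content. (Already in type $A_3$ with $i=1$, $j=2$, the variable $\Psib_{3,aq^2}$ appears in $T'_1T'_2T'_1(\Psib_{1,a})$ and must be matched on the other side.) So the verification is finite but strictly larger than a rank-two computation, and this needs to be set up and argued. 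Second, and more importantly, the computations that constitute the actual content of part (1) --- the three-, four- and six-fold compositions, especially in type $G_2$ --- are announced as ``a delicate bookkeeping exercise'' but not performed. Since the theorem \emph{is} precisely the assertion that these monomial identities close up for the particular spectral shifts in \eqref{TiPj}, deferring them leaves the proof without its core. A less computational alternative (closer in spirit to \cite{FH4}) is to identify $\sigma T'_w\sigma(\Psib_{i,a})$ with the $\ell$-weights $\Psib_{w(\omega_i),a}$ attached to the extremal monomials $Y_{w(\omega_i),a}$, whose independence of the reduced word is supplied by Chari's Theorem \ref{occurlm}; note that one cannot instead deduce (1) from (2) plus Chari's braid relations, because $\Yim$ and $\Z[P]$ together generate only the combinations $[\omega_i]\Psib_{i,aq_i^{-1}}\Psib_{i,aq_i}^{-1}$, not the individual $\Psib_{i,a}$.
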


Variations of the braid group action are also considered in \cite{GHL, W}.

\section{Extremal monomial property of
  $q$-characters}\label{nsqc2}

 In this section, we introduce $w$-twisted root monomials, where $w$
 is an arbitrary element of the Weyl group of $\g$. We then conjecture
 a new property of $q$-characters of simple finite-dimensional
 $U_q(\ghat)$-modules which we call the {\em extremal monomial
   property} corresponding to $w$ (Conjecture \ref{mainc}). We will
 prove a weaker property, stated in Theorem \ref{pma}. In the case of
 the identity element of $W$, Conjecture \ref{mainc} has been
 established in \cite{Fre2}. For the longest element of $W$, we have
 two proofs: the first is given in the proof of Theorem \ref{long} and
 the second is obtained by combining Theorem \ref{equi} and the proof
 of Conjecture \ref{twpol} obtained in \cite{Z}. In the case of simple
 reflections, we will prove Conjecture \ref{twpol1} (and hence
 Conjecture \ref{mainc} which is equivalent to it by Theorem
 \eqref{equi}) in Section \ref{tmproof} (Theorem \ref{symsimp}).

\subsection{Ordinary characters}

The character $\chi(V_i)$ of a fundamental representation $V_i$ of $\mathfrak{g}$ (or, equivalently, of $U_q(\mathfrak{g})$) 
is invariant for the action of the Weyl group $W$. Consequently 
%$$\chi(V_i) \in [w(\omega_i)] \mathbb{Z}[[-w(\alpha_j)] ]_{j\in I} \subset [w(\omega_i)] \mathbb{Z}[[\pm w(\alpha_j)] ]_{j\in I} \subset \mathbb{Z}[[\pm w(\omega_j)]]_{j\in I}.$$
%In other words we have :
$$\chi(V_i)\in [w(\omega_i)] + \sum_{\lambda \prec_{w} w(\omega_i)} \mathbb{Z} [\lambda],$$
where the partial ordering $\prec_w$ is defined by 
$$\lambda \preceq_w \mu \Leftrightarrow \mu - \lambda \in \sum_{j\in I} \mathbb{Z} [w(\alpha_j)]\Leftrightarrow w^{-1}(\lambda)\preceq w^{-1}(\mu).$$

The Weyl group symmetry of $q$-character established in \cite{FH3} does not give a direct analogue of this statement because the image of a 
monomial in $\mathcal{Y}$ is not a monomial $\mathcal{Y}$. Instead, we
find a new {\em extremal monomial property} of $q$-characters.

\subsection{Root monomials}

Let us introduce the $w$-{\em twisted root
    monomials} for $i\in I$,
$a\in\mathbb{C}^\times$ and $w\in W$:
$$A_{i,a}^w = T_w(A_{i,a})\text{ for $i\in I$ and $a\in\mathbb{C}^\times$.}$$
The corresponding weight is 
$$\varpi(A_{i,a}^w) = w(\alpha_i).$$

%\begin{rem} In opposition to the case of fundamental weight, the simple roots belong to the same 
%$W$-orbit, that is why we keep the notation $A_{i,a}^w$.
%\end{rem}

\begin{lem}\label{qroot} For any $w\in W$, $T_w$ induces a ring automorphism of $\mathbb{Z}[A_{i,a}^{\pm 1}]_{i\in I, a\in\mathbb{C}^\times}$.
\end{lem}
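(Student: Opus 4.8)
\textbf{Proof proposal for Lemma \ref{qroot}.}

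The plan is to reduce the claim to the corresponding statement for the simple reflections $T_i$, and then to verify that each $T_i$ maps the subring $\mathbb{Z}[A_{j,b}^{\pm 1}]_{j\in I, b\in\mathbb{C}^\times}$ into itself bijectively. Since each $T_w$ is a composition of the $T_i$ (for a fixed reduced decomposition) and since the $T_i$ generate a braid group action, it suffices to treat $w = s_i$: if every $T_i$ restricts to a ring automorphism of $R := \mathbb{Z}[A_{j,b}^{\pm 1}]_{j\in I, b\in\mathbb{C}^\times}$, then so does any product of them, hence so does $T_w$, and well-definedness (independence of the reduced decomposition) is inherited from the braid relations already established in \cite{C}. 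So the real content is the single-reflection case.

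First I would recall that $T_i$ is an algebra automorphism of the ambient ring $\Yim = \mathbb{Z}[Y_{j,b}^{\pm 1}]$, so it automatically respects products and inverses; the only thing to check is that $T_i(A_{j,b})$ lies in $R$ for every $j$ and $b$, and likewise $T_i^{-1}(A_{j,b}) \in R$ (which, since $T_i^{-1}$ is of the same shape — it is $T_i$ for the opposite braid action, or can be computed directly from \eqref{TY1} — follows by the same computation). To see $T_i(A_{j,b}) \in R$, I would apply $T_i$ to the explicit formula \eqref{Ai} for $A_{j,b}$ as a monomial in the $Y_{k,c}$'s, using \eqref{TY1}: $T_i$ fixes every $Y_{k,c}$ with $k \neq i$ and sends $Y_{i,c} \mapsto Y_{i,c} A_{i,cq_i}^{-1}$. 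Thus $T_i(A_{j,b})$ equals $A_{j,b}$ multiplied by a monomial in the $A_{i,c}^{-1}$'s, where $c$ ranges over the shifts $b q_i^{\epsilon}$ dictated by the positions in which $Y_{i,\cdot}$ occurs in \eqref{Ai}. In particular $T_i(A_{j,b})$ is a Laurent monomial in the $A$'s, hence lies in $R$. The cleanest way to organize this is case-by-case on the relation between $i$ and $j$: when $j = i$ one gets (after a short computation with \eqref{Ai} and \eqref{TY1}) that $T_i(A_{i,b}) = A_{i,b} \cdot (\text{monomial in } A_{i,\cdot}^{-1})$ — in fact one can check $T_i(A_{i,b}) = A_{i,bq_i^2}^{-1}$ times a unit, consistent with $s_i(\alpha_i) = -\alpha_i$ under $\varpi$; when $C_{i,j} = 0$ one has $T_i(A_{j,b}) = A_{j,b}$ since no $Y_{i,\cdot}$ appears in \eqref{Ai}; and when $C_{i,j} \in \{-1,-2,-3\}$ one gets $A_{j,b}$ times an explicit product of $A_{i,\cdot}^{-1}$'s read off from the corresponding factor of \eqref{Ai}. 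In every case the output is manifestly in $R$.

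Finally, surjectivity of $T_i|_R$ onto $R$: since $T_i$ is invertible on $\Yim$ with inverse of the same combinatorial type, the same computation applied to $T_i^{-1}$ shows $T_i^{-1}(A_{j,b}) \in R$, so $T_i|_R$ is a bijection $R \to R$, hence a ring automorphism of $R$. Composing over a reduced word for $w$ gives the statement for $T_w$. I do not expect any serious obstacle here; the only mild care needed is bookkeeping of the spectral parameters (the powers of $q_i$, $q$) when expanding \eqref{Ai} under \eqref{TY1} in the $C_{i,j} = -2$ and $C_{i,j} = -3$ cases, and confirming that the resulting shifts are consistent — but this is routine, and in fact it is exactly the computation underlying the identity $\varpi(A_{i,a}^w) = w(\alpha_i)$ already recorded above, which serves as a useful consistency check at the level of weights.
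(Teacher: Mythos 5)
Your proposal is correct and follows essentially the same route as the paper: reduce to the generators $T_i$, verify via the explicit formulas \eqref{TY1} and \eqref{Ai} that each $T_i(A_{j,b})$ is a Laurent monomial in the $A$'s (the paper simply quotes these monomial identities from \cite{CM} rather than rederiving them), and then obtain surjectivity by inverting those monomial formulas. The only cosmetic imprecision is the phrase ``$A_{i,bq_i^2}^{-1}$ times a unit'' in the case $j=i$: the computation gives $T_i(A_{i,b}) = A_{i,bq_i^2}^{-1}$ exactly, since the factor $A_{i,b}^{-1}$ coming from $Y_{i,bq_i^{-1}}$ cancels the leading $A_{i,b}$.
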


\begin{proof} It suffices to establish the statement for the $T_i, i
  \in I$.
It is established in \cite{CM} (by direct computation) that
$\mathbb{Z}[A_{j,a}^{\pm 1}]_{j\in I, a\in\mathbb{C}^\times}$ is
preserved by the $T_i$:

$$T_i(A_{i,a}) = A_{i,aq_i^2}^{-1}$$
$$T_i(A_{j,a}) = A_{j,a}\text{ if $C_{i,j} = 0$},$$
$$T_i(A_{j,a}) = A_{j,a}A_{i,aq_i}\text{ if $C_{i,j} = -1$},$$
$$T_i(A_{j,a}) = A_{j,a}A_{i,a}A_{i,aq^2}\text{ if $C_{i,j} = -2$},$$
$$T_i(A_{j,a}) = A_{j,a}A_{i,aq^{-1}}A_{i,aq}A_{i,aq^3}\text{ if $C_{i,j} = -3$}.$$
Now we have for $i,j\in I$:  
$$A_{i,a} = T_i(A_{i,aq_i^{-2}}^{-1})$$
$$A_{j,a} = T_i(A_{j,a})\text{ if $C_{i,j} = 0$},$$
$$A_{j,a} =  T_i(A_{j,a}A_{i,aq_i^{-1}})\text{ if $C_{i,j} = -1$},$$
$$A_{j,a} =  T_i(A_{j,a}A_{i,a}A_{i,aq^{-2}})\text{ if $C_{i,j} = -2$},$$
$$A_{j,a} =  T_i(A_{j,a}A_{i,aq}A_{i,aq^{-1}}A_{i,aq^{-3}})\text{ if $C_{i,j} = -3$}.$$
Hence $T_i$ induces an automorphism.
\end{proof}

\subsection{Extremal monomial property defined}

\begin{prop} There is a well-defined partial ordering $\preceq_w$ on
  $\mathcal{M}$ defined by the property
$$m\preceq_w m'\Leftrightarrow m' m^{-1} \in \mathbb{Z}[(A_{j,b}^w)^{-1}]_{j\in I,b\in\mathbb{C}^\times}\Leftrightarrow T_w^{-1}(m)\preceq T_w^{-1}(m').$$
\end{prop}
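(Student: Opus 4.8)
The plan is to reduce the claim to three elementary facts: (i) for $w = e$, the ordinary highest-monomial ordering $\preceq_e$ on $\mathcal{M}$ given by $m \preceq_e m' \Leftrightarrow m' m^{-1} \in \mathbb{Z}[A_{j,b}^{-1}]_{j\in I, b\in\mathbb{C}^\times}$ is a partial ordering; (ii) the operator $T_w$ is a bijection on the group $\mathcal{M}$ of monomials; and (iii) $T_w$ restricts to an automorphism of $\mathbb{Z}[A_{i,a}^{\pm 1}]_{i\in I, a\in\mathbb{C}^\times}$, which is precisely Lemma~\ref{qroot}, and moreover $T_w$ sends the sub-semiring $\mathbb{Z}[A_{i,a}^{-1}]$ into $\mathbb{Z}[(A_{i,a}^w)^{-1}]$ essentially by definition, since $A_{i,a}^w := T_w(A_{i,a})$.

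First I would verify (i) directly. Reflexivity is clear since $m m^{-1} = 1 \in \mathbb{Z}[A_{j,b}^{-1}]$. For antisymmetry, suppose $m \preceq_e m'$ and $m' \preceq_e m$; then both $m' m^{-1}$ and its inverse $m (m')^{-1}$ lie in $\mathbb{Z}[A_{j,b}^{-1}]$, which, being a polynomial ring in the (multiplicatively independent) variables $A_{j,b}^{-1}$, has only $\pm 1$ as invertible elements; since $m' m^{-1}$ is a monomial with coefficient $1$, this forces $m' m^{-1} = 1$, i.e.\ $m = m'$. Transitivity follows because $\mathbb{Z}[A_{j,b}^{-1}]$ is closed under multiplication: if $m' m^{-1}$ and $m'' (m')^{-1}$ both lie in it, so does their product $m'' m^{-1}$. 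Here one should note that although $m' m^{-1}$ need not be a single monomial in the $A_{j,b}^{-1}$, it is always a $\ZZ$-linear combination of such monomials when $m \preceq_e m'$, and that is all that is needed.

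Next I would transport this along $T_w$. By Lemma~\ref{qroot}, $T_w$ is a ring automorphism of $\mathbb{Z}[A_{i,a}^{\pm 1}]_{i\in I, a\in\mathbb{C}^\times}$, sending $A_{i,a} \mapsto A_{i,a}^w$; inspecting the explicit formulas in the proof of Lemma~\ref{qroot} (the images of the generators $A_{i,a}^{\pm1}$ under each $T_i^{\pm1}$ are monomials with \emph{positive} powers of the $A_{j,b}$'s in the appropriate sign pattern), one sees that $T_w$ carries the sub-semiring $\mathbb{Z}[A_{i,a}^{-1}]$ isomorphically onto $\mathbb{Z}[(A_{i,a}^w)^{-1}]$. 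Since $T_w$ is also an automorphism of the ambient ring $\Yim$ preserving $\mathcal{M}$, for any $m, m' \in \mathcal{M}$ we have $m' m^{-1} \in \mathbb{Z}[(A_{j,b}^w)^{-1}] \Leftrightarrow T_w^{-1}(m' m^{-1}) = T_w^{-1}(m') T_w^{-1}(m)^{-1} \in \mathbb{Z}[A_{j,b}^{-1}] \Leftrightarrow T_w^{-1}(m) \preceq_e T_w^{-1}(m')$. This establishes the equivalence of the two conditions in the proposition, and the relation $\preceq_w$ is then a partial ordering on $\mathcal{M}$ because it is the pullback of the partial ordering $\preceq_e$ under the bijection $T_w^{-1}$ of $\mathcal{M}$.

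The only real point requiring care — the ``main obstacle,'' though it is a mild one — is the first equivalence: checking that the set $\mathbb{Z}[(A_{j,b}^w)^{-1}]$ is genuinely a polynomial (semi)ring in the $A_{j,b}^w$, i.e.\ that the twisted root monomials $A_{j,b}^w$ are multiplicatively independent so that no unexpected cancellation occurs. This follows from the fact that $T_w$ is injective on $\mathcal{M}$ together with multiplicative independence of the untwisted $A_{j,b}$ (which holds because $\varpi$ sends them to the simple roots $\alpha_j$, and the variables $Y_{i,a}$ are independent); hence antisymmetry and transitivity of $\preceq_w$ are inherited cleanly. I would present the argument in the order: prove well-definedness and the partial-order axioms for $\preceq_e$, invoke Lemma~\ref{qroot} and the fact that $T_w$ permutes $\mathcal{M}$, then conclude by conjugation.
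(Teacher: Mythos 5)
Your proposal is correct and follows essentially the same route as the paper: the whole point is the algebraic independence of the twisted root monomials $A_{j,b}^w$, which the paper (like you) obtains from the $w=e$ case of \cite{Fre} together with Lemma \ref{qroot}, the transport along the automorphism $T_w$ being exactly the mechanism by which the general case reduces to $w=e$. Your additional verification of the partial-order axioms and the remark that $T_w(\mathbb{Z}[A_{j,b}^{-1}])=\mathbb{Z}[(A_{j,b}^w)^{-1}]$ holds by definition are fine (the parenthetical appeal to the explicit sign patterns in Lemma \ref{qroot} is unnecessary for that tautology), and no gap remains.
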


\begin{proof} This partial ordering is well-defined since the elements
  $A_{j,b}^w$ are algebraically independent (for $w=e$, this is known
    from \cite{Fre}, and for general $w \in W$ the result follows from Lemma
  \ref{qroot}).
\end{proof}

\begin{rem}
For $w = e$, this is sometimes called the Nakajima partial ordering.
\end{rem}

The following is the first main conjecture of this paper, which
expresses the {\em extremal monomial property} of the $q$-characters
of simple finite-dimensional $U_q(\ghat)$-modules. It is a
generalization to an arbitrary element $w \in W$ of the highest
monomial property of the $q$-characters, conjectured in
\cite{Fre} and proved in \cite{Fre2}, which corresponds to the identity
element $w = e$ of the Weyl group (see Theorem \ref{chiqprop},(ii)).

\begin{conj}\label{mainc} Let $L(m)$ be the simple
finite-dimensional $U_q(\wh{\mathfrak{g}})$-module with the
highest monomial $m$ and $w\in W$. Then
\begin{equation}    \label{taylor}
\chi_q(L(m)) \in T_w(m) \mathbb{Z}[(A_{j,b}^w)^{-1}]_{j\in
  I,b\in\mathbb{C}^\times}.
\end{equation}
Equivalently,
$$\chi_q(L(m)) = T_w(m) + \sum_{m'\prec_w T_w(m)} c_{m'}
  \; m', \qquad c_{m'} \in \Z_{\geq 0}.$$
\end{conj}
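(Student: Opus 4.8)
The plan is to establish \eqref{maincom} in the basic cases $w=e$, $w=w_0$ and $w=s_i$ ($i\in I$), and to reduce the general case to the polynomiality of the generalized Baxter operators. The obvious attempt -- induction on $l(w)$, deducing the case $s_iw$ with $l(s_iw)=l(w)+1$ from the case $w$ by applying $T_i$ -- fails at the outset: although $T_i$ is a ring automorphism of $\Yim$ sending $A^w_{j,b}$ to $A^{s_iw}_{j,b}$ when the lengths add (Lemma~\ref{qroot}), it does not preserve the image of $\chi_q$, so it does not carry the set of monomials of $\chi_q(L(m))$ into anything controlled by the $s_iw$-twisted cone. Hence the basic cases must be handled separately.

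I would begin with the weaker statement \eqref{mainthm} (Theorem~\ref{pma}). By Theorem~\ref{occurlm} the extremal monomial $T_w(m)$ occurs in $\chi_q(L(m))$, and by Theorem~\ref{chiqprop}(3) every monomial of $\chi_q(L(m))$ lies in $m\,\mathbb{Z}[A_{j,b}^{-1}]$; therefore $T_w(m)=m\prod_k A_{j_k,b_k}^{-c_k}$ with $c_k\ge 0$, so $T_w(m)\in m\,\mathbb{Z}[A_{j,b}^{\pm1}]$ and also $m\in T_w(m)\,\mathbb{Z}[A_{j,b}^{\pm1}]$. Applying $T_w^{-1}$ (a ring automorphism preserving $\mathbb{Z}[A_{j,b}^{\pm1}]$ with $T_w^{-1}(A^w_{j,b})=A_{j,b}$), statement \eqref{mainthm} becomes $T_w^{-1}(\chi_q(L(m)))\in m\,\mathbb{Z}[A_{j,b}^{\pm1}]$, and this holds because each monomial $m'=m\prod_k A_{i_k,a_k}^{-1}$ of $\chi_q(L(m))$ is carried by $T_w^{-1}$ into $T_w^{-1}(m)\,\mathbb{Z}[A_{j,b}^{\pm1}]=m\,\mathbb{Z}[A_{j,b}^{\pm1}]$ by Lemma~\ref{qroot}. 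Equivalently, this is the rationality of the renormalized eigenvalues of the $X$-series (Theorem~\ref{rattw}). For $w=e$, the full statement \eqref{maincom} is Theorem~\ref{chiqprop}(3) from \cite{Fre2}. For $w=w_0$, I would use the graded-dual functor $V\mapsto V^*$: on $q$-characters of finite-dimensional modules it acts by a ring substitution $Y_{i,a}\mapsto Y_{\overline{i},\,a'}^{-1}$ (with $a'$ a fixed rescaling of $a$) that carries each $A_{i,a}$ to an inverse root monomial, reverses the Nakajima ordering, and sends the lowest monomial $T_{w_0}(m)$ of $\chi_q(L(m))$ to the highest monomial of $\chi_q(L(m)^*)$; feeding the $w=e$ statement for $L(m)^*$ back through this substitution yields \eqref{maincom} for $w_0$ (Theorem~\ref{long}). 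This is consistent because $\varpi(A^{w_0}_{j,b})=w_0(\alpha_j)=-\alpha_{\overline{j}}$, so $\mathbb{Z}[(A^{w_0}_{j,b})^{-1}]$ is a cone of positive powers of ordinary root monomials; an independent proof for $w_0$ combines Theorem~\ref{equi} with \cite{Z}.

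For a simple reflection $w=s_i$ I would not argue with monomials directly. Instead I would invoke the equivalence of Conjecture~\ref{mainc} with polynomiality of the eigenvalues of the $X$-series $X_{s_i(\omega_j)}(z)$, $j\in I$ (Theorem~\ref{equi}), and prove the latter, i.e. Conjecture~\ref{twpol1} for $s_i$. The ingredients are: the $W$-invariance of the pairing of Proposition~\ref{equith}, which produces the eigenvalue identities of Corollary~\ref{inveigw} relating the $X$-series for $e$ and for $s_i$; the rationality already obtained above (Theorem~\ref{rattw}); and the $w=e$ polynomiality proved in \cite{FH}. For a simple reflection, these identities together with the $TQ$-relation at the node $i$ reduce the problem to the rank-one algebra $U_{q_i}(\su)$, where Baxter's polynomiality from \cite{FH} applies; transporting the conclusion back yields \eqref{maincom} for $w=s_i$ (Theorems~\ref{refl} and \ref{symsimp}).

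The step I expect to be the real obstacle is the passage to an arbitrary $w\in W$. There is no $\su$-reduction beyond simple reflections, and, as noted above, the induction on $l(w)$ built from the $T_i$ is blocked because Chari's action does not preserve $q$-characters. The route I would then follow -- and which the paper adopts -- is to establish polynomiality of \emph{all} generalized Baxter operators $t_{w(\omega_i),a}(z,u)$ (Conjecture~\ref{trm}); since the $X$-series are limits of these operators (after rescaling $z$), this would imply Conjecture~\ref{twpol1}, and hence, via Theorem~\ref{equi}, Conjecture~\ref{mainc} in full. Proving this requires genuinely new input from the theory of the XXZ-type integrable models attached to $U_q(\ghat)$, and that is where the main difficulty lies.
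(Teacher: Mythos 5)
Your overall architecture coincides with the paper's: you correctly recognize that the statement is a conjecture, prove the weaker Laurent version (Theorem \ref{pma}) for all $w$ exactly as the paper does (multiplicity one of $T_w(m)$ plus the fact that $T_w$ preserves $\mathbb{Z}[A_{j,b}^{\pm 1}]$ by Lemma \ref{qroot}), handle $w=e$ by \cite{Fre2} and $w=w_0$ by the involution of \cite{h3} as in Theorem \ref{long}, set up the equivalence with polynomiality of the $X$-series via Proposition \ref{taw} and Theorem \ref{equi}, and correctly identify that the general case is reduced to (still conjectural) polynomiality of the generalized Baxter operators.

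The one genuine gap is your treatment of the simple reflections. You assert that, for $w=s_i$, the eigenvalue identities of Corollary \ref{inveigw} ``together with the $TQ$-relation at the node $i$ reduce the problem to the rank-one algebra $U_{q_i}(\su)$, where Baxter's polynomiality from \cite{FH} applies.'' No such direct rank-one reduction is available, and the paper explicitly warns against it: the remark following the $sl_3$ example points out that the extremal monomial property does \emph{not} follow from a reduction to the $sl_2$-case, because the relevant factors $(A_{i,aq^{-1}}^{s_i})^{-1}$ and $(A_{i,aq}^{s_i})^{-1}$ occur with different spectral parameters, in contrast to the situation for $w=e$ in \cite{Fre2}. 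What the paper actually does in Theorem \ref{symsimp} is quite different and is the technical heart of that section: after reducing (via Proposition \ref{taw}, multiplicativity \eqref{mult}, and restriction to fundamental representations) to showing that every monomial $m = Y_{l,1}\prod A_{k,a}^{-v_{k,a}}$ of $\chi_q(L(Y_{l,1}))$ has only negative powers of the $A_{k,a}^{s_i}$, one must prove the explicit inequality \eqref{inegv} bounding $v_{i,a}$ by a sum of the $v_{k,b}$ over neighboring nodes $k$. This is established by induction on the weight using the decomposition $\chi_q(L(m)) = \sum_M \lambda_p(M) L_p(M)$ into $p$-dominant pieces from \cite{Hmz}, with a separate analysis of the cases $p \neq i$ and $p = i$ (the latter using explicit $U_q(\wh{sl}_2)$ $q$-character formulas to compare $v_{i,a}(mM^{-1})$ with the exponents of $M$), and a further case distinction for $l=i$ versus $l \neq i$. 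Your sketch contains none of this, and the step it replaces it with would fail for exactly the reason the paper's remark records. Everything else in your proposal is sound, but as written the simple-reflection case --- which is the only genuinely new case established in the paper --- is not proved.
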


\begin{rem} Conjecture \ref{mainc} is equivalent to the
following statement: for all $i\in I$, $a\in\mathbb{C}^\times$, and
$w\in W$,
$$\chi_q( L(Y_{i,a})) \in Y_{w(\omega_i),a}
\mathbb{Z}[(A_{j,b}^w)^{-1}]_{j\in I,b\in\mathbb{C}^\times}.$$
\end{rem}

\begin{thm}    \label{long}
Conjecture \ref{mainc} holds when $w$ is the identity element and the
longest element of the Weyl group.
\end{thm}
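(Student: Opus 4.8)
The case $w = e$ is exactly Theorem \ref{chiqprop}(3), so nothing needs to be done there; $T_e = \mathrm{id}$, $A_{j,b}^e = A_{j,b}$, and the claim is literally \eqref{mainthm1}. The real content is the case $w = w_0$, the longest element. My plan is to reduce it to the $w = e$ case by exploiting the relationship between $L(m)$ and its (left) dual, combined with the compatibility of $T_{w_0}$ with duality. Concretely, recall from Remark \ref{tau} that for a finite-dimensional $U_q(\ghat)$-module $V$, the double dual $(V^*)^*$ is isomorphic to $V$ up to the twist $\tau_{q^{-2r^\vee h^\vee}}$; and from Proposition \ref{dualweight} that $(L'(\Psib))^* \simeq L(\Psib^{-1})$ for simple objects. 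The upshot is that the $q$-character of the dual module $L(m)^*$ is obtained from $\chi_q(L(m))$ by the substitution $Y_{i,a} \mapsto Y_{\overline i, aq^{-r^\vee h^\vee}}^{-1}$ (the standard ``duality'' operation on $q$-characters, using the bar involution $i \mapsto \overline i$ defined via $w_0(\alpha_i) = -\alpha_{\overline i}$). Under this substitution, $A_{i,a} \mapsto A_{\overline i, aq^{-r^\vee h^\vee}}^{-1}$, so Theorem \ref{chiqprop}(3) applied to $L(m)^* = L(m^*)$ and then transported back says precisely that $\chi_q(L(m))$ lies in the lowest monomial $\times\, \Z[A_{i,a}^{+1}]$. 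But the lowest monomial of $L(m)$ is exactly the extremal monomial for $w_0$: indeed $\varphi$ sends it to the lowest weight $w_0(\lambda)$, it has multiplicity one, and by Theorem \ref{occurlm} that monomial is $T_{w_0}(m)$.

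So the skeleton is: (i) identify the duality operation on $\Yim$, check it sends $A_{i,a}$ to $A_{\overline i, aq^{-r^\vee h^\vee}}^{-1}$ and is an anti-automorphism of the monomial lattice reversing the Nakajima order; (ii) apply Theorem \ref{chiqprop}(3) to the dual simple module $L(m^*)$ to get that all of its monomials are $\leq$ its highest monomial in the Nakajima order, equivalently all monomials of $L(m)$ are $\geq$ its \emph{lowest} monomial $m_{\mathrm{low}}$ in the sense that $m' / m_{\mathrm{low}} \in \Z[A_{i,a}^{+1}]$; (iii) observe $\Z[A_{i,a}^{+1}] = \Z[(A_{i,a}^{w_0})^{-1}]$ as subrings of (the monomial group ring), because by Lemma \ref{qroot} and its explicit formulas $T_{w_0}$ sends each $A_{i,a}$ to a Laurent monomial in the $A_{j,b}^{-1}$ with nonnegative total exponent, and in fact $T_{w_0}(A_{i,a})$ is, up to a shift, $A_{\overline i, \,\cdot}^{-1}$ — this is the ``$T_{w_0}$ reverses the root order'' statement, which one can extract by inducting on a reduced word using the five cases listed in the proof of Lemma \ref{qroot}; (iv) conclude $m_{\mathrm{low}} = T_{w_0}(m)$ via Theorem \ref{occurlm} and assemble \eqref{taylor}.

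The step I expect to be the main obstacle is (iii): making precise and rigorous the claim that $T_{w_0}$ conjugates the subring $\Z[A_{i,a}^{-1}]$ to $\Z[(A_{i,a}^{w_0})^{-1}] = T_{w_0}(\Z[A_{i,a}^{-1}])$ \emph{and} that this subring coincides with $\Z[A_{\overline i, aq^{-r^\vee h^\vee}}^{+1}]$, which is what the duality argument produces. The cleanest route is probably not to track $T_{w_0}$ monomial-by-monomial but to argue at the level of the partial orders: one shows that $m' \preceq_{w_0} T_{w_0}(m)$ for every monomial $m'$ of $\chi_q(L(m))$ is \emph{equivalent}, after applying the duality involution, to $(m')^\vee \preceq_e (\text{highest monomial of } L(m)^*)$, which is Theorem \ref{chiqprop}(3). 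This sidesteps the explicit combinatorics of long reduced words. One then only needs the (elementary) facts that the duality involution intertwines $\preceq_e$ with $\preceq_{w_0}$ and sends the highest monomial of $L(m^*)$ to $T_{w_0}(m)$; the former follows from $A_{i,a} \mapsto A_{\overline i,\cdot}^{-1}$ together with $T_{w_0}$ acting on the weight lattice by $\alpha_i \mapsto -\alpha_{\overline i}$, and the latter from uniqueness of the multiplicity-one extremal monomial (Theorem \ref{occurlm}) pinned down by its image $w_0(\lambda)$ under $\varphi$.
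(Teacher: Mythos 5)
Your proposal is correct and takes essentially the same route as the paper: the paper likewise deduces the $w=w_0$ case from the $w=e$ case by applying the duality involution on $q$-characters (the one from \cite[Section 4.2]{h3}, i.e.\ your substitution $Y_{i,a}\mapsto Y_{\overline{i},aq^{-r^\vee h^\vee}}^{-1}$) and then identifying $(A_{j,b}^{w_0})^{-1}=A_{\overline{j},bq^{h^\vee r^\vee}}$. The step you flag as the main obstacle is dispatched in the paper by the single explicit formula $T_{w_0}(Y_{j,b})=Y_{\overline{j},bq^{h^\vee r^\vee}}^{-1}$ from \cite{C,Fre2}, from which the identification of the twisted root monomials follows directly, with no induction on a reduced word needed.
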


\begin{proof}
In the case $w=e$, Conjecture \ref{mainc} has been proved in
\cite{Fre2}. Applying to this result the involution studied in
\cite[Section 4.2]{h3}, we obtain
$$
\chi_q(L(Y_{i,a})) \in Y_{w_0(\omega_i),a} \mathbb{Z}[A_{j,b}]_{j\in I,
  b\in\mathbb{C}^\times}.
$$
This implies the statement of Conjecture \ref{mainc} for $w = w_0$
(the longest element of the Weyl group) because one can readily see that
$(A_{j,b}^{w_0})^{-1} = A_{\overline{j},bq^{h^{\vee}r^\vee}}$,
	where the bar involution of $I$ is defined by the
formula $\al_{\overline{i}} = -w_0(\al_i)$. In fact, $T_{w_0}(Y_{j,b}) = Y_{\overline{j},bq^{h^{\vee}r^\vee}}^{-1}$ by \cite{C, Fre2}.
\end{proof}

The partial ordering $\preceq_w$ induces a partial ordering on simple
modules, which we denote by the same symbol:
$$L(m)\preceq_w L(m') \Leftrightarrow m\preceq_w m'.$$
Then the statement of the Conjecture \ref{mainc} implies that for
simple modules $L(m)$ and $L(m')$, we have the following relation in
the Grothendieck ring of finite-dimensional $U_q(\ghat)$-modules: 
$$[L(m)\otimes L(m')] = [L(mm')] + \sum_{ L \prec_w L(mm')}
 k_L [L], \qquad k_L \in \Z_{\geq 0}$$
because $T_w(mm') = T_w(m) T_w(m')$ occurs with multiplicity $1$ in
$\chi_q(L(mm'))$.

\begin{rem} It would be interesting to find out whether the partial
  orderings $\preceq_w$ on simple modules are related to the partial
  orderings introduced in \cite{kkop}.\end{rem}

\begin{example} (i) Let $\Glie = sl_3$ and $w = s_1$. Then 
$$Y_{w(\omega_1),a} = Y_{1,aq^2}^{-1}Y_{2,aq}\text{ , }Y_{w(\omega_2),a} = Y_{2,a}\text{ , }A_{1,a}^w = A_{1,aq^2}^{-1}\text{ , }A_{2,a}^w = A_{2,a}A_{1,aq},$$
$$\chi_q(L(Y_{1,a})) = Y_{w(\omega_1),a}(1 + (A_{1,aq^{-1}}^w)^{-1} + (A_{1,aq}^w)^{-1} (A_{2,aq^2}^w)^{-1}),$$
$$\chi_q(L(Y_{2,a})) = Y_{w(\omega_2),a}(1 + (A_{2,aq}^w)^{-1} + (A_{2,aq}^w)^{-1}(A_{1,a}^w)^{-1}).$$

(ii) Let $\Glie = B_2$ ($d_1 = 2$, $d_2 = 1$) and $w = s_1s_2$. Then 
$$Y_{w(\omega_1),a} = Y_{1,aq^4}^{-1}Y_{2,aq}Y_{2,aq^3}\text{ , }Y_{w(\omega_2),a} = Y_{1,aq^5}^{-1}Y_{2,aq^4},$$
$$A_{1,a}^w = A_{1,aq^2}A_{2,aq^2}A_{2,a}
\text{ , }A_{2,a}^w = A_{2,aq^2}^{-1}A_{1,aq^4}^{-1},$$
$$\chi_q(L(Y_{1,a})) = Y_{w(\omega_1),a}(1 
+ (A_{1,aq^2}^wA_{2,a}^w)^{-1}
+ (A_{1,aq^2}^wA_{2,a}^wA_{1,a}^wA_{2,aq^{-2}}^w)^{-1} 
+ (A_{1,aq^2}^w)^{-1}$$
$$+ (A_{2,aq^{-2}}^wA_{1,aq^{-2}}^wA_{2,aq^{-4}}^w)^{-1}),$$
$$\chi_q(L(Y_{2,a})) = Y_{w(\omega_2),a}(1 +
(A_{1,aq^3}^wA_{2,aq}^w)^{-1}
+ (A_{2,aq^{-1}}^wA_{1,aq^{-1}}^wA_{2,aq^{-3}}^w)^{-1}
+ (A_{2,aq^{-1}}^w)^{-1}).$$

\end{example}

\begin{rem} The example (i) shows that the extremal monomial property
  does not follow from a reduction to 
the $sl_2$-case (contrary to the case $w = e$, as shown
  in the proof of the highest monomial property in \cite{Fre2}). Indeed, 
the factors $(A_{1,aq^{-1}}^w)^{-1}$ and $(A_{1,aq}^w)^{-1}$ occur with different spectral parameters therein.
\end{rem}

 The following theorem is a weaker version of Conjecture
  \ref{mainc}; namely, in Conjecture \ref{mainc} we have polynomials
  in $(A_{j,b}^w)^{-1}$ (see formula \eqref{taylor}), whereas in this
  theorem we have polynomials in $(A_{j,b}^w)^{\pm 1}$ (see formula
  \eqref{laurent}).

\begin{thm}\label{pma}
 Let $L(m)$ be the simple finite-dimensional
$U_q(\wh{\mathfrak{g}})$-module with the highest monomial $m$ and $w
  \in W$. Then
\begin{equation}    \label{laurent}
\chi_q(L(m)) \in T_w(m) \mathbb{Z}[(A_{j,b}^w)^{\pm 1}]_{j\in
  I,b\in\mathbb{C}^\times}\subset
\mathbb{Z}[(Y_{w(\omega_j),b})^{\pm 1}]_{j\in
  I,b\in\mathbb{C}^\times}.
\end{equation}
\end{thm}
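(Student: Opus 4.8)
The plan is to reduce Theorem~\ref{pma} to a purely combinatorial statement about Chari's action on monomials. The first observation is that the ring $\mathbb{Z}[(A_{j,b}^w)^{\pm 1}]$ appearing in \eqref{laurent} is nothing new: since $A_{j,b}^w = T_w(A_{j,b})$ and $T_w$ restricts to a ring automorphism of $\mathbb{Z}[A_{j,b}^{\pm 1}]$ by Lemma~\ref{qroot}, we have
$$\mathbb{Z}[(A_{j,b}^w)^{\pm 1}] = T_w\bigl(\mathbb{Z}[A_{j,b}^{\pm 1}]\bigr) = \mathbb{Z}[A_{j,b}^{\pm 1}]$$
as subrings of $\mathcal Y$. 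The second inclusion in \eqref{laurent} is then automatic: $T_w$ is a ring automorphism of all of $\mathcal Y$ carrying $Y_{j,b}$ to $Y_{w(\omega_j),b}$, so applying $T_w$ to \eqref{Ai} shows each $A_{j,b}^w$ is a Laurent monomial in the $Y_{w(\omega_j),b}$, whence $\mathbb{Z}[(A_{j,b}^w)^{\pm 1}]\subset T_w(\mathcal Y)=\mathbb{Z}[Y_{w(\omega_j),b}^{\pm 1}]=\mathcal Y$ and $T_w(m)\in\mathcal M\subset\mathcal Y$. Thus \eqref{laurent} is equivalent to the assertion $\chi_q(L(m))\in T_w(m)\,\mathbb{Z}[A_{j,b}^{\pm 1}]$.

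Next I would bring in Theorem~\ref{chiqprop}(3), which gives $\chi_q(L(m))\in m\,\mathbb{Z}[A_{j,b}^{-1}]\subset m\,\mathbb{Z}[A_{j,b}^{\pm 1}]$. Let $\mathcal Q\subset\mathcal M$ denote the multiplicative subgroup of monomials generated by all the $A_{j,b}$. Every element of $\mathcal Q$ is a unit of $\mathbb{Z}[A_{j,b}^{\pm 1}]$ whose inverse lies again in $\mathcal Q$, so $m\,\mathbb{Z}[A_{j,b}^{\pm 1}]=T_w(m)\,\mathbb{Z}[A_{j,b}^{\pm 1}]$ as soon as
$$T_w(m)\,m^{-1}\in\mathcal Q \qquad \text{for every } w\in W \text{ and every monomial } m\in\mathcal M.$$
Hence the theorem follows once this last statement is established.

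To prove it, I would note that already each generator $T_i$ shifts an arbitrary monomial by an element of $\mathcal Q$: writing $m=\prod_{j,a}Y_{j,a}^{c_{j,a}}$ with $c_{j,a}\in\mathbb Z$ (finitely many nonzero), formula \eqref{TY1} gives $T_i(m)\,m^{-1}=\prod_a A_{i,aq_i}^{-c_{i,a}}\in\mathcal Q$. Now fix a reduced decomposition $w=s_{i_1}\cdots s_{i_k}$, put $m_0=m$ and $m_l=T_{i_l}(m_{l-1})$, so that $m_k=T_w(m)$; each ratio $m_l\,m_{l-1}^{-1}=T_{i_l}(m_{l-1})\,m_{l-1}^{-1}$ lies in $\mathcal Q$ by the previous line, hence so does the telescoping product $T_w(m)\,m^{-1}=\prod_{l=1}^{k}m_l\,m_{l-1}^{-1}$, since $\mathcal Q$ is a group. (Equivalently, one inducts on $\ell(w)$, using in addition that $T_i$ preserves $\mathcal Q$, which is part of Lemma~\ref{qroot}.) There is no genuine obstacle in this argument: the only substantive input beyond the explicit formula \eqref{TY1} is Lemma~\ref{qroot}, which rests on the direct computations of \cite{CM}. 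The real difficulty is instead the sharper, \emph{polynomial} statement of Conjecture~\ref{mainc}, in which only negative powers of the $A_{j,b}^w$ are allowed; that is taken up separately later in the paper.
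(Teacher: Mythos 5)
Your proof is correct, and its skeleton --- identify $\mathbb{Z}[(A_{j,b}^w)^{\pm 1}]$ with $\mathbb{Z}[A_{j,b}^{\pm 1}]$ via Lemma \ref{qroot}, then feed in Theorem \ref{chiqprop}(3) --- matches the paper's. The one genuine difference is how you justify that $T_w(m)$ and $m$ generate the same coset of $\mathbb{Z}[A_{j,b}^{\pm 1}]$. The paper first reduces to $m = Y_{i,a}$ and then invokes Chari's Theorem \ref{occurlm}: the representation-theoretic fact that $T_w(Y_{i,a}) = Y_{w(\omega_i),a}$ actually occurs in $\chi_q(L(Y_{i,a}))$, so that by Theorem \ref{chiqprop}(3) it equals $Y_{i,a}$ times a product of $A_{j,b}^{-1}$'s. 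You instead prove the purely combinatorial statement $T_w(m)\,m^{-1} \in \mathcal{Q}$ for \emph{every} Laurent monomial $m$ by telescoping formula \eqref{TY1} along a reduced word. Your route is more elementary --- it needs no information about which monomials occur in the $q$-character beyond the highest monomial property --- and it applies verbatim to non-dominant $m$. (One cosmetic slip: with $m_l = T_{i_l}(m_{l-1})$ your $m_k$ is $T_{i_k}\cdots T_{i_1}(m) = T_{w^{-1}}(m)$ rather than $T_w(m)$; reverse the order of the word, which changes nothing since the claim is asserted for all $w$.) Both arguments ultimately rest on the explicit computations from \cite{CM} recorded in Lemma \ref{qroot}, and both correctly isolate Conjecture \ref{mainc} --- the restriction to negative powers only --- as the part that does not follow from such formal manipulations.
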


\begin{proof} It suffices to establish the result for $m = Y_{i,a}$, $i\in I$ and $a\in\mathbb{C}^\times$. 
By definition, $Y_{w(\omega_i),a}$ occurs in $\chi_q(L(Y_{i,a}))$ with
multiplicity $1$. This implies that
$$\chi_q(L(Y_{i,a})) \in Y_{w(\omega_i),a} \mathbb{Z}[(A_{j,b})^{\pm 1}]_{j\in I,b\in\mathbb{C}^\times}.$$
But, by Lemma \ref{qroot}, we have 
$$\mathbb{Z}[(A_{j,b})^{\pm 1}]_{j\in I,b\in\mathbb{C}^\times} = T_w(\mathbb{Z}[(A_{j,b})^{\pm 1}]_{j\in I,b\in\mathbb{C}^\times} ) = \mathbb{Z}[(A_{j,b}^w)^{\pm 1}]_{j\in I,b\in\mathbb{C}^\times} .$$
\end{proof}

\section{An invariant pairing}\label{epairing}

In this section, we develop technical tools to establish Conjecture
\ref{mainc} for simple reflections, and potentially for
  all elements of the Weyl group. We will introduce the
    {\em $X$-series} which are certain formal Taylor power
  series $X_{w(\omega_i)}(z), w \in W, i \in I$, of
  Drinfeld-Cartan generators in $U_q(\ghat)$. We also introduce a pairing
  that we prove to be invariant with respect to the braid group
  operators $T_w, w \in W$ (Proposition \ref{equith}). We then relate
the $\ell$-weights $\Psib_{w(\omega_i),a}$ to the $X$-series
$X_{w(\omega_i)}(za)$.
(Proposition \ref{tpsi}).

\subsection{Fundamental and prefundamental series}

 In \cite{FH}, we defined the {\em prefundamental series}
  $X_i(z), i\in I$, by the formulas
  \begin{equation}\label{fdser} X_i(z) =
\text{exp}\left( \sum_{m > 0} z^m
\frac{\tilde{h}_{i,-m}}{[d_i]_q[m]_{q_i}}\right),
\end{equation}
where
\begin{equation}    \label{wth}
\wt{h}_{i,-m} = \sum_{j\in I}[d_j]_q \wt{C}_{j,i}(q^m) h_{j,-m}\text{
  and }[d_i]_q h_{i,-m} = \sum_{j\in I} C_{j,i}(q^m) \wt{h}_{j,-m}.
\end{equation}

\begin{rem}
  In \cite{FH}, the series $X_i(z)$ was denoted by $T_i(z)$.
\end{rem}

  The series $X_i(z)$ are formal Taylor power series in $z$ which
  are related to
  the {\em fundamental series} $Y_i(z)$ defined in \cite{Fre} as follows:
$$Y_i(z) = \overline{k}_i^{-1} \frac{X_i(zq_i^{-1})}{X_i(zq_i)} = \overline{k}_i^{-1}\text{exp}\left( \sum_{m > 0} z^{m}
(q^{-1} - q)\wt{h}_{i,-m}\right),$$
where
the elements $\wt{h}_{i,n}$ are defined by formula \eqref{wth}, and
the elements $\overline{k}_i$ are defined in the adjoint version of quantum affine algebra by the relation:
$$\prod_{j\in I} \overline{k}_j^{C_{j,i}} = k_i.$$

The relation between $X_i(z)$, $Y_i(z)$, and the 
  eigenvalues of transfer-matrices will be discussed below (this will also justify the terminology). 
Note that $X_i(z)$ commutes with the $x_{j,r}^\pm$ for $j\neq i$ and $r\in\ZZ$ (since $[\wt{h}_{i,-m},x_{j,r}^\pm] = 0$ for $i\neq j$).

\begin{example} In the case $\g = sl_2$, we have $X_1(z) = \on{exp}\left(
    \sum_{m > 0} z^{m} \frac{h_{1,-m}}{[m]_q(q^m +
      q^{-m})}\right)$.\end{example}

 Since the series $Y_i(az), i \in I, a \in \C^\times$, are
  algebraically
independent in $U_q(\wh{\Hlie})[[z]])$,
there is a unique injective group homomorphism
\begin{equation}\label{identi}\mathcal{I} : \mathcal{M} \rightarrow (U_q(\wh{\Hlie})[[z]])^\times\end{equation}
  such that $\mathcal{I}(Y_{i,a}) = Y_i(za)$.

Recall from \cite{Fre} that
\begin{multline}\label{ay}\phi_i^-(z^{-1}) = \\ Y_i(zq_i)Y_i(zq_i^{-1})\prod_{j,C_{j,i} = -1}Y_j(z)\prod_{j,C_{j,i} = -2}Y_j(zq^{-1})Y_j(zq)\prod_{j,C_{j,i} = -3}Y_j(zq^{-2})Y_j(z)Y_j(zq^2).\end{multline}
 Comparing with formula \eqref{Ai}, we obtain that $\mathcal{I}(A_{i,a}) = \phi_i^-(z^{-1}a^{-1})$.

\subsection{Definition of the pairing}

Given $m\in{\mathcal M}$ and a formal power series $h(z)$ in
$U_q(\wh{\mathfrak{h}})[[z]]$, we define
a formal power series
$$\langle h(z),m \rangle\in \mathbb{C}[[z]],$$
as the eigenvalue of $h(z)$ on the $\ell$-weight space associated to
the monomial $m$.
This defines a pairing 
$$\langle \cdot,\cdot \rangle : U_q(\wh{\mathfrak{h}})[[z]] \times \mathcal{M} \rightarrow \mathbb{C}[[z]].$$
It is related to the inner product $(\cdot,\cdot)$ on the
lattice $P$ of weights of $\g$ introduced in Section \ref{Liealg} as
follows:
\begin{equation}    \label{pairi}
  \langle k_j,m \rangle = q^{(\alpha_j,\varpi(m))},\qquad j \in I.\
\end{equation}

This pairing satisfies the following property: for
$m_1,m_2\in\mathcal{M}$, $i\in I$, $m\neq 0$, we have
$$\langle h_{i,m},m_1m_2 \rangle = \langle h_{i,m},m_1 \rangle +
\langle h_{i,m},m_2 \rangle \text{ , } \langle k_i,m_1m_2 \rangle =
\langle k_i,m_1\rangle \langle k_i,m_2 \rangle,$$
\begin{equation}\label{mult}
  \langle X_i(z),m_1m_2 \rangle =
  \langle X_i(z),m_1 \rangle \langle X_i(z),m_2 \rangle \text{ , }
  \langle Y_i(z),m_1m_2 \rangle =
  \langle Y_i(z),m_1 \rangle \langle Y_i(z),m_2 \rangle.\end{equation}

By definition, for $i,j\in I$ and $m > 0$, we have 
$$\langle \wt{h}_{i,-m},Y_{j,a} \rangle = [d_i]_q [m]_{q_i}
a^{-m}\frac{\wt{C}_{i,j}(q^m)}{m},$$
and therefore
$$\langle h_{i,-m},Y_{j,a} \rangle = \delta_{i,j}
a^{-m}\frac{[m]_{q_i}}{m}\text{ , }\langle \wt{h}_{i,-m},A_{j,a}
\rangle = \delta_{i,j} a^{-m}[d_i]_q\frac{[m]_{q_i}}{m}.$$

According to the results of \cite{FH}, for $i,j\in I$,
$a\in\mathbb{C}^\times$, we have
\begin{equation}    \label{XY}
\langle X_i(z),Y_{j,a} \rangle = \exp \left(\sum_{r > 0}
        z^r a^{-r}
\frac{\wt{C}_{i,j}(q^r)}{r}\right),
\end{equation}
\begin{equation}    \label{XA}
  \langle X_i(z),A_{j,a} \rangle = (1 - za^{-1})^{-\delta_{i,j}}.
\end{equation}
In particular, if $m = \prod_{j\in I,
    b\in\CC^\times} Y_{j,b}^{u_{j,b}(m)}$, where
  $u_{j,b}(m) \in \Z_{\geq 0}$, then
\begin{equation}    \label{Xizm}
\langle X_i(z),m \rangle = \prod_{j\in I,b\in\CC^\times} 
\exp \left(u_{j,b}(m)\sum_{r > 0} z^r b^{-r}
\frac{\wt{C}_{i,j}(q^r)}{r}\right).
\end{equation}

Now consider the monomials appearing in the $q$-character of the
simple $U_q(\wh{\mathfrak{g}})$-module $L(m)$. According to \cite{Fre2},
they all have the form
		$$M = m \cdot A_{i_1,a_1}^{-1}A_{i_2,a_2}^{-1}\cdots A_{i_N,a_N}^{-1},$$
where $i_1,\cdots, i_N\in I$ and $a_1,\cdots, a_N\in\CC^\times$. From
the above formulas it follows that
$$\langle X_i(z),M \rangle = \langle X_i(z),m \rangle \cdot
\prod_{1\leq k\leq N, i_k = i} (1 - z a_k^{-1}).$$

\subsection{$W$-invariance of the pairing}

Let us introduce the following series for $i\in I$:
$$U_i(z) = \text{exp}\left( \sum_{m > 0} z^{-m} \frac{h_{i,-m}}{[m]_{q_i}}  \right)$$
so that we have
$$\phi_i^-(z) = k_i^{-1}\frac{U_i(zq_i)}{U_i(zq_i^{-1})}.$$

\begin{rem} A series $\mathcal{P}_i(z)$ of Cartan-Drinfeld elements is defined in \cite[Lemma 12.2.7]{CP} in a different way, but it 
satisfies $\phi_i^-(z) = k_i^{-1}\mathcal{P}_i^-(q_i^{-2}z)/\mathcal{P}_i^-(z)$ and $\mathcal{P}_i(0) = 1$. 
And so $\mathcal{P}_i^-(z) = (U_i(zq_i))^{-1}$.
\end{rem}

Then we have
$$U_i(z) = \prod_{j\in I} \text{exp}\left( \sum_{m > 0} z^{-m} C_{j,i}(q^m)\frac{[d_j]_q [m]_{q_j}}{[d_i]_q [m]_{q_i}}  \frac{\wt{h}_{j,-m}}{[d_j]_q[m]_{q_j}}  \right).$$
But 
$$C_{j,i}(q^m)\frac{[d_j]_q[m]_{q_j}}{[d_i]_q [m]_{q_i}} = C_{i,j}(q^m)\frac{[d_i]_{q^m}}{[d_j]_{q^m}} \frac{[d_j]_q[m]_{q_j}}{[d_i]_q [m]_{q_i}} = C_{i,j}(q^m).$$
Hence
\begin{multline}\label{uiz}U_i(z^{-1}) = \\ X_i(zq_i)X_i(zq_i^{-1})\prod_{j,C_{i,j} = -1}X_j(z)\prod_{j,C_{i,j} = -2}X_j(zq^{-1})X_j(zq)\prod_{j,C_{i,j} = -3}X_j(zq^{-2})X_j(z)X_j(zq^2).\end{multline}

\begin{rem} Note that in comparison to (\ref{ay}), the coefficients $C_{i,j}$ appear, not $C_{j,i}$. However, this is not the expression of the 
root monomials $A_{i,a}$ for the Langlands dual Lie algebra. Rather,
it can be identified with the expression of the variables $\Lambda_{i,a}$ in \cite[Section 9.2]{Hsh}.\end{rem}

\begin{example} In the case $\g = sl_2$, we have $U_1(z) = \on{exp}\left(
    \sum_{m > 0} z^{-m} \frac{h_{1,-m}}{[m]_q}\right)$.\end{example}

Now we define an automorphism $T_i$ of $U_q(\wh{\Hlie})$, and hence of
$U_q(\wh{\Hlie})[[z^{\pm 1}]]$, by setting
$$T_i(\wt{h}_{j,m}) = \wt{h}_{j,m} - \delta_{i,j} q_i^{-m} [d_i]_q h_{i,m}\text{ and }T_i(\wt{k}_j) = \wt{k}_jk_i^{-\delta_{i,j}}$$
for $i,j\in I$ and $m\in\mathbb{Z}\setminus\{0\}$, so that 
\begin{equation}    \label{TY}
  T_i(Y_j(z)) = Y_j(z) (\phi_i^-(z^{-1}q_i^{-1}))^{-\delta_{i,j}}.
\end{equation}
 This implies that the homomorphism $\mathcal{I}$ given by
  formula \eqref{identi} is $W$-equivariant. It also follows that
$$T_i(X_j(z)) = X_j(z) U_i(z^{-1}q_i^{-1})^{-\delta_{i,j}}.$$
The results cited above imply that the operators $T_i, i \in I$, satisfy the
braid relations, and therefore we can define operators $T_w$ for $w\in
W$ by using a reduced decomposition of $w$.

%\begin{rem} $T_i$ can not be extended to the whole quantum affine algebra (or its Borel subalgebra). For instance, in 
%the $sl_2$-case, for $V$ a fundamental representation, $T(\chi_q(V))$, or $T^{-1}(\chi_q(V))$, is not the $q$-character
%of a representation.
%\end{rem}

\begin{prop}\label{equith} The pairing $\langle \cdot,\cdot \rangle$ is
  $W$-invariant in the following sense:
  $$\langle T_w(h(z)),T_w(m) \rangle = \langle h(z),m \rangle$$
for all $w\in W$, $h(z)\in U_q(\wh{\Hlie})[[z]]$ and
$m\in\mathcal{M}$.
\end{prop}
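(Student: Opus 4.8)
The plan is to reduce the general statement to the case of simple reflections $T_i$ and then verify it by a direct computation on generators. Since $T_w = T_{i_1} \cdots T_{i_k}$ for a reduced decomposition and both sides of the desired identity are built multiplicatively, it suffices to check that $\langle T_i(h(z)), T_i(m) \rangle = \langle h(z), m \rangle$ for each $i \in I$; the general case then follows by composing these identities along the reduced word. (One should note in passing that the result is independent of the choice of reduced word because the $T_i$ satisfy the braid relations on $U_q(\wh{\Hlie})[[z]]$ and on $\mathcal M$, as recalled just above the statement.)

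For a fixed $i$, I would exploit the multiplicativity properties in \eqref{mult}: both the pairing $\langle \cdot, \cdot \rangle$ and the braid operator $T_i$ are compatible with the group structure on $\mathcal M$ (on the $\mathcal M$-side, $T_i$ is a group homomorphism of monomials; on the $U_q(\wh{\Hlie})[[z]]$-side, $\langle h_{j,m}, - \rangle$ is additive and $\langle k_j, - \rangle$, $\langle X_j(z), - \rangle$, $\langle Y_j(z),-\rangle$ are multiplicative in the monomial argument). Hence it is enough to check the identity when $m = Y_{j,a}$ ranges over the generators of $\mathcal M$ and when $h(z)$ ranges over a generating family of $U_q(\wh{\Hlie})[[z]]$ for the pairing — concretely the $h_{j,m}$, the $k_j$, and (equivalently, and more convenient here) the series $X_j(z)$, $Y_j(z)$.

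The core computation is then: on one hand $T_i(Y_{j,a}) = Y_{j,a} A_{i,aq_i}^{-\delta_{i,j}}$, and on the other hand, from the formulas recalled just before the Proposition,
\[
T_i(X_j(z)) = X_j(z)\, U_i(z^{-1}q_i^{-1})^{-\delta_{i,j}}, \qquad
T_i(Y_j(z)) = Y_j(z)\,(\phi_i^-(z^{-1}q_i^{-1}))^{-\delta_{i,j}}.
\]
So I must verify, say,
\[
\langle X_j(z)\, U_i(z^{-1}q_i^{-1})^{-\delta_{i,j}},\ Y_{k,a} A_{i,aq_i}^{-\delta_{i,k}} \rangle
= \langle X_j(z), Y_{k,a} \rangle
\]
for all $j,k,i$, and similarly with $Y_j(z)$ in place of $X_j(z)$. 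Expanding the left side using the additivity of $\langle X_j(z), - \rangle$ (from \eqref{mult}) gives
$\langle X_j(z), Y_{k,a}\rangle \cdot \langle X_j(z), A_{i,aq_i}\rangle^{-\delta_{i,k}} \cdot \langle U_i(z^{-1}q_i^{-1}), Y_{k,a}\rangle^{-\delta_{i,j}} \cdot \langle U_i(z^{-1}q_i^{-1}), A_{i,aq_i}\rangle^{\delta_{i,j}\delta_{i,k}}$,
and the claim is that the last three factors cancel. By \eqref{XA}, $\langle X_j(z), A_{i,aq_i}\rangle = (1 - za^{-1}q_i^{-1})^{-\delta_{i,j}}$; and using \eqref{uiz} together with \eqref{XY}–\eqref{XA} one computes $\langle U_i(z^{-1}q_i^{-1}), Y_{k,a}\rangle$ and $\langle U_i(z^{-1}q_i^{-1}), A_{i,aq_i}\rangle$ explicitly in terms of the Cartan matrix entries. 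The bookkeeping here — matching spectral-parameter shifts $q_i^{\pm 1}$, $q^{\pm 1}$, $q^{\pm 2}$, $q^{\pm 3}$ coming from \eqref{uiz} against those in \eqref{Ai}/\eqref{ay} — is the main obstacle, and is essentially where the Langlands-type transposition of $C_{i,j}$ versus $C_{j,i}$ (flagged in the Remark after \eqref{uiz}) must be handled with care; the non-simply-laced cases $C_{i,j} \in \{-2,-3\}$ require checking each shift separately. Once the $j = i = k$ (and mixed) cases are done, the identity follows for all monomials and all $h(z)$ by multiplicativity, and then for all $w$ by iterating over a reduced word.
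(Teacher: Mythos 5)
Your reduction --- first to simple reflections via a reduced word, then via the multiplicativity \eqref{mult} to the case $m=Y_{k,a}$ with $h(z)$ a generator --- is exactly the skeleton of the paper's proof, and the identity $\langle T_i(X_j(z)),T_i(Y_{k,a})\rangle=\langle X_j(z),Y_{k,a}\rangle$ you isolate is the right thing to check. The difference is where the final computation is done. The paper pairs the additive generators $\wt{h}_{i,-m}$ (together with $k_i^{\pm1}$, handled by \eqref{pairi} and $W$-invariance of $(\cdot,\cdot)$) against $Y_{j,a}$: since $T_k(\wt{h}_{i,-m})=\wt{h}_{i,-m}-\delta_{i,k}q_i^{m}[d_i]_q h_{i,-m}$ and both $\langle\wt{h}_{i,-m},A_{j,b}\rangle$ and $\langle h_{i,-m},Y_{j,b}\rangle$ are proportional to $\delta_{i,j}$, the discrepancy between the two sides is a sum of exactly two terms, $-\delta_{k=j=i}(aq_i)^{-m}[d_i]_q[m]_{q_i}/m$ and $+\delta_{k=j=i}q_i^{m}(aq_i^{2})^{-m}[d_i]_q[m]_{q_i}/m$, which cancel on sight; no case analysis over Cartan matrix entries ever appears.

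Your exponentiated version also closes, and more easily than you fear: the $C_{i,j}$-versus-$C_{j,i}$ bookkeeping you flag as the main obstacle only arises if you compute $\langle U_i(z^{-1}q_i^{-1}),\cdot\rangle$ by expanding $U_i$ through \eqref{uiz} and \eqref{XY}. If instead you use the definition $U_i(w)=\exp\bigl(\sum_{m>0}w^{-m}h_{i,-m}/[m]_{q_i}\bigr)$ directly, then $\langle U_i(w),Y_{k,a}\rangle=(1-(wa)^{-1})^{-\delta_{i,k}}$ is diagonal, only the two factors $Y_{i,a}$ and $Y_{i,aq_i^{2}}$ of $A_{i,aq_i}$ in \eqref{Ai} contribute to $\langle U_i(w),A_{i,aq_i}\rangle$, and the three extra factors in your displayed product cancel in one line in each of the cases $j=i=k$, $j=i\neq k$, $j\neq i=k$, $j\neq i\neq k$; the nontrivial cancellation $(1-za^{-1}q_i^{-1})(1-zq_ia^{-1})\cdot(1-zq_ia^{-1})^{-1}(1-za^{-1}q_i^{-1})^{-1}=1$ occurs only for $j=i=k$. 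As written, though, your argument stops short of this verification, which is the entire content of the proposition; you should either carry it out along these lines or switch to the paper's additive generators, where it is immediate.
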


\begin{proof}  If $h(z) = k_i^{\pm 1}$, then
    $W$-invariance follows from formula \eqref{pairi} and
    $W$-invariance of $(\cdot,\cdot)$. So it suffices to check that
  for $i,j,k\in I$ and $a\in\mathbb{C}^\times$, we have
$$\langle T_k(\wt{h}_{i,-m}),T_k(Y_{j,a}) \rangle = \langle
    \wt{h}_{i,-m},Y_{j,a} \rangle.$$
The left hand side is equal to the right hand side $\langle
    \wt{h}_{i,-m},Y_{j,a} \rangle$ plus the sum of the term
$$\langle\wt{h}_{i,-m},A_{j,aq_j}^{-\delta_{k,j}}\rangle =
-\delta_{k = j = i} (a q_i)^{-m}[d_i]_q \frac{[m]_{q_i}}{m}$$
and the term
$$-\delta_{i,k}q_i^{{m}}[d_i]_q\langle h_{i,-m},Y_{j,a}A_{j,aq_j}^{-\delta_{k,j}}\rangle
= \delta_{k = j = i} q_i^{m}[d_i]_q (aq_i^2)^{-m}
\frac{[m]_{q_i}}{m}.$$
This gives us the desired result because the sum of these last two
terms in zero.
\end{proof}

For $w\in W$ and $i\in I$, we introduce the following formal Taylor
power series in $z$:
$$X_{w(\omega_i)}(z) := T_w(X_i(z)).$$
 By Lemma \ref{wn}, $T_w(X_i(z))$ depends only on
  $w(\omega_i) \in P$ (and not
separately on $w \in W$ and $i \in I$), so this notation is
justified. We will call these series the {\em $X$-series}.

\begin{example}\label{ttild} (i) For $\Glie = B_2$, we have 
$$X_{s_2(\omega_1)}(z) = X_1(z)
\text{ and }X_{s_2(\omega_2)}(z) = X_2^{-1}(zq^{2})X_1(zq^{2})X_1(z).$$

(ii) More generally, for a simple reflection $s_i$, one has 
$$X_{s_i(\omega_j)}(z) = X_i(z)\text{ if $j\neq i$,}$$
$$X_{s_i(\omega_i)}(z) = X_i(z)U_i(z^{-1}q_i^{-1})^{-1} = (X_i(zq_i^2))^{-1} \left(\prod_{j\in I,C_{i,j} = -1}X_j(zq_i)\right)
\left(\prod_{j\in I, C_{i,j} = -2}X_j(zq^{2})X_j(z)\right)$$
$$\times \left(\prod_{j\in I, C_{i,j} = -3}X_j(zq^{3})X_j(zq)X_j(zq^{-1})\right).$$
\end{example}

\begin{prop}\label{taw} For $i,j\in I$, $a\in\mathbb{C}^\times$ and
  $w\in W$, we have
$$\langle X_{w(\omega_i)}(z), (A_{k,a}^w)^{-1} \rangle = (1 -
  za^{-1})^{\delta_{i,k}}.$$
Moreover, we have 
$$X_{w(\omega_i)}(z) = \prod_{j\in I, a\in \mathbb{C}^\times} X_j(za)^{m_{j,a}^i}$$
where the $m_{j,a}^i\in\mathbb{Z}$ are defined by
$$A_{i,1} = \prod_{j\in I,a\in\mathbb{C}^\times} (A_{j,a^{-1}}^w)^{m_{i,a}^j}.$$
\end{prop}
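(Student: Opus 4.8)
The plan is to prove Proposition \ref{taw} by reducing everything to the already-established $W$-invariance of the pairing (Proposition \ref{equith}) together with the known $w=e$ formulas \eqref{XA} and \eqref{Xizm}. For the first identity, I would simply compute
$$\langle X_{w(\omega_i)}(z), (A_{k,a}^w)^{-1} \rangle = \langle T_w(X_i(z)), T_w(A_{k,a}^{-1}) \rangle = \langle X_i(z), A_{k,a}^{-1} \rangle = (1 - za^{-1})^{\delta_{i,k}},$$
where the middle equality is Proposition \ref{equith} (applied with $h(z) = X_i(z)$ and $m = A_{k,a}^{-1}$, using that $T_w$ by Lemma \ref{qroot} sends $A_{k,a}^{-1}$ to the monomial $(A_{k,a}^w)^{-1}$), and the last equality is \eqref{XA}. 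So the first part is essentially immediate from the machinery already in place.

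For the second part, the strategy is to exploit the fact that $X_i(z)$ is uniquely determined by its pairings with the simple root monomials. More precisely, by \eqref{fdser}--\eqref{wth}, every $X_j(z)$ lies in the subgroup of $(U_q(\wh{\Hlie})[[z]])^\times$ generated by the exponentials of the $\wt{h}_{j,-m}$, and the homomorphism $\mathcal{I}$ of \eqref{identi} identifies $\mathcal{M}$ with the corresponding group of $Y$-series; since $\mathcal{I}(A_{i,a}) = \phi_i^-(z^{-1}a^{-1})$ and the $A_{i,a}$ are algebraically independent, the root series $\mathcal{I}(A_{i,a})$ generate a free abelian subgroup, and the multiplicativity \eqref{mult} of the pairing means that an element of this group is detected by its pairing against the $A_{k,b}$. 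Concretely, $X_{w(\omega_i)}(z) = T_w(X_i(z))$; I would write $X_i(z)$ (for the case $w=e$) as a product $\prod_{j,b} X_j(zb)^{?}$ — but it is cleaner to observe that both sides of the claimed identity lie in the group $G$ generated by the $X_j(za)$, $j \in I$, $a \in \C^\times$, which is free abelian with basis $\{X_j(za)\}$ because the $\wt{h}_{j,-m}$ are algebraically independent. Within $G$, an element is determined by the exponents, so it suffices to match exponents. The exponent of $X_j(za)$ in $X_{w(\omega_i)}(z)$ is whatever integer makes $\langle X_{w(\omega_i)}(z), A_{k,b}^{-1}\rangle$ come out right for all $k,b$, by \eqref{XA}; and the definition $A_{i,1} = \prod_{j,a}(A_{j,a^{-1}}^w)^{m_{i,a}^j}$ together with the first part of the proposition gives exactly
$$\langle \prod_{j,a} X_j(za)^{m_{j,a}^i}, A_{k,b}^{-1}\rangle = \prod_{j,a}(1 - za b^{-1})^{m_{j,a}^i \delta_{j,k}},$$
which I must check equals $\langle X_{w(\omega_i)}(z), A_{k,b}^{-1}\rangle$. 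Applying $T_w^{-1}$ and Proposition \ref{equith} once more, the latter equals $\langle X_i(z), T_w^{-1}(A_{k,b}^{-1})\rangle$, and $T_w^{-1}(A_{k,b})$ expands in the $A_{j,a}$ with exponents governed precisely by the relation defining the $m_{i,a}^j$ (this is the content of Lemma \ref{qroot} inverted); then \eqref{XA} and \eqref{Xizm} finish the bookkeeping.

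The main obstacle I anticipate is the indexing/spectral-parameter bookkeeping in this last comparison: one has to be careful that the substitution conventions ($z \mapsto za$, $a \mapsto a^{-1}$, the $q_i$-shifts hidden in $\phi_i^-$, and the inversion in $\sigma$) all line up so that the exponents $m_{j,a}^i$ produced by expanding $T_w^{-1}(A_{k,b})$ in the $A$-basis agree with those appearing in the product $\prod_j X_j(za)^{m_{j,a}^i}$. Everything is forced — there is no genuine difficulty of principle, since algebraic independence of the $\wt{h}_{j,-m}$ guarantees that matching the pairings with all $A_{k,b}^{-1}$ suffices — but the proof should be organized as: (1) record that $X_{w(\omega_i)}(z)$ lies in the free abelian group $G$ on the $X_j(za)$; (2) prove the first displayed identity via Proposition \ref{equith} and \eqref{XA}; (3) define $m_{j,a}^i$ from the $A$-expansion of $T_w^{-1}(A_{i,1})$ and verify, using \eqref{XA}, \eqref{Xizm} and Proposition \ref{equith} again, that $\prod_j X_j(za)^{m_{j,a}^i}$ has the same pairing with every $A_{k,b}^{-1}$ as $X_{w(\omega_i)}(z)$; (4) conclude by freeness of $G$ and multiplicativity \eqref{mult} that the two elements of $G$ coincide.
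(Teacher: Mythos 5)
Your proposal is correct and follows essentially the same route as the paper: the first identity is obtained exactly as you describe from Proposition \ref{equith} and formula \eqref{XA}, and the second part is proved by writing $X_{w(\omega_i)}(z)$ as a product of the $X_j(za)$, expanding $A_{k,b}^{-1}$ in the $(A_{l,d}^w)^{-1}$, and matching the two resulting product formulas for the pairing $\langle X_{w(\omega_i)}(z), A_{k,b}^{-1}\rangle$ to identify the exponents. The extra justifications you supply (freeness of the group generated by the $X_j(za)$ via algebraic independence of the $\wt{h}_{j,-m}$, and detectability of an element of that group by its pairings) are left implicit in the paper but are exactly the right way to make its computation rigorous.
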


\begin{proof} By $W$-invariance of the pairing and
    formula \eqref{XA}, we have 
$$(X_{w(\omega_i)}(z), (A_{k,a}^w)^{-1}) = \langle X_i(z),A_{k,a}^{-1}
  \rangle = (1 - z a^{-1})^{\delta_{i,k}}.$$
Next, we write 
$$X_{w(\omega_i)}(z) = \prod_{j\in I, a\in \mathbb{C}^\times} X_j(za)^{m_{j,a}^i}$$
where $m_{j,a}^i\in\mathbb{Z}$. Let us write for $k\in I$
$$A_{k,1}^{-1} = \prod_{l\in I,d\in\mathbb{C}^\times} (A_{l,d^{-1}}^w)^{-w_{l,d}^k},$$
where $w_{l,d}^k\in\mathbb{Z}$. Then
$$ \langle X_{w(\omega_i)}(z), A_{k,1}^{-1} \rangle
=  \prod_{d\in\mathbb{C}^\times} (1 - zd^{-1})^{w_{i,d}^k} 
=  \prod_{a\in \mathbb{C}^\times} (1 - za^{-1})^{m_{k,a}^i}.$$
This implies that $w_{i,d}^{k} = m_{k,d}^i$.
\end{proof}

We also define
$$Y_{w(\omega_i)}(z) := T_w(Y_i(z)) = T_w(\overline{k}_i^{-1})
\frac{X_{w(\omega_i)}(zq_i^{-1})}{X_{w(\omega_i)}(zq_i)} 
=  T_w(\overline{k}_i^{-1})  \prod_{j\in I, a\in\mathbb{C}^\times}
\frac{(X_j(z a q_i^{-1}))^{m_{j,a}^i}}{(X_j(zaq_i))^{m_{j,a}^i}}.$$

Recall the injective group homomorphism $\mathcal{I}$ given by formula
(\ref{identi}). The next lemma follows from the
  $W$-equivariance of $\mathcal{I}$ and formulas \eqref{TY1},
  \eqref{TY}.

\begin{lem}\label{readpow} For any $i\in I$, we have 
\begin{equation}    \label{YY}
  \mathcal{I}(Y_{w(\omega_i),a}) = Y_{w(\omega_i)}(za).
\end{equation}
\end{lem}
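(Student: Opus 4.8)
The plan is to reduce everything to the case $w=e$, where the statement $\mathcal{I}(Y_{i,a}) = Y_i(za)$ is the definition of $\mathcal{I}$ given in formula \eqref{identi}, together with the definition $Y_{\omega_i}(z) = Y_i(z)$. First I would recall two ingredients already established in the excerpt: (a) the braid group action $T_w$ acts compatibly on the monomial group $\mathcal{M}$ (via Chari's $T_i$ from \eqref{TY1}) and on $U_q(\wh{\Hlie})[[z]]$ (via the $T_i$ defined before Proposition~\ref{equith}, acting on the $\wt h_{j,m}$); and (b) the assertion, stated right after formula \eqref{TY}, that $\mathcal{I}$ is $W$-equivariant, i.e. $\mathcal{I}\circ T_i = T_i\circ \mathcal{I}$ on $\mathcal{M}$ for each $i\in I$. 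The key point is that both sides of \eqref{YY} are obtained by applying $T_w$ to the respective sides of the $w=e$ identity $\mathcal{I}(Y_{i,a}) = Y_i(za)$: the left side is $\mathcal{I}(T_w(Y_{i,a})) = \mathcal{I}(Y_{w(\omega_i),a})$ by the definition $Y_{w(\omega_i),a} := T_w(Y_{i,a})$ of Section~\ref{secextm}, and the right side is $T_w(Y_i(za)) = Y_{w(\omega_i)}(za)$ by the definition $Y_{w(\omega_i)}(z) := T_w(Y_i(z))$ introduced just above the lemma.

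So the proof would proceed as follows. Fix a reduced decomposition $w = s_{i_1}\cdots s_{i_k}$, so $T_w = T_{i_1}\cdots T_{i_k}$ on both $\mathcal{M}$ and on $U_q(\wh{\Hlie})[[z]]$. By $W$-equivariance of $\mathcal{I}$ (applied $k$ times), $\mathcal{I}(T_w(m)) = T_w(\mathcal{I}(m))$ for every $m\in\mathcal{M}$; in particular $\mathcal{I}(T_w(Y_{i,a})) = T_w(\mathcal{I}(Y_{i,a})) = T_w(Y_i(za))$. By the two definitions just recalled, the left side equals $\mathcal{I}(Y_{w(\omega_i),a})$ and the right side equals $Y_{w(\omega_i)}(za)$, which is exactly \eqref{YY}. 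One should also note that the statement is independent of the reduced decomposition chosen, which holds because the $T_i$ satisfy the braid relations on both sides (stated in the excerpt for the monomial action via \cite{C}, and for the Cartan-Drinfeld action in the sentence preceding Proposition~\ref{equith}), and because the notations $Y_{w(\omega_i),a}$ and $Y_{w(\omega_i)}(z)$ have already been checked to depend only on $w(\omega_i)$ (using Lemma~\ref{wn}).

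The only real content is the $W$-equivariance $\mathcal{I}\circ T_i = T_i\circ\mathcal{I}$, and I expect this to be the main (and essentially the sole) obstacle. It should follow from comparing the action of $T_i$ on generators: on the one hand, $T_i(Y_{j,a}) = Y_{j,a}A_{j,aq_j}^{-\delta_{i,j}}$ from \eqref{TY1}; on the other, $T_i(Y_j(z)) = Y_j(z)(\phi_i^-(z^{-1}q_i^{-1}))^{-\delta_{i,j}}$ from \eqref{TY}. Applying $\mathcal{I}$ and using $\mathcal{I}(A_{j,b}) = \phi_j^-(z^{-1}b^{-1})$ (derived from \eqref{ay} just after formula \eqref{identi}), we get $\mathcal{I}(T_i(Y_{j,a})) = Y_j(za)\,\mathcal{I}(A_{j,aq_j})^{-\delta_{i,j}} = Y_j(za)(\phi_i^-(z^{-1}q_i^{-1}a^{-1}))^{-\delta_{i,j}}$, while $T_i(\mathcal{I}(Y_{j,a})) = T_i(Y_j(za))$; substituting $z\mapsto za$ in \eqref{TY} shows these agree. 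Since $\mathcal{I}$ is a group homomorphism and $\mathcal{M}$ is generated by the $Y_{j,a}$, equality on generators gives equality of the automorphisms $\mathcal{I}\circ T_i$ and $T_i\circ\mathcal{I}$, completing the argument. (If this equivariance has already been invoked verbatim in the text, as the sentence after \eqref{TY} suggests, then the lemma is immediate and this verification can be compressed to a single line citing that remark.)
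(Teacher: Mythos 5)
Your proposal is correct and follows exactly the route the paper takes: the paper states that the lemma ``follows from the $W$-equivariance of $\mathcal{I}$ and formulas \eqref{TY1}, \eqref{TY}'', and your argument simply spells out that equivariance on the generators $Y_{j,a}$ (via $\mathcal{I}(A_{j,b}) = \phi_j^-(z^{-1}b^{-1})$) and then applies $T_w$ to the $w=e$ identity. The extra care you take about independence of the reduced decomposition is already covered in the text by the braid relations and Lemma~\ref{wn}, so nothing is missing.
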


Next, we compute the series $X_{w(\omega_i)}(z)$. In
  order to do this, note that $X_{w(\omega_i)}(z)$ is
characterized by the formula
$$Y_{w(\omega_i)}(z) = T_w(\overline{k}_i^{-1}) 
X_{w(\omega_i)}(zq_i^{-1})/X_{w(\omega_i)}(zq_i),$$
which is analogous to the formula
$$
\sigma(Y_{w(\omega_i),a^{-1}}) = [w(\omega_i)]
\Psib_{w(\omega_i),aq_i^{-1}}/\Psib_{w(\omega_i),aq_i},
$$
where $\sigma$ (as defined by formula \ref{sigm}) satisfies $\sigma(Y_{j,b}) =
Y_{j,b^{-1}}$, (see \cite[Proposition 4.5]{FH4}).
The $\ell$-weights $\Psib_{w(\omega_i),a}$ were defined in \cite{FH4}
(this definition will be recalled in Section \ref{defipsiw}).

Comparing these formulas and using Lemma \ref{readpow}, we obtain the
following result. Let ${\mc
  M}'$ be the group of monomials in $\Yim' = \ZZ[\Psib_{i,a}]_{i \in
  I, a \in \C^\times}$.

\begin{prop}\label{tpsi}
  (1) The homomorphism $\mathcal{I}$ extends uniquely to a homomorphism
\begin{equation}\label{identip}\mathcal{I}' : \mathcal{M}' \rightarrow
  (U_q(\wh{\Hlie})[[z]])^\times\end{equation}
such that
\begin{equation}\label{extia}
  {\mc I}'(\Psib_{i,a}) = X_i(za^{-1}) \qquad i\in I,
  a\in\mathbb{C}^\times.
\end{equation}

(2) For $i\in I$, $a\in \mathbb{C}^\times$ and $w\in W$, we have 
$$\mathcal{I}'(\Psib_{w(\omega_i),a}) = X_{w(\omega_i)}(za^{-1}).$$
\end{prop}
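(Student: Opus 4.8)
The plan is to prove Proposition \ref{tpsi} in two steps, first establishing the extension in part (1) and then deriving part (2) as a formal consequence of the $W$-equivariance already developed. For part (1), recall that $\mathcal{I} : \mathcal{M} \to (U_q(\wh{\Hlie})[[z]])^\times$ is the injective group homomorphism with $\mathcal{I}(Y_{i,a}) = Y_i(za)$, and that $\mathcal{M}' $ is the group of monomials in $\Yim' = \Z[\Psib_{i,a}^{\pm1}]_{i\in I, a\in\C^\times} \otimes_\Z \Z(P)$, which under the embedding $Y_{i,a} = [\omega_i]\Psib_{i,aq_i^{-1}}\Psib_{i,aq_i}^{-1}$ contains $\mathcal{M}$. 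The key point is that the series $X_i(za^{-1})$, $i \in I$, $a \in \C^\times$, are algebraically independent in $U_q(\wh{\Hlie})[[z]]$ (this follows from formula \eqref{XY}, since the exponents $\wt C_{i,j}(q^r)$ are invertible as a matrix over $\C(q)$, so the $X_i(za)$ generate the same multiplicative subgroup as the $Y_i(za)$ up to the $\overline k_i$ factors); therefore there is a \emph{unique} group homomorphism from the free abelian group on the symbols $\Psib_{i,a}$ sending $\Psib_{i,a} \mapsto X_i(za^{-1})$. What must be checked is that this is compatible with $\mathcal{I}$ on the overlap $\mathcal{M} \subset \mathcal{M}'$: using the defining relation $Y_i(z) = \overline k_i^{-1} X_i(zq_i^{-1})/X_i(zq_i)$ together with $Y_{i,a} = [\omega_i]\Psib_{i,aq_i^{-1}}\Psib_{i,aq_i}^{-1}$ and $\mathcal{I}'([\omega_i]) := T_e(\overline k_i^{-1})$-type assignment (more precisely, $\mathcal{I}'$ should send $[\omega]$ to the appropriate product of $\overline k_j$'s consistent with $\prod_j \overline k_j^{C_{j,i}} = k_i$), one gets $\mathcal{I}'(Y_{i,a}) = \overline k_i^{-1} X_i(zaq_i^{-1})/X_i(zaq_i) = Y_i(za) = \mathcal{I}(Y_{i,a})$, as required. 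Uniqueness of the extension is immediate since $\mathcal{M}'$ is generated by the $\Psib_{i,a}$ and the $[\omega]$, on which $\mathcal{I}'$ is forced.

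For part (2), I would argue purely formally. On the one hand, by definition $X_{w(\omega_i)}(z) = T_w(X_i(z))$, and the operators $T_w$ on $U_q(\wh{\Hlie})[[z^{\pm1}]]$ were constructed earlier (via formula \eqref{TY} and its $X$-analogue $T_i(X_j(z)) = X_j(z)U_i(z^{-1}q_i^{-1})^{-\delta_{i,j}}$) so that $\mathcal{I}$, and hence $\mathcal{I}'$, is $W$-equivariant: that is, $\mathcal{I}'\circ T_w = T_w \circ \mathcal{I}'$ where on the left $T_w$ denotes the braid operator on $\mathcal{Y}'$ and on the right the automorphism of $U_q(\wh{\Hlie})[[z^{\pm1}]]$. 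On the other hand, the $\ell$-weight $\Psib_{w(\omega_i),a}$ — whose definition from \cite{FH4} will be recalled in Section \ref{defipsiw} — is by construction $T_w(\Psib_{\omega_i,a}) = T_w(\Psib_{i,a})$ as a monomial in $\mathcal{M}'$ (using that $T_w(\Psib_{i,a})$ depends only on $w(\omega_i)$, which is the content of the well-definedness statement, itself a consequence of Lemma \ref{wn}). Therefore
\begin{equation*}
\mathcal{I}'(\Psib_{w(\omega_i),a}) = \mathcal{I}'(T_w(\Psib_{i,a})) = T_w(\mathcal{I}'(\Psib_{i,a})) = T_w(X_i(za^{-1})) = X_{w(\omega_i)}(za^{-1}),
\end{equation*}
which is exactly the claim. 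The chain of equalities uses, in order, the definition of $\Psib_{w(\omega_i),a}$, the $W$-equivariance of $\mathcal{I}'$, formula \eqref{extia}, and the definition $X_{w(\omega_i)}(z) = T_w(X_i(z))$ together with the substitution $z \mapsto za^{-1}$ commuting with $T_w$ (which only affects the Cartan-Drinfeld generators, not $z$).

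The main obstacle I anticipate is the bookkeeping in part (1): one has to pin down precisely how $\mathcal{I}'$ acts on the $\Z(P)$-factor, i.e. on the elements $[\omega]$, and verify that this is forced by consistency with $\mathcal{I}$ on all of $\mathcal{M}$ (not just on the $Y_{i,a}$). The subtlety is that $\mathcal{M}$ as a subgroup of $\mathcal{M}'$ does not contain the $[\omega_i]$ individually — only combinations like $[\omega_i]\Psib_{i,aq_i^{-1}}\Psib_{i,aq_i}^{-1}$ — so the extension to $[\omega_i]$ genuinely uses new data, namely the $\overline k_i$ (or $T_w(\overline k_i^{-1})$) scalars living in the adjoint form of the algebra. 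Once the normalization convention for $\mathcal{I}'([\omega])$ is fixed to match the $\overline k_i$'s appearing in $Y_i(z) = \overline k_i^{-1} X_i(zq_i^{-1})/X_i(zq_i)$, everything else is a routine check of multiplicativity, and the algebraic independence of the $X_i(za)$ guarantees there is no obstruction to existence or uniqueness. Part (2) should then be essentially formal, modulo having the definition of $\Psib_{w(\omega_i),a}$ from Section \ref{defipsiw} stated in the form $T_w(\Psib_{i,a})$.
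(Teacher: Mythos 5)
Your overall strategy (set up the homomorphism on generators, then transport the statement for $w=e$ along a $W$-equivariance) is in the spirit of what the paper does, but both halves of your argument silently erase the spectral-parameter inversion $a\mapsto a^{-1}$ built into formula \eqref{extia}, and that inversion is exactly where the content of this proposition lies.

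In part (1), your compatibility check does not go through as written. From $\mathcal{I}'(\Psib_{i,b})=X_i(zb^{-1})$ you get $\mathcal{I}'(\Psib_{i,aq_i^{-1}})=X_i\bigl(z(aq_i^{-1})^{-1}\bigr)=X_i(za^{-1}q_i)$, so
$\mathcal{I}'(Y_{i,a})=\mathcal{I}'([\omega_i])\,X_i(za^{-1}q_i)/X_i(za^{-1}q_i^{-1})$,
which is \emph{not} $\overline{k}_i^{-1}X_i(zaq_i^{-1})/X_i(zaq_i)=Y_i(za)$: the exponential forms differ by $a^{-m}\leftrightarrow -a^{m}$ in every Fourier mode, and no choice of $\mathcal{I}'([\omega_i])$ fixes this. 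Your computation would be valid only for the assignment $\Psib_{i,b}\mapsto X_i(zb)$, which is not what \eqref{extia} says. The discrepancy is mediated by the involution $\sigma$ of \eqref{sigm}, and this is precisely why the paper phrases the matching through the identity $\sigma(Y_{w(\omega_i),a^{-1}})=[w(\omega_i)]\,\Psib_{w(\omega_i),aq_i^{-1}}/\Psib_{w(\omega_i),aq_i}$ rather than through \eqref{YPsi} directly.

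In part (2), the first link of your chain of equalities is false: by the proposition recalled in Section \ref{defipsiw} from \cite{FH4}, one has $\Psib_{w(\omega_i),a}=\sigma\circ T'_w\circ\sigma(\Psib_{i,a})$, \emph{not} $T'_w(\Psib_{i,a})$. Concretely, $T'_i(\Psib_{i,a})=\sigma(\wt{\Psib}_{i,a^{-1}q_i^{-2}}^{-1})$ whereas $\Psib_{s_i(\omega_i),a}=\wt{\Psib}_{i,aq_i^{-2}}$; these do not coincide. Consequently the intertwining relation $\mathcal{I}'\circ T'_w=T_w\circ\mathcal{I}'$ that you invoke is neither established in the paper nor correct without a $\sigma$-correction (what the paper proves, via Lemma \ref{readpow}, is equivariance of $\mathcal{I}$ for $T_w$ acting on $\mathcal{M}\subset\mathcal{M}'$, which does not immediately give equivariance of $\mathcal{I}'$ for $T'_w$ on the $\Psib$-generators). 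The paper's actual argument avoids both issues: it characterizes $X_{w(\omega_i)}(z)$ as the unique Taylor series with constant term $1$ satisfying $Y_{w(\omega_i)}(z)=T_w(\overline{k}_i^{-1})\,X_{w(\omega_i)}(zq_i^{-1})/X_{w(\omega_i)}(zq_i)$, characterizes $\Psib_{w(\omega_i),a}$ by the analogous ratio identity with $\sigma(Y_{w(\omega_i),a^{-1}})$ on the left, and matches the two sides using $\mathcal{I}(Y_{w(\omega_i),a})=Y_{w(\omega_i)}(za)$; the $\sigma$ and the $a^{-1}$ appearing there are exactly what reconcile the inversion in \eqref{extia}. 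To repair your proof you would need to reinstate these $\sigma$-twists at both places, at which point it essentially becomes the paper's argument.
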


This means that the powers of $X_j(zb)$ in the factorization of 
$X_{w(\omega_i)}(z)$ are the same as the powers of the
$\Psib_{j,b^{-1}}$ in the factorization of
$\Psib_{w(\omega_i),1}$. The latter are given in Section
\ref{defipsiw}. This implies the following.

\begin{prop}\label{factW}
The series $X_{w(\omega_i)}(z)$ are obtained from the
factorization of $Y_{w(\omega_i),1}$ in the variables $Y_{k,a}^{\pm 1}$ by replacing 

$Y_{k,a}$ by $X_k(za)$ if $d_i = d_k$, 

$Y_{k,a}$ by $X_k(zaq)X_k(zaq^{-1})$ if $C_{i,k} = -2$,

$Y_{k,a}$ by $X_k(zaq^{-2})X_k(za)X_k(zaq^2)$ if $C_{i,k} = -3$,

$W_{k,a}$ by $X_k(za)$ if $C_{k,i} < - 1$, where $W_{i,a}$ is given by
(\ref{wia}).
\end{prop}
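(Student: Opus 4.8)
The plan is to obtain the statement from Proposition~\ref{tpsi}(2), which gives $X_{w(\omega_i)}(z) = \mathcal{I}'(\Psib_{w(\omega_i),1})$, together with the explicit factorization of the $\ell$-weight $\Psib_{w(\omega_i),1}$ into the prefundamental $\ell$-weights $\Psib_{j,b}$ that will be recalled in Section~\ref{defipsiw}. Since $\mathcal{I}'$ is a group homomorphism with $\mathcal{I}'(\Psib_{j,b}) = X_j(zb^{-1})$ by \eqref{extia}, writing $\Psib_{w(\omega_i),1} = \prod_{j\in I,\,b\in\C^\times}\Psib_{j,b}^{\,c_{j,b}}$ gives immediately $X_{w(\omega_i)}(z) = \prod_{j\in I,\,b\in\C^\times} X_j(zb^{-1})^{\,c_{j,b}}$. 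Thus the entire content of the proposition reduces to the combinatorial identity that this $\Psib$-factorization of $\Psib_{w(\omega_i),1}$ is exactly the one produced by the stated recipe applied to the known factorization of $Y_{w(\omega_i),1} = T_w(Y_{i,1})$ into the variables $Y_{k,a}^{\pm1}$ and the auxiliary variables $W_{k,a}^{\pm1}$ (see \eqref{wia}), with $\Psib_{j,b}$ and $X_j(zb^{-1})$ playing parallel roles.

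To establish that identity I would argue by induction on $\ell(w)$, the base case $w = e$ being trivial since $Y_{\omega_i,1} = Y_{i,1}$, $\Psib_{\omega_i,1} = \Psib_{i,1}$, $X_{\omega_i}(z) = X_i(z)$. For the inductive step one tracks, for a single braid generator $T_i$ (resp.\ $T'_i$), how it transforms the factorization on the monomial group of $\Yim$ and on the monomial group of $\Yim'$ of Section~\ref{genchari}: on $\Yim$ it multiplies a $Y$-generator by a shifted $A_{i,\bullet}^{-1}$, equivalently by a shifted $\phi_i^-$ through \eqref{Ai} and \eqref{ay}, while on $\Yim'$ it multiplies a $\Psib$-generator by a shifted $\wt{\Psib}_i$ of \eqref{psfh2}; these two are matched by the dictionary $Y_{i,a} = [\omega_i]\Psib_{i,aq_i^{-1}}\Psib_{i,aq_i}^{-1}$ of \eqref{YPsi}, and on the level of series by the $W$-equivariance of $\mathcal{I}$ and $\mathcal{I}'$ (Lemma~\ref{readpow} and Proposition~\ref{tpsi}(1)) together with the passage from formula \eqref{ay} for $\phi_i^-(z^{-1})$ in the $Y_j(z)$ to formula \eqref{uiz} for $U_i(z^{-1})$ in the $X_j(z)$. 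Comparing the explicit factors in \eqref{uiz} against those in \eqref{ay} produces exactly the four substitution rules of the statement, according to whether $d_i = d_k$, $C_{i,k} = -2$, $C_{i,k} = -3$, or $C_{k,i} < -1$ (the last of these being the reason a $W_{k,a}$ must be used as a building block rather than a product of $Y_{k,b}$'s).

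The only real difficulty is the bookkeeping in the non-simply-laced types: one must match the spectral-parameter shifts by $q^{\pm1}$ and $q^{\pm2}$ that occur when $C_{i,k}\in\{-2,-3\}$ exactly against those in \eqref{Ai}, \eqref{psfh2}, \eqref{ay} and \eqref{uiz}, and one must treat correctly the variables $W_{k,a}$, which arise precisely because, unlike in the simply-laced case, the exponents of $Y_{w(\omega_i),1}$ are not determined by $w(\omega_i)$ alone. Throughout, Proposition~\ref{taw} provides a clean consistency check, since it characterizes $X_{w(\omega_i)}(z)$ by the pairings $\langle X_{w(\omega_i)}(z),(A^w_{k,a})^{-1}\rangle = (1-za^{-1})^{\delta_{i,k}}$; combined with the fact that $X_j(z)$ commutes with the $x^\pm_{k,r}$ for $k\neq j$, this forces the identification once it is known on the $\ell$-weight of each monomial occurring in a $q$-character.
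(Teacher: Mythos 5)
Your reduction is exactly the paper's: by Proposition \ref{tpsi} the exponent of $X_j(zb)$ in $X_{w(\omega_i)}(z)$ equals the exponent of $\Psib_{j,b^{-1}}$ in $\Psib_{w(\omega_i),1}$, so the statement follows once one knows the $\Psib$-factorization of $\Psib_{w(\omega_i),1}$. Where you diverge is in how that factorization is obtained. The paper gets it for free: Definition \ref{subs} (recalled from \cite{FH4} in Section \ref{defipsiw}) \emph{defines} $\Psib_{w(\omega_i),1}$ by precisely the four substitution rules applied to the $Y$-factorization of $Y_{w(\omega_i),1}$, and the compatibility of that definition with the braid-group description is a cited result of \cite{FH4}; so the paper's proof is a one-line comparison of Proposition \ref{tpsi} with Definition \ref{subs}. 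You instead propose to re-prove the combinatorial identity from scratch by induction on $\ell(w)$, matching the action of $T_j$ on $Y$-monomials (multiplication by shifted $A_{j,\bullet}^{-1}$, i.e.\ by $\phi_j^-$ via \eqref{ay}) against its action on the $X$-series (multiplication by shifted $U_j^{-1}$ via \eqref{uiz}). This is a legitimate and more self-contained route --- it would in effect reprove the relevant computation of \cite{FH4} --- but be aware that the inductive step is heavier than your sketch suggests: when $C_{i,j}\in\{-2,-3\}$ a single factor $Y_{j,a}$ of $Y_{w'(\omega_i),1}$ corresponds to two or three factors $X_j(zb)$ of $X_{w'(\omega_i)}(z)$, so applying $T_j$ produces \emph{one} new factor $A_{j,aq_j}^{-1}$ on the monomial side but a product of \emph{two or three} shifted $U_j^{-1}$'s on the series side, and one must verify that the substitution rules send the $Y$-expansion of that single $A_{j,aq_j}^{-1}$ to that whole product; this is exactly where the transposition $C_{j,i}\leftrightarrow C_{i,j}$ between \eqref{ay} and \eqref{uiz} is consumed, and where the $W_{k,a}$ blocks become unavoidable. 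You flag this as ``bookkeeping,'' which is fair, but it is the entire mathematical content of the step, and your write-up does not actually carry it out. In short: correct strategy, same first half as the paper, but the second half replaces a citation by a nontrivial induction that is only sketched.
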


\section{Extremal monomial property and polynomiality}\label{tmproof}

In this section, we relate the extremal monomial property of
$q$-characters, Conjecture \ref{mainc} (resp. a weaker property,
Theorem \ref{pma}) to the polynomiality (resp. rationality) of the
eigenvalues of the renormalized $X$-series $X^N_{w(\omega_i)}(z)$
acting on simple finite-dimensional $U_q(\ghat)$-modules. We first
derive certain identities between the eigenvalues of the renormalized
$X$-series on a simple finite-dimensional $U_q(\ghat)$-module
(Corollary \ref{inveigw}). Then we show that our Theorem \ref{pma}
implies rationality of the eigenvalues of $X^N_{w(\omega_i)}(z)$ on
these modules (Theorem \ref{thimp}), from which we derive that the
operator $X^N_{w(\omega_i)}(z)$ itself is an expansion of a rational
function in $z$ (Theorem \ref{rattw}). We conjecture that it is in
fact a polynomial in $z$ (Conjecture \ref{twpol}).  In \cite{FH} we
proved this polynomiality for $w=e$, and in \cite{Z} it was proved for
$w=w_0$. Here, we prove polynomiality of the eigenvalues of
$X^N_{w(\omega_i)}(z)$ when $w$ is a simple reflection (Theorem
\ref{symsimp}). This implies Conjecture \ref{mainc} for simple
reflections.

In the next section (Section \ref{trans}), we will introduce the
generalized Baxter operators $t_{w(\omega_i)}(z,u)$ (they are the dual
versions of the Baxter operators we had previously introduced in
\cite{FH4}) and show that the $X$-series $X_{w(\omega_i)}(z)$ are
certain limits of these operators. Therefore we obtain a link between
polynomiality of the eigenvalues of $X^N_{w(\omega_i)}(z)$ and
polynomiality of the eigenvalues of these Baxter operators (properly
renormalized). This has important consequences for the quantum
integrable models of XXZ type associated to $U_q(\ghat)$; namely, it
allows one to give different descriptions (one for each $w \in W$) of
the spectra of the commuting Hamiltonians in these models in terms of
the polynomial eigenvalues of the renormalized Baxter operators
$t_{w(\omega_i)}(z,u)$. For $w=e$ this description has been proved in
\cite{FH} (see Section \ref{nver}), and for general $w \in W$ this is
our Conjecture \ref{trm}.

\subsection{Rationality of $X^N_{w(\omega_i)}(z)$}

Let $L(m)$ be the simple $U_q(\ghat)$-module with the highest monomial
$m$. Recall that $\langle X_{w(\omega_i)}(z),T_w(m)
  \rangle$ is the eigenvalue of $X_{w(\omega_i)}(z)$ on the $\ell$-weight
corresponding to the monomial $T_w(m)$.
From the $W$-invariance of the pairing $\langle
  \cdot,\cdot \rangle$ (see Proposition \ref{equith}), we obtain the
  following key result.

\begin{thm} For any $i\in I$, the pairing
$\langle X_{w(\omega_i)}(z),T_w(m) \rangle$ is independent of $w\in W$.
\end{thm}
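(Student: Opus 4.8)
The plan is to deduce the statement immediately from the $W$-invariance of the pairing proved in Proposition \ref{equith}. Indeed, by the very definition of the $X$-series we have $X_{w(\omega_i)}(z) = T_w(X_i(z))$, and $X_i(z)$ is a formal power series in $z$ with coefficients in $U_q(\wh{\Hlie})$ — it is the exponential \eqref{fdser} of a series in the commuting Drinfeld--Cartan generators $\wt{h}_{i,-m}$ (equivalently, the $h_{j,-m}$) — so it lies in $U_q(\wh{\Hlie})[[z]]$. Applying Proposition \ref{equith} with $h(z) = X_i(z)$ and the monomial $m$ then yields
\[
\langle X_{w(\omega_i)}(z),\, T_w(m)\rangle
= \langle T_w(X_i(z)),\, T_w(m)\rangle
= \langle X_i(z),\, m\rangle ,
\]
and the right-hand side does not involve $w$ at all. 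This is the entire argument.

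First I would record that the left-hand side makes sense: Chari's operators $T_w$ preserve the group $\mathcal{M}$ of monomials, so $T_w(m)\in\mathcal{M}$, and by Theorem \ref{occurlm} the monomial $T_w(m)$ in fact occurs in $\chi_q(L(m))$ with multiplicity one; hence the associated $\ell$-weight subspace is one-dimensional and $X_{w(\omega_i)}(z)$ acts on it by the well-defined scalar series $\langle X_{w(\omega_i)}(z),T_w(m)\rangle$. The only hypothesis of Proposition \ref{equith} that needs to be checked is the membership $X_i(z)\in U_q(\wh{\Hlie})[[z]]$, which is immediate from \eqref{fdser}. So there is really no obstacle to overcome here: the substantive content has already been carried out in the proof of Proposition \ref{equith} — namely, the cancellation of the two correction terms coming from $T_k(\wt{h}_{i,-m})$ and $T_k(\wt{k}_j)$ — and the present theorem is simply its reformulation as independence of the eigenvalue $\langle X_{w(\omega_i)}(z),T_w(m)\rangle$ of the choice of $w$.

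It is worth recording the common value explicitly: it equals $\langle X_i(z),m\rangle$, so in particular, writing the dominant monomial as $m = \prod_{j\in I,\, b\in\C^\times} Y_{j,b}^{u_{j,b}(m)}$ with $u_{j,b}(m)\in\Z_{\ge 0}$, formula \eqref{Xizm} gives a closed expression for it. (One could also, more laboriously, verify the independence using Proposition \ref{taw} together with the factorization of $T_w(m)$ in the variables $Y_{w(\omega_j),b}$, but this merely re-derives the same identity, and the route through Proposition \ref{equith} is the clean one.)
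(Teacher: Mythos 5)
Your proposal is correct and is exactly the paper's argument: the theorem is stated as an immediate consequence of the $W$-invariance of the pairing (Proposition \ref{equith}), via $\langle X_{w(\omega_i)}(z),T_w(m)\rangle = \langle T_w(X_i(z)),T_w(m)\rangle = \langle X_i(z),m\rangle$. Your additional remarks on well-definedness and the explicit value $f_{i,m}(z)$ match the surrounding discussion in the paper.
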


This implies the following statement.

\begin{cor}\label{inveigw}
The eigenvalue of $X_{w(\omega_i)}(z)$ on the $\ell$-weight
corresponding to the monomial $T_w(m)$ is independent of $w\in W$.
\end{cor}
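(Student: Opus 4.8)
The plan is to deduce the corollary directly from the $W$-invariance of the pairing established in Proposition \ref{equith}. First I would recall that, by the very definition of the pairing $\langle \cdot,\cdot\rangle$, the series $\langle X_{w(\omega_i)}(z), T_w(m)\rangle \in \CC[[z]]$ is the eigenvalue of the operator $X_{w(\omega_i)}(z) = T_w(X_i(z))$ on the $\ell$-weight space of $L(m)$ attached to the monomial $T_w(m)$. By Theorem \ref{occurlm} this $\ell$-weight space occurs with multiplicity one in $\chi_q(L(m))$ (it is spanned by the extremal weight vector $v_w$), so this eigenvalue is a single well-defined element of $\CC[[z]]$, and the assertion of the corollary is precisely that it does not depend on $w$.

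Next I would set $h(z) := X_i(z) \in U_q(\wh{\Hlie})[[z]]$, which is a legitimate element by formula \eqref{fdser} and on which $T_w$ acts by the automorphism introduced just before Proposition \ref{equith}. Applying Proposition \ref{equith} to this $h(z)$ and to the monomial $m$, and using that $X_{w(\omega_i)}(z) = T_w(X_i(z))$ by definition, one obtains
\[
\langle X_{w(\omega_i)}(z), T_w(m)\rangle \;=\; \langle T_w(h(z)), T_w(m)\rangle \;=\; \langle h(z), m\rangle \;=\; \langle X_i(z), m\rangle .
\]
The right-hand side involves neither $w$ nor the operators $T_w$, so the left-hand side is independent of $w\in W$, which proves the corollary. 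Concretely, since $m$ is the highest monomial of $L(m)$, the common value $\langle X_i(z), m\rangle$ is the power series produced by formula \eqref{Xizm} (equivalently, by \eqref{XY} and \eqref{XA}) evaluated on $m$ itself.

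I do not expect a genuine obstacle here: the corollary is a formal consequence of Proposition \ref{equith}, and all the real work — verifying that the operators $T_i$ on $U_q(\wh{\Hlie})$ satisfy the braid relations and that the two correction terms in the proof of Proposition \ref{equith} cancel — has already been carried out. The only point that deserves a moment's care is to check that $X_{w(\omega_i)}(z)$ really lies in $U_q(\wh{\Hlie})[[z]]$, so that the pairing is defined on it: this holds because $X_{w(\omega_i)}(z) = T_w(X_i(z))$ and $T_w$ is an automorphism of $U_q(\wh{\Hlie})[[z]]$ (alternatively, by Proposition \ref{taw} it is a finite product of the invertible Taylor series $X_j(za)^{\pm 1}$, each with constant term $1$). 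Hence the eigenvalue in the statement is well-defined and equals $\langle X_i(z), m\rangle$ regardless of $w$.
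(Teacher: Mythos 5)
Your proposal is correct and follows exactly the paper's route: the corollary is obtained by applying the $W$-invariance of the pairing (Proposition \ref{equith}) with $h(z)=X_i(z)$, giving $\langle X_{w(\omega_i)}(z),T_w(m)\rangle=\langle X_i(z),m\rangle$, which is manifestly independent of $w$. The extra checks you include (multiplicity one of $T_w(m)$ via Theorem \ref{occurlm}, and that $X_{w(\omega_i)}(z)$ lies in $U_q(\wh{\Hlie})[[z]]$) are sound and only make explicit what the paper leaves implicit.
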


Let us denote this eigenvalue by $f_{i,m}(z)$. Thus, we have
$$
\langle X_{w(\omega_i)}(z),T_w(m) \rangle = f_{i,m}(z), \qquad \forall
w \in W.
$$

In \cite{FH}, we computed it in the case $w=e$, and the result is
  the RHS of formula \eqref{Xizm}:
\begin{equation}    \label{fimz}
f_{i,m}(z) = \prod_{j\in I, b\in\mathbb{C}^\times} \text{exp}\left(
u_{j,b}(m)\sum_{r > 0} (zb)^r \frac{\wt{C}_{i,j}(q^r)}{r}
\right),
\end{equation}
where the numbers $u_{j,b}(m) \in \Z_{\geq 0}$ are the
  powers appearing in the formula
  $$
  m = \prod_{j\in I, b\in\CC^\times} Y_{j,b}^{u_{j,b}(m)}.
  $$

Now let us define the  {\em renormalized $X$-series} acting on
  the simple module $L(m)$ by the formula
\begin{equation}    \label{XN}
  X_{w(\omega_i)}^N(z) := (f_{i,m}(z))^{-1} X_{w(\omega_i)}(z).
\end{equation}

 Thus, $X_{w(\omega_i)}^N(z)$ is a formal Taylor power
  series in $z$ of the form
   $$
   X_{w(\omega_i)}^N(z) = \sum_{k\geq 0} X_{w(\omega_i),k}^N \; z^k
   $$
   whose coefficients $X_{w(\omega_i),m}^N$ are elements of the
   commutative algebra $U_q(\wh{\Hlie})$. One can derive from this
   that any finite-dimensional
   $U_q(\ghat)$-module $L(m)$ has a basis (over $\C$) in which
     the matrix of
   the formal power series $X_{w(\omega_i)}^N(z)$ is upper-triangular,
   with the diagonal entries being the eigenvalues of the operator
   $X_{w(\omega_i),m}^N$ acting on $L(m)$.

   \begin{example}
     Here are two examples of such matrices:
     $$\begin{pmatrix}z+z^2 & z^3 \\ 0 & z+z^2 \end{pmatrix} \qquad
     , \qquad
     \begin{pmatrix}z+z^2 & \ds \sum_{k\geq 0} \frac{z^k}{k!}
       \\ 0 & z+z^2 \end{pmatrix}$$
   In both cases, the single eigenvalue is a polynomial in $z$ (namely,
   $z+z^2$). While in the
   first case the matrix itself is also a polynomial in $z$, in
   the second case it is not (not even an expansion of a rational
   function in $z$;  in fact, on the intersection of the
     first row and second column we have the expansion of
     $\on{exp}(z)$, which is {\em not} an expansion of a rational function).

     Here are two more examples:
     $$
     \begin{pmatrix}z+z^2 & \ds \sum_{k\geq 0} z^k & 0 & 0 \\
       \\ 0 & z+z^2 & 0 & 0 \\ 0 & 0 & \sum_{k\geq 0} k z^k & z^5 \\
       0 & 0 & 0 & \sum_{k\geq 0} k z^k\end{pmatrix} \quad , \quad
     \begin{pmatrix}z+z^2 & \ds \sum_{k\geq 0} z^k & 0 & 0 \\
       \\ 0 & z+z^2 & 0 & 0 \\ 0 & 0 & \sum_{k\geq 0} k z^k &
       \sum_{k\geq 0} k! z^k  \\ 
     0 & 0 & 0 & \sum_{k\geq 0} k z^k\end{pmatrix}
     $$
In both cases, the matrix has two eigenvalues, which are
expansions of two rational
functions (namely, $z+z^2$ and $z/(1-z)$). But while in the first case
{\em all} entries of the matrix are expansions of rational functions, in the
second case this is not so (indeed, the entry on the intersection of the
3rd row and the 4th column is {\em not} an expansion of a rational
function).
\end{example}

If all entries of the matrix of a given operator are polynomials
(resp. expansions of rational functions) in $z$, we will say that this
{\em operator is a polynomial (resp. an expansion of a rational
  function) in $z$}. Note that this property does not depend on the
basis.

As the next theorem shows, the eigenvalues of the renormalized operator
  $X_{w(\omega_i)}^N(z)$ on any simple finite-dimensional
  $U_q(\ghat)$-module are expansions of rational functions in
    $z$.
   
  \begin{thm}    \label{thimp}
  Every eigenvalue of the renormalized operator
  $X_{w(\omega_i)}^N(z)$ acting on any simple finite-dimensional
  $U_q(\ghat)$-module is an expansion of a rational function in
    $z$.
  \end{thm}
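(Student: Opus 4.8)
The plan is to express every eigenvalue of the renormalized operator $X^N_{w(\omega_i)}(z)$ on $L(m)$ as an explicit \emph{finite} product of factors $(1-zb^{-1})^{\pm 1}$, by combining the weak extremal monomial property (Theorem \ref{pma}), the $W$-invariance of the pairing (Proposition \ref{equith}), and the formula of Proposition \ref{taw}.

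First I would record, following the discussion preceding the theorem, that on the simple module $L(m)$ the formal power series operator $X_{w(\omega_i)}(z) \in U_q(\wh{\Hlie})[[z]]$ is upper-triangularizable, the diagonal entry on the (generalized) $\ell$-weight space attached to a monomial $M$ of $\chi_q(L(m))$ being precisely $\langle X_{w(\omega_i)}(z), M\rangle$. Consequently the eigenvalues of $X^N_{w(\omega_i)}(z) = (f_{i,m}(z))^{-1} X_{w(\omega_i)}(z)$ (see \eqref{XN}) are exactly the series $(f_{i,m}(z))^{-1}\langle X_{w(\omega_i)}(z), M\rangle$ as $M$ runs over the monomials occurring in $\chi_q(L(m))$, and it suffices to show each of these is an expansion of a rational function in $z$.

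Next I would invoke Theorem \ref{pma}: every such $M$ has the form $M = T_w(m)\prod_{k=1}^{p}(A_{j_k,b_k}^w)^{\varepsilon_k}$ with $\varepsilon_k \in \{\pm 1\}$. The pairing $\langle X_{w(\omega_i)}(z), -\rangle$ is multiplicative on monomials: by $W$-invariance one has $\langle X_{w(\omega_i)}(z), M\rangle = \langle X_i(z), T_w^{-1}(M)\rangle$, the map $T_w^{-1}$ is a group automorphism of $\mathcal{M}$, and $\langle X_i(z), -\rangle$ is multiplicative by \eqref{mult}. Hence
$$\langle X_{w(\omega_i)}(z), M\rangle = \langle X_{w(\omega_i)}(z), T_w(m)\rangle\prod_{k=1}^{p}\langle X_{w(\omega_i)}(z), A_{j_k,b_k}^w\rangle^{\varepsilon_k} = f_{i,m}(z)\prod_{k:\, j_k = i}(1 - zb_k^{-1})^{-\varepsilon_k},$$
where I used Corollary \ref{inveigw} (equivalently, the definition of $f_{i,m}(z)$) for the first factor and Proposition \ref{taw} for the rest. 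Dividing by $f_{i,m}(z)$ leaves the finite product $\prod_{k:\, j_k = i}(1 - zb_k^{-1})^{-\varepsilon_k}$, which is manifestly an expansion of a rational function in $z$; this proves the theorem, and incidentally makes transparent why the statement is equivalent to Theorem \ref{pma}.

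In this argument the substantive inputs are all already in hand: the difficulty has been front-loaded into Theorem \ref{pma} and the $W$-invariance of the pairing, after which the computation above is essentially one line. The only point that still requires a little care is the reduction carried out in the first step --- namely, checking that the Cartan--Drinfeld power series $X_{w(\omega_i)}(z)$, which need not act diagonalizably on a generalized $\ell$-weight space, is nonetheless upper-triangularizable there with the expected scalar $\langle X_{w(\omega_i)}(z), M\rangle$ on the diagonal, so that ``eigenvalue of the operator'' and ``value of the pairing'' genuinely coincide.
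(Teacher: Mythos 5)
Your proof is correct and follows exactly the paper's (one-line) argument, which derives the theorem from Theorem \ref{pma} and Proposition \ref{taw}; you have merely spelled out the multiplicativity of the pairing and the cancellation of $f_{i,m}(z)$ that the paper leaves implicit. The triangularizability point you flag at the end is likewise already addressed in the discussion preceding the theorem, so nothing further is needed.
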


  \begin{proof} This follows immediately from Proposition \ref{taw}
    and Theorem \ref{pma}.
    \end{proof}

  \begin{rem} It follows from the results of \cite{FH}
    that the degree of the rational function expressing the eigenvalue of
  $X_{w(\omega_i)}^N(z)$ on the $\ell$-weight space corresponding to a 
	given monomial $m'$ appearing in the $q$-character $\chi_q(L(m))$ equals
  the multiplicity of $w(\alpha_i)$ in the
  decomposition of the ordinary weight of the monomial
  $T_w(m)(m')^{-1}\in\mathcal{M}$.

  More precisely, this is established
  in \cite{FH} for $w = e$. For a general $w\in W$, by Proposition
  \ref{equith}, we have
	$\langle X_{w(\omega_i)}(z),m'\rangle = \langle X_i(z) ,
  T_w^{-1}(m')\rangle$ and so
\begin{equation}\label{jdegree}\langle X_{w(\omega_i)}^N(z),m'\rangle
  = \langle X_i^N(z) , T_w^{-1}(m')\rangle = \langle X_i(z),
  m^{-1}T_w^{-1}(m') \rangle.\end{equation}
Here $m^{-1}T_w^{-1}(m')\in \mathbb{Z}[A_{j,b}^{\pm 1}]_{j\in I,
  b\in\mathbb{C}^*}$ and therefore it follows from (\ref{XY}) that
the degree of the rational function $\langle X_i(z),
m^{-1}T_w^{-1}(m') \rangle$ is the multiplicity of $\alpha_i$ in the
ordinary weight of $m T_w^{-1}((m')^{-1})$, which is equal to the
weight written above.
\end{rem}

In the next theorem, we claim the rationality of the operator
$X_{w(\omega_i)}^N(z)$ itself. Hence it is a stronger
statement than Theorem \ref{pma}.

\begin{thm}\label{rattw} The renormalized $X$-series
  $X_{w(\omega_i)}^N(z)$ acting on any simple finite-dimensional
  $U_q(\ghat)$-module is an expansion of a rational function in $z$.
\end{thm}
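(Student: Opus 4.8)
The plan is to bootstrap from the case $w = e$, which is already known: it is proved in \cite{FH} that on every simple finite-dimensional $U_q(\ghat)$-module $L(m)$ and for every $j \in I$ the renormalized series $X_j^N(z) = (f_{j,m}(z))^{-1}X_j(z)$ acts as a polynomial in $z$. Since $X_j(0) = f_{j,m}(0) = 1$, the constant term $X_j^N(0)$ acts as the identity on $L(m)$, so $\det X_j^N(z)$ is a polynomial in $z$ with nonzero constant term; hence the inverse operator $X_j^N(z)^{-1}$, and more generally $X_j^N(z)^{c}$ for any $c \in \Z$, is the expansion of a matrix of rational functions in $z$ on $L(m)$.

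The structural input that makes the bootstrap work is Proposition \ref{taw}, which re-expresses the braid-twisted series $X_{w(\omega_i)}(z) = T_w(X_i(z))$ as a \emph{finite} product of untwisted ones: there are integers $m_{j,a}^i$, almost all zero, with $X_{w(\omega_i)}(z) = \prod_{j \in I,\, a \in \C^\times} X_j(za)^{m_{j,a}^i}$ in $U_q(\wh{\Hlie})[[z]]$. All the operators here lie in the commutative algebra $U_q(\wh{\Hlie})$, so, restricting to $L(m)$ and writing $X_j(za) = f_{j,m}(za)\, X_j^N(za)$, one gets
$$X_{w(\omega_i)}(z)\big|_{L(m)} = \Big(\prod_{j,a} f_{j,m}(za)^{m_{j,a}^i}\Big)\,\prod_{j,a} X_j^N(za)^{m_{j,a}^i}.$$
The pairing $\langle\,\cdot\,,\,\cdot\,\rangle$ is multiplicative in each of its two arguments, so the scalar prefactor equals $\prod_{j,a}\langle X_j(za), m\rangle^{m_{j,a}^i} = \langle X_{w(\omega_i)}(z), m\rangle$; dividing by $f_{i,m}(z) = \langle X_{w(\omega_i)}(z), T_w(m)\rangle$ (Corollary \ref{inveigw}) then yields, as an identity of formal power series of operators on $L(m)$,
$$X_{w(\omega_i)}^N(z) \;=\; \big\langle X_{w(\omega_i)}(z),\; m\, T_w(m)^{-1}\big\rangle\;\prod_{j,a} X_j^N(za)^{m_{j,a}^i}.$$

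It then remains to check that the scalar factor $g(z) := \langle X_{w(\omega_i)}(z),\, m\, T_w(m)^{-1}\rangle$ is a rational function of $z$. Since the extremal monomial $T_w(m)$ occurs in $\chi_q(L(m))$, the Frenkel--Mukhin theorem \cite{Fre2} (Theorem \ref{chiqprop}) gives $T_w(m) = m \prod_{k,b} A_{k,b}^{-d_{k,b}}$ with finitely many nonzero $d_{k,b} \in \Z_{\geq 0}$, whence $m\, T_w(m)^{-1} = \prod_{k,b} A_{k,b}^{d_{k,b}}$; using the factorization of $X_{w(\omega_i)}(z)$ once more together with formula \eqref{XA}, one computes $\langle X_{w(\omega_i)}(z), A_{k,b}\rangle = \prod_{a} (1 - z a b^{-1})^{-m_{k,a}^i}$, a finite product of factors $(1 - zc)^{\pm 1}$. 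Hence $g(z)$ is rational, and the last displayed formula exhibits $X_{w(\omega_i)}^N(z)$ on $L(m)$ as the product of a rational function of $z$ with a finite product of integer powers of the polynomial matrices $X_j^N(za)$; since each such power is an expansion of a matrix of rational functions in $z$, so is $X_{w(\omega_i)}^N(z)$, which is the assertion of Theorem \ref{rattw}.

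The only substantive ingredient here is the $w = e$ polynomiality result of \cite{FH}; everything else is bookkeeping of normalization factors, made transparent by Proposition \ref{taw}, which dissolves the apparent difficulty that $X_{w(\omega_i)}(z)$ is defined through the complicated braid operator $T_w$. The one place I expect any friction is the interpretation of the negative exponents $m_{j,a}^i$: the symbols $X_j^N(za)^{m_{j,a}^i}$ are to be read as formal-power-series inverses — legitimate because $X_j^N(0)$ is invertible — and then identified with the expansions of the corresponding rational inverse matrices $X_j^N(za)^{-1}$, which is exactly where the nonvanishing of the constant term of $\det X_j^N(z)$ is used.
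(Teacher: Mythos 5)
Your proposal is correct and follows essentially the same route as the paper: both rest on the $w=e$ polynomiality of $X_j^N(z)$ from \cite{FH} together with the factorization of $X_{w(\omega_i)}(z)$ as a finite Laurent monomial in the $X_j(za)$ from Proposition \ref{taw}, and both reduce the claim to controlling a scalar normalization factor. The only difference is bookkeeping: the paper normalizes each factor by its eigenvalue on the extremal weight space (so that the product of eigenvalues is directly $f_{i,m}(z)$ by Corollary \ref{inveigw}), whereas you normalize by the highest-weight eigenvalues and then verify explicitly, via the Frenkel--Mukhin decomposition and formula \eqref{XA}, that the correction factor $\langle X_{w(\omega_i)}(z), m\,T_w(m)^{-1}\rangle$ is rational --- a step the paper leaves implicit.
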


\begin{proof} By \cite[Theorem 5.17]{FH}, $X_{\omega_i}^N(z)$ is a
  polynomial in $z$.\footnote{ Note that the sentence ``By
    Proposition 5.5, Corollary 5.10 implies:'' before Theorem 5.17 in
    \cite{FH} should read ``By Proposition 5.5, Theorem 5.9
    implies:''.}  Hence the statement of the theorem is true for $w =
  e$. Now, for a general $w \in W$, recall that we have established in
  Proposition \ref{taw} that $X_{w(\omega_i)}(z)$ is a Laurent
  monomial in the $X_j(zb) = X_{\omega_j}(zb), j \in I, b \in
  \C^\times$. By the above result, the matrix of
    $X_j(zb)$ on $L(m)$ is equal to its eigenvalue on the weight
    subspace of $L(m)$ associated to the weight $w(\varpi(m))$ times a
    rational matrix-valued function in $z$. This implies that
    $X_{w(\omega_i)}(z)$ is equal to its eigenvalue on the
    (one-dimensional) weight
    subspace of $L(m)$ associated to the weight $w(\varpi(m))$ times a
    rational matrix-valued function in $z$. But as we have shown
    above, the latter eigenvalue is equal to $f_{i,m}(z)$. Therefore
    the matrix of $X_{w(\omega_i)}^N(z)$, which is given by formula
    \eqref{XN}, acting on $L(m)$, is an expansion of a rational
    function in $z$.
\end{proof}

\begin{rem} We note that rationality of closely related power series,
  which can be expressed as certain products of shifts
  of the $X$-series $X_i(z)$ acting on finite-dimensional
  $U_q(\ghat)$-modules (not necessarily those of highest weight)
  was recently established in \cite{gtl}.
\end{rem}

\subsection{Polynomiality of $X^N_{w(\omega_i)}(z)$}    \label{polyn}

Here is the second main conjecture of this paper.
  
\begin{conj}\label{twpol}  The renormalized operator
  $X_{w(\omega_i)}^N(z)$ acting on any simple finite-dimen\-sional
  $U_q(\ghat)$-module $L(m)$ is a polynomial in $z$.
\end{conj}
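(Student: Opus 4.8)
The plan is to derive Conjecture \ref{twpol} from the extremal monomial property, Conjecture \ref{mainc}, via the equivalence of Theorem \ref{equi}, and then to establish Conjecture \ref{mainc} itself in new cases by analysing the braid operators $T_w$ on the monomials of the fundamental $q$-characters. Since every dominant monomial is a product of fundamental ones and $\chi_q$ is a ring morphism, every monomial of $\chi_q(L(m))$ occurs in $\prod_k \chi_q(L(Y_{i_k,a_k}))$ for a suitable factorization $m = \prod_k Y_{i_k,a_k}$ (as $L(m)$ is a subquotient of $\bigotimes_k L(Y_{i_k,a_k})$), so it suffices to prove Conjecture \ref{mainc} for the fundamental modules $L(Y_{i,a})$. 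Rationality of the eigenvalues of the renormalized operator $X^N_{w(\omega_i)}(z)$ — equivalently, the Laurent property of Theorem \ref{pma} — is already known (Theorems \ref{thimp} and \ref{rattw}), so the task is to rule out poles: by \eqref{jdegree} and \eqref{XA}, the eigenvalue of $X^N_{w(\omega_i)}(z)$ on a monomial $m'$ of $\chi_q(L(Y_{i,a}))$ equals $\prod_b (1 - zb^{-1})^{-n_{i,b}}$, where $n_{j,b}$ denotes the power of $A_{j,b}$ in the Laurent monomial $Y_{i,a}^{-1}T_w^{-1}(m') \in \Z[A_{j,b}^{\pm 1}]$ (well-defined by Lemma \ref{qroot}). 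Requiring this to be a polynomial for all $i$ is exactly the statement that $Y_{i,a}^{-1}T_w^{-1}(m') \in \Z[A_{j,b}^{-1}]$, which after applying $T_w$ is Conjecture \ref{mainc}. Thus everything reduces to showing that $T_w^{-1}$ maps every monomial of $\chi_q(L(Y_{i,a}))$ into $Y_{i,a}\,\Z[A_{j,b}^{-1}]$.

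For $w = e$ this is \cite[Theorem 5.17]{FH}, and for $w = w_0$ it follows from the bar involution of \cite[Section 4.2]{h3} as in the proof of Theorem \ref{long} (alternatively, from \cite{Z}). For a simple reflection $w = s_k$ one argues case by case. If $i \neq k$, then $X_{s_k(\omega_i)}(z) = X_i(z)$ by Example \ref{ttild}(ii), so the $w = e$ result applies verbatim. If $i = k$, the key observation is that $T_{s_i}^{-1} = T_i^{-1}$ changes only the $A_{i,\cdot}$-content of a monomial (Lemma \ref{qroot}: it fixes $A_{j,b}$ when $C_{i,j} = 0$, multiplies $A_{j,b}$ by a monomial in the $A_{i,\cdot}$ when $C_{i,j} < 0$, and sends $A_{i,b}^{-1}$ to the positive power $A_{i,bq_i^{-2}}$). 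Hence the obstruction to $T_i^{-1}(m') \in Y_{i,a}\,\Z[A_{j,b}^{-1}]$ is concentrated in the data attached to the node $i$ and in the restriction of $L(Y_{i,a})$ to the $U_q(\widehat{sl}_2)$-subalgebra generated by the $x_{i,r}^\pm$ and $k_i^{\pm 1}$. Using the explicit form of $\chi_q(L(Y_{i,a}))$ produced by the algorithm of \cite{Fre2}, together with the fact that $T_i$ fixes $Y_{j,b}$ for $j \neq i$, one checks node by node that the positive powers of $A_{i,\cdot}$ generated by $T_i^{-1}$ from the factors $A_{i,b}^{-1}$ of $m'$ are cancelled by the negative powers coming from $T_i^{-1}(Y_{i,a})$ and from the factors $A_{j,b}^{-1}$ with $C_{i,j} < 0$. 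This yields Theorem \ref{symsimp}, hence Conjecture \ref{mainc} and, via Theorem \ref{equi}, Conjecture \ref{twpol} for simple reflections. It should be stressed that this is \emph{not} a naive reduction to $sl_2$: already for $\g = sl_3$, $w = s_1$ and $L(Y_{1,a})$ the factors $(A_{1,aq^{-1}}^{s_1})^{-1}$ and $(A_{1,aq}^{s_1})^{-1}$ occur at different spectral parameters (see the Examples in Section \ref{nsqc2}), so the combinatorics of a single $U_q(\widehat{sl}_2)$-string does not capture the statement.

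The hard part for a general $w \in W$ is the absence of structural control over the way $T_w$ mixes positive and negative powers of the $A_{j,b}$: the Examples in Section \ref{nsqc2} already show that $T_w^{-1}$ does not preserve $\Z[A_{j,b}^{-1}]$, so there is no a priori reason for $Y_{i,a}^{-1}T_w^{-1}(m')$ to lie in $\Z[A_{j,b}^{-1}]$, and a genuinely new positivity input seems to be required. Two plausible sources are the cluster algebra structure on $\Yim$, in which the monomials of $\chi_q(L(m))$ and the twisted root monomials $A_{j,b}^w$ should arise as cluster variables and their exchanges, and the identification (developed in Section \ref{trans}) of $X_{w(\omega_i)}(z)$ with a limit of the transfer-matrix of the category-$\OO^*$ module $L'(\Omega(\Psib_{w(\omega_i),a}))$, together with the extended $TQ$- and $QQ$-relations of \cite{FH4}; the latter route would also tie Conjecture \ref{twpol} to the conjectural polynomiality of the generalized Baxter operators. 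A further subtlety, even once the eigenvalues are known to be polynomials, is the passage to the operator statement: by the proof of Theorem \ref{rattw}, $X^N_{w(\omega_i)}(z)$ is a Laurent monomial in the commuting polynomial matrices $X^N_{\omega_j}(zb)$ acting on $L(m)$, and one must show that the poles of the inverses of those matrices, located at the zeros of their determinants, are cancelled by the numerator factors — a finer cancellation than polynomiality of the diagonal entries alone, supplied in the presently known cases by Theorem \ref{equi}.
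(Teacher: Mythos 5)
The statement you are asked to prove is Conjecture \ref{twpol}, which the paper leaves open in general: it is only established for $w=e$ (via \cite[Theorem 5.17]{FH}) and $w=w_0$ (via \cite{Z}), and the paper's new result for simple reflections (Theorem \ref{symsimp}) concerns only the weaker eigenvalue statement, Conjecture \ref{twpol1}. Your proposal is honest that the general case is open, but it contains a genuine logical gap even for the cases it claims: the entire route through Conjecture \ref{mainc} and Theorem \ref{equi} can only ever produce Conjecture \ref{twpol1}, because Theorem \ref{equi} is an equivalence between \ref{mainc} and \ref{twpol1}, while \ref{twpol} (polynomiality of the \emph{operator}, i.e.\ of all matrix entries in a triangularizing basis, not just the diagonal ones) strictly implies \ref{twpol1} and is not implied by it --- the paper's own examples of upper-triangular matrices with polynomial diagonal but non-rational off-diagonal entries make exactly this point. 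Your closing claim that the passage from eigenvalues to the operator is ``supplied in the presently known cases by Theorem \ref{equi}'' is therefore incorrect: in the known cases it is supplied by the direct operator-level results of \cite{FH} and \cite{Z}, and for simple reflections the operator statement is not established at all, in the paper or in your argument. What Theorem \ref{rattw} gives for general $w$ is only rationality of the operator, by writing $X_{w(\omega_i)}(z)$ as a Laurent monomial in the $X_j(zb)$ via Proposition \ref{taw}; turning that into polynomiality requires controlling the inverses of the polynomial matrices $X_j^N(zb)$, and no mechanism for that cancellation is offered.

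Separately, in the simple-reflection case $i=k$ your ``one checks node by node that the positive powers of $A_{i,\cdot}$ generated by $T_i^{-1}$ are cancelled'' is precisely the nontrivial content and is asserted rather than proved. The paper's proof of Theorem \ref{symsimp} carries this out by reducing (via multiplicativity, formula \eqref{mult}) to fundamental modules, then proving the explicit inequality \eqref{inegv} on the exponents $v_{k,a}$ by induction on the weight, using the decomposition \eqref{decompk} of $\chi_q(L(Y_{l,1}))$ into the pieces $L_p(M)$ from \cite{Hmz} and the $U_{q_i}(\wh{sl}_2)$ $q$-character formulas to compare $v_{i,a}(mM^{-1})$ with the multiplicities $u_{i,aq_i^{-1}}$ in $M$. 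Your overall reduction scheme (to fundamental modules, to negativity of the powers of $A^{s_i}_{j,b}$ in $m(Y^{s_i}_{l,1})^{-1}$, with the $j\neq i$ case disposed of by Lemma \ref{qroot}) does match the paper's, and your remark that this is not a naive $sl_2$ reduction is correct; but without the inductive inequality the argument is incomplete even for the eigenvalue statement, and with it you would still only have Conjecture \ref{twpol1} for simple reflections, not the conjecture as stated.
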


The following conjecture is a weaker version of
  Conjecture \ref{twpol}, as it states polynomiality of the
  {\em eigenvalues} of $X_{w(\omega_i)}^N(z)$.

\begin{conj}    \label{twpol1}
Every eigenvalue of the renormalized $X$-series
  $X_{w(\omega_i)}^N(z)$ acting on any simple finite-dimensional
  $U_q(\ghat)$-module $L(m)$ is a polynomial in $z$.
\end{conj}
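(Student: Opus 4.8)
The plan is to establish Conjecture \ref{twpol1} in the cases where a proof is within reach, namely $w=e$ (already \cite{FH}), $w=w_0$ (already \cite{Z}) and, the new contribution, $w=s_k$ a simple reflection; the general case would remain conjectural. By Theorem \ref{equi}, polynomiality of the eigenvalues of $X^N_{w(\omega_i)}(z)$ for all $i$ is equivalent to Conjecture \ref{mainc} for $w$, and by Theorem \ref{rattw} we already know these eigenvalues are expansions of rational functions; so the entire task is to rule out poles. Also, by the same reduction as in the proof of Theorem \ref{pma} (injectivity of $\chi_q$ as a ring morphism, Theorem \ref{chiqprop}, together with multiplicativity of the pairing, formula \eqref{mult}), it suffices to treat a fundamental highest monomial $m=Y_{i_0,a_0}$.

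First I would dispose of the easy directions. Since $X^N_{w(\omega_i)}(z)$ is a Laurent monomial in the shifted series $X_j(zb)=X_{\omega_j}(zb)$ (Proposition \ref{taw}), and for $w=s_k$, $i\neq k$ one has simply $X_{s_k(\omega_i)}(z)=X_i(z)$ by Example \ref{ttild}(ii), the case $i\neq k$ reduces at once to the case $w=e$, which is \cite[Theorem 5.17]{FH}. Thus everything comes down to the single series $X^N_{s_k(\omega_k)}(z)$.

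For $i=k$ I would use the $W$-invariance of the pairing (Proposition \ref{equith}) together with formula \eqref{XA} and $f_{k,m}(z)=\langle X_k(z),m\rangle$ (the $w=e$ case of Corollary \ref{inveigw}) to express the eigenvalue of $X^N_{s_k(\omega_k)}(z)$ on the $\ell$-weight of a monomial $m'$ of $\chi_q(L(m))$ as
$$\langle X^N_{s_k(\omega_k)}(z),m'\rangle=\langle X_k(z),m^{-1}T_{s_k}^{-1}(m')\rangle=\prod_{b}(1-zb^{-1})^{-n_{k,b}},$$
where $n_{k,b}\in\Z$ is the exponent of $A_{k,b}$ in $m^{-1}T_{s_k}^{-1}(m')\in\Z[A_{j,b}^{\pm1}]$ (that this lies in $\Z[A^{\pm1}_{j,b}]$ follows from Theorem \ref{chiqprop} and Lemma \ref{qroot}). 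So the claim is precisely that $n_{k,b}\le0$ for all $b$. Writing $m'=m\prod_\ell A_{i_\ell,a_\ell}^{-1}$ and computing $T_{s_k}^{-1}$ on each factor by inverting the formulas in the proof of Lemma \ref{qroot}, one sees that the positive contributions to $n_{k,b}$ come only from the factors $A_{k,a_\ell}^{-1}$ with $i_\ell=k$, while the negative contributions come from the factors $A_{j,a_\ell}^{-1}$ with $j$ adjacent to $k$ and from the boundary term $m^{-1}T_{s_k}^{-1}(m)$. Hence the whole statement reduces to a combinatorial inequality internal to $\chi_q(L(Y_{i_0,a_0}))$: at every spectral parameter, each ``$k$-descent'' $A_{k,\cdot}^{-1}$ must be compensated by a neighbouring descent $A_{j,\cdot}^{-1}$, $j\sim k$, or by the top factor.

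The main obstacle is proving this inequality. Its ``weight'' shadow, $\sum_b n_{k,b}\le0$ — i.e. the coefficient of $\alpha_k$ in $s_k(\varpi(m'))-\varpi(m)$ is $\le0$ — is automatic from the $W$-invariance of the ordinary character, since $s_k(\varpi(m'))$ is again a weight of $L(m)$, hence $\le\varpi(m)$; the content lies in the refinement at each individual $b$. I see two routes to it. On the $q$-character side: restrict $L(Y_{i_0,a_0})$ to the $\su$ generated by the index-$k$ Drinfeld generators, so that the monomials of $\chi_q$ organise into $\su$-strings on which the $A_{k,\cdot}^{-1}$-descents are balanced, and then control the $Y_k$-content contributed by the neighbouring variables $Y_j$, $j\sim k$, via the Frenkel--Mukhin algorithm; the neighbouring descents produced this way supply exactly the compensating factors. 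On the integrable-systems side: use a Wronskian-type $QQ$-relation from \cite{FH4} relating $X_{s_k(\omega_k)}(z)$ to $X_{\omega_k}(z)$ and to the $X_{\omega_j}(z)$, $j\sim k$, and deduce polynomiality of the eigenvalue of $X^N_{s_k(\omega_k)}(z)$ from that of $X^N_{\omega_k}(z)$ and the $X^N_{\omega_j}(z)$ (all known for $w=e$ by \cite{FH}), in the spirit of how polynomiality was bootstrapped through the $TQ$- and $QQ$-relations in \cite{FH}. In either approach the delicate point is the bookkeeping of the spectral-parameter shifts, which is exactly what makes the argument genuinely more than the $\g=\sw_2$ computation and is where I expect the real work to lie.
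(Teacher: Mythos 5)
Your reductions are all correct and coincide with the ones in the paper's proof of Theorem \ref{symsimp}: the case $i\neq k$ collapses to $w=e$ via Example \ref{ttild}(ii); multiplicativity of the pairing reduces everything to fundamental representations; and the $W$-invariance of the pairing together with \eqref{XA} converts polynomiality of the eigenvalue on the $\ell$-weight of $m'$ into the inequality $n_{k,b}\le 0$ for the exponent of $A_{k,b}$ in $m^{-1}T_{s_k}^{-1}(m')$ --- which is exactly the paper's inequality \eqref{inegv}, phrased in terms of the exponents of the $A^{s_k}_{j,b}$ in $m'(T_{s_k}(m))^{-1}$. You have correctly isolated the crux.

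But you do not prove that crux; you only name two candidate routes to it and explicitly defer ``the bookkeeping of the spectral-parameter shifts,'' which is precisely where the entire content of the theorem lives. The paper itself warns (in the remark after the $sl_3$ and $B_2$ examples) that the extremal monomial property does \emph{not} follow from a naive reduction to the $sl_2$-case, because the compensating neighbouring descents $(A^{w}_{j,\cdot})^{-1}$ occur at \emph{different} spectral parameters; so your route (a), as stated, is not yet an argument. What the paper actually does is an induction on the weight of $m'(Y_{l,1})^{-1}$ using the decomposition \eqref{decompk} of $\chi_q(L(Y_{l,1}))$ into the elements $L_p(M)$ of \cite{Hmz}: one chooses $p$ such that $m'$ is not $p$-dominant, applies the inductive hypothesis to the $p$-dominant monomial $M$ governing the string containing $m'$, and then splits into the easy case $p\neq k$ (the inequality is inherited directly) and the hard case $p=k$, where the explicit $\wh{sl}_2$ $q$-character formulas give the bound $v_{k,a}(m'M^{-1})\le u_{k,aq_k^{-1}}$ and a second inequality \eqref{ineq} controls $u_{k,aq_k^{-1}}$ by the $A$-exponents of $M$. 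There is also a separate case analysis when the fundamental index $l$ equals $k$, since then the extremal monomial $T_{s_k}(Y_{k,1})=Y_{k,1}A_{k,q_k}^{-1}$ already carries an $A_{k,\cdot}^{-1}$ and the induction must start from $T_{s_k}(Y_{k,1})$ rather than from $Y_{k,1}$. None of this appears in your proposal, so as it stands the argument has a genuine gap at its central step. Your route (b) (bootstrapping through $QQ$-relations) is not what the paper does and would require proving the relevant Wronskian identities at the level of eigenvalues, which is not obviously easier.
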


\begin{rem} Note that although this statement of Conjecture \ref{twpol1}
  in the case $w = e$ was proved in \cite{FH},
  equation (\ref{jdegree}) by itself does not imply that
$\langle X_{w(\omega_i)}^N(z),m'\rangle$ is a polynomial because for a
  general monomial $m'$ occurring in $\chi_q(L(m))$, the monomial
  $T_w^{-1}(m')$ does not necessarily occur in $\chi_q(L(m))$.\end{rem}

Now we relate these conjectures to Conjecture \ref{mainc}.

\begin{thm}    \label{equi}
Conjecture \ref{mainc} is equivalent to Conjecture \ref{twpol1}, which
in turn follows from Conjecture \ref{twpol}.

More precisely, the statement of Conjecture \ref{mainc} for the simple
module $L(m)$ and $w \in W$ is equivalent to the statement of
Conjecture \ref{twpol1} for $X_{w(\omega_i)}^N(z), i \in I$, and
$L(m)$.
\end{thm}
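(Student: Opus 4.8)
The plan is to translate the combinatorial statement of Conjecture \ref{mainc} into a statement about eigenvalues of the $X$-series via the pairing $\langle \cdot,\cdot\rangle$, using the explicit formula for $\langle X_{w(\omega_i)}(z),(A_{k,a}^w)^{-1}\rangle$ from Proposition \ref{taw}. First I would reduce both conjectures to the case $m = Y_{i,a}$, as is done in the proof of Theorem \ref{pma} and in the remark following Conjecture \ref{mainc}; all the relevant multiplicativity properties of the pairing (formula \eqref{mult}) and of $T_w$ are compatible with this reduction. So fix $i\in I$, $a\in\C^\times$, and let $L = L(Y_{i,a})$.

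The key computation is the following. By Theorem \ref{pma}, we may write $\chi_q(L) = T_w(Y_{i,a})\sum_{M} c_M M$ where the sum is over Laurent monomials $M\in\Z[(A_{j,b}^w)^{\pm 1}]$, with $c_{M_0}=1$ for $M_0 = 1$ (the extremal monomial $T_w(Y_{i,a})$ itself has multiplicity one by Theorem \ref{occurlm}). For each monomial $m' = T_w(Y_{i,a})M$ appearing in $\chi_q(L)$, formula \eqref{mult} together with Proposition \ref{taw} gives
$$
\langle X_{w(\omega_i)}^N(z), m'\rangle = \langle X_{w(\omega_i)}(z), M\rangle = \prod_{k} (1 - z b_k^{-1})^{\epsilon_k},
$$
where the product runs over the factors $(A_{i,b_k}^w)^{\epsilon_k}$ of $M$ with index $i$ (the index $k\neq i$ factors contribute $1$ by the $\delta_{i,k}$ in Proposition \ref{taw}), and $\epsilon_k\in\Z$. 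Here I would invoke Corollary \ref{inveigw} to make sure the diagonal entry of $X_{w(\omega_i)}^N(z)$ on the $\ell$-weight space of $m'$ is exactly this pairing, independent of $w$. Then the eigenvalue is a polynomial in $z$ if and only if all the exponents $\epsilon_k$ of the index-$i$ factors $A_{i,b}^w$ in $M$ are non-negative. Conversely, the eigenvalue on the $\ell$-weight space of weight $w(\varpi(m'))$ being polynomial for \emph{all} $i\in I$ forces all exponents of all $A_{j,b}^w$ in $M$ to be non-negative, i.e. $M\in\Z[(A_{j,b}^w)^{-1}]$ — after recalling that $T_w^{-1}(m') = Y_{i,a}M'$ with $M' = T_w^{-1}(M)$ having exponents $(-\epsilon_k)$, wait, more carefully: $M$ has nonpositive exponents exactly when $-\epsilon_k\le 0$, which is the condition for $m'\preceq_w T_w(Y_{i,a})$. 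So running over all $i$ gives the full statement of Conjecture \ref{mainc}, and running over a single $i$ (with $m$ varying) recovers it as well; this establishes the claimed equivalence of Conjecture \ref{mainc} for $L(m)$ with Conjecture \ref{twpol1} for $X_{w(\omega_i)}^N(z)$. The final clause, that Conjecture \ref{twpol} (polynomiality of the operator) implies Conjecture \ref{twpol1} (polynomiality of eigenvalues), is immediate since the eigenvalues are the diagonal entries in an upper-triangular matrix, as observed right after formula \eqref{XN}.

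The main subtlety I anticipate is the bookkeeping of signs and the precise correspondence between ``the exponent of $A_{i,b}^w$ in $M$ is $\le 0$'' and ``$(1-zb^{-1})^{\epsilon}$ is a polynomial'': one must be careful that a Laurent monomial $M$ in the $A_{j,b}^w$ lies in $\Z[(A_{j,b}^w)^{-1}]$ if and only if for every $j$ and $b$ the corresponding exponent is $\le 0$, and that this is detected by polynomiality of $\langle X_{w(\omega_j)}^N(z),m'\rangle$ for that particular $j$ via the $\delta_{j,k}$ isolation in Proposition \ref{taw}. One also needs that the $A_{j,b}^w$ are algebraically independent (Lemma \ref{qroot} together with the $w=e$ case from \cite{Fre}), so that this exponent is well-defined and $M$ is uniquely determined by $m'$. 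Given these points, the argument is essentially a dictionary between the two formulations, with Proposition \ref{taw}, Corollary \ref{inveigw}, and Theorem \ref{pma} supplying all the non-formal input; the hard analytic or representation-theoretic content has already been extracted in those statements.
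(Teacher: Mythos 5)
Your proposal is correct and follows essentially the same route as the paper: the equivalence of Conjectures \ref{mainc} and \ref{twpol1} is read off from Proposition \ref{taw} (together with the multiplicativity \eqref{mult}, Corollary \ref{inveigw}, and Theorem \ref{pma}), exactly as in the proof of Theorem \ref{thimp}, and the implication from Conjecture \ref{twpol} is the trivial passage from polynomiality of the operator to polynomiality of its eigenvalues. Your writeup simply makes explicit the dictionary (exponents of $A_{j,b}^w$ versus factors $(1-zb^{-1})$, using algebraic independence from Lemma \ref{qroot}) that the paper leaves implicit.
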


\begin{proof} As in the proof of Theorem \ref{thimp}, Proposition
  \ref{taw} proves the equivalence of Conjectures \ref{mainc} and
  \ref{twpol1}.

  Conjecture \ref{mainc} claims
  polynomiality of the eigenvalues of $X_{w(\omega_i)}^N(z)$, whereas
  Conjecture \ref{twpol} claims polynomiality of the operator
  $X_{w(\omega_i)}^N(z)$ itself. Hence Conjecture \ref{mainc} follows
  from Conjecture \ref{twpol}.
\end{proof}

For $w = e$, Conjectures \ref{mainc} and \ref{twpol} were established
in \cite{FH}. For $w = w_0$, the longest element of the Weyl group,
Conjecture \ref{twpol} was established in \cite{Z}. By Theorem
\ref{equi}, this also proves Conjecture \ref{twpol}. Thus, we obtain
the second proof of Conjecture \ref{mainc} for $w=w_0$ (the first one
is given in the proof in Theorem \ref{long}). In the next section, we
will prove Conjecture \ref{mainc} for simple reflections.

\subsection{Polynomiality for simple reflections}

 Let now $w = s_i, i \in I$, a simple reflection from the
  Weyl group.

\begin{thm}\label{symsimp} Conjecture \ref{twpol1} holds for simple
  reflections $s_i \in W, i \in I$. Namely, for any $j\in I$, every
  eigenvalue of $X_{s_i(\omega_j)}^N(z)$ on a simple
  finite-dimensional $U_q(\wh{\mathfrak{g}})$-module is a
  polynomial in $z$.
\end{thm}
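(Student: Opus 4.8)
The plan is to reduce the statement, via Example~\ref{ttild}(ii), to an analysis of the single series $X_{s_i(\omega_i)}(z)$, since for $j \neq i$ we have $X_{s_i(\omega_j)}(z) = X_j(z) = X_{\omega_j}(z)$, and for the latter the polynomiality of the renormalized operator is already known from \cite{FH} (Theorem~\ref{rattw} and its proof). So fix $j = i$ and consider $X_{s_i(\omega_i)}^N(z)$ acting on a simple module $L(m)$. By Theorem~\ref{equi} (the equivalence with Conjecture~\ref{mainc}), proving polynomiality of the eigenvalues of $X_{s_i(\omega_i)}^N(z)$ is the same as proving that every monomial $m'$ appearing in $\chi_q(L(m))$ satisfies $m' \in T_{s_i}(m)\,\mathbb{Z}[(A_{j,b}^{s_i})^{-1}]$. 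By formula \eqref{jdegree}, $\langle X_{s_i(\omega_i)}^N(z), m' \rangle = \langle X_i(z), m^{-1} T_{s_i}^{-1}(m') \rangle$, and by \eqref{XY}–\eqref{XA} this is a rational function whose only possible pole factors come from the exponents of $A_{i,b}$ appearing with \emph{positive} power in $m^{-1}T_{s_i}^{-1}(m')$; so the task is to rule out those, i.e. to show $m^{-1}T_{s_i}^{-1}(m') \in \mathbb{Z}[A_{j,b}^{-1}]$ with no positive $A_{i,b}$.

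The key technical input I would use is that the braid operator $T_i$ acts on $q$-characters in a controlled, essentially $\mathfrak{sl}_2$-local way: $T_i$ only touches the variables $Y_{i,a}$ and the root monomial $A_{i,a}$, fixing all $Y_{j,a}$ with $j \neq i$. Concretely, from \eqref{TY1} and Lemma~\ref{qroot}, $T_{s_i}^{-1}$ sends $Y_{i,a} \mapsto Y_{i,a} A_{i,aq_i^{-1}}$, $A_{i,a} \mapsto A_{i,aq_i^{-2}}^{-1}$, and fixes everything indexed by $j \neq i$. The first step is then to pass to the restriction of $L(m)$ to the subalgebra $U_{q_i}(\widehat{\mathfrak{sl}_2})$ attached to the node $i$: the $q$-character $\chi_q(L(m))$, viewed through the $i$-th ``slice'' — i.e. grouping monomials by their $Y_{j,a}$-content for $j\neq i$ — decomposes into $i$-strings, and each $i$-string is the $q$-character of a tensor product of $\widehat{\mathfrak{sl}_2}$-evaluation modules (this is the standard structure theory underlying the Frenkel–Mukhin algorithm, available from \cite{Fre2}). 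On each such $i$-string, the highest monomial property \eqref{mainthm1} in the $\mathfrak{sl}_2$ case says exactly that monomials are obtained from the string's own highest monomial by multiplying by $A_{i,b}^{-1}$'s only.

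The heart of the argument is therefore: the extremal monomial $T_{s_i}(m)$ is the top of the \emph{lowest} $i$-string (since $s_i$ flips the $i$-th $\mathfrak{sl}_2$-direction), and I must show that every monomial $m'$ in $\chi_q(L(m))$ lies at or below $T_{s_i}(m)$ in the $\preceq_{s_i}$-ordering. Writing $m' = m \prod_k A_{i_k,a_k}^{-1}$, I would split the product into the factors with $i_k = i$ and those with $i_k \neq i$; the latter are untouched by $T_{s_i}^{-1}$ and already have the right (negative) form, so the whole problem localizes to the $i$-component. There it becomes a purely $\widehat{\mathfrak{sl}_2}$ statement: within each $i$-string the extremal (lowest-weight) monomial dominates all others in the twisted order, which is precisely Chari's braid action statement combined with the $\mathfrak{sl}_2$ highest monomial property — or, in the language of this paper, it is exactly Conjecture~\ref{mainc} for $\mathfrak{g} = \mathfrak{sl}_2$ and $w$ the nontrivial element, which by Theorem~\ref{long} ($w_0 = s_1$ in rank one) is \emph{already proved}. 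Assembling these, I would conclude $m^{-1}T_{s_i}^{-1}(m') \in \mathbb{Z}[A_{j,b}^{-1}]$, hence by Proposition~\ref{taw} each eigenvalue $\langle X_{s_i(\omega_i)}^N(z), m'\rangle$ is a product of factors $(1 - zb^{-1})$, i.e. a polynomial.

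\textbf{The main obstacle.} The delicate point is the decomposition of $\chi_q(L(m))$ into $i$-strings and the identification of each string with a well-understood $\widehat{\mathfrak{sl}_2}$ $q$-character: this is clean for \emph{fundamental} modules (which suffices by the remark after Conjecture~\ref{mainc}, reducing to $m = Y_{i,a}$), but even there one must verify that the braid operator $T_{s_i}$, which is defined combinatorially on all of $\Yim$, restricts correctly to each string and that the string's own ``highest monomial'' transforms to its ``lowest monomial'' under $T_{s_i}$ with matching spectral parameters. The Example~(i) remark in the paper — that different shifts $(A_{1,aq^{-1}}^w)^{-1}$ and $(A_{1,aq}^w)^{-1}$ occur — is a warning that the spectral-parameter bookkeeping cannot be waved away: one genuinely needs to track the shifts $q_i^{\pm 1}$ introduced by \eqref{TY1} through the $\mathfrak{sl}_2$ reduction. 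So I expect the bulk of the real work to be in making the $i$-string/$\widehat{\mathfrak{sl}_2}$-reduction precise enough that the already-known rank-one case of Conjecture~\ref{mainc} can be invoked verbatim.
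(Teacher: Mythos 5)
Your overall scaffolding matches the paper's: reduce to $j=i$ via Example~\ref{ttild}, reduce to fundamental modules, and use the $sl_2$-decomposition of $\chi_q(L(m))$ at each node (the paper uses the decomposition \eqref{decompk} into the elements $L_p(M)$ from \cite{Hmz}, which is exactly your ``$i$-strings''). But the core of your argument has a genuine gap, and it is precisely the one the paper warns about in the Remark following Example (i): the extremal monomial property for $w=s_i$ does \emph{not} follow from a reduction to the rank-one case. Your claim that the factors $A_{i_k,a_k}^{-1}$ with $i_k\neq i$ are ``untouched by $T_{s_i}^{-1}$ and already have the right form, so the whole problem localizes to the $i$-component'' is false, and in fact backwards. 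By the formulas in Lemma~\ref{qroot}, $T_{s_i}^{-1}(A_{i,a}^{-1})=A_{i,aq_i^{-2}}$ is a \emph{positive} power of a root monomial, while for a neighbor $j$ of $i$ one has, e.g., $T_{s_i}^{-1}(A_{j,a}^{-1})=A_{j,a}^{-1}A_{i,aq_i^{-1}}^{-1}$. So the $i$-component of $m'$ alone produces poles in the eigenvalue, and it is exactly the factors attached to the \emph{neighboring} nodes that must cancel them. The statement one actually has to prove is the cross-node inequality \eqref{inegv}, which bounds the multiplicity $v_{i,a}$ of $A_{i,a}^{-1}$ in $m'$ by a sum of multiplicities $v_{k,b}$ for nodes $k$ adjacent to $i$. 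No amount of information about each $i$-string in isolation (Chari's action plus the rank-one extremal monomial property, i.e.\ Theorem~\ref{long} for $sl_2$) can yield such an inequality, because that rank-one input only constrains the $A_{i,\cdot}$-content within a string relative to the string's own top monomial; it says nothing about how deep a string can descend relative to the $A_{k,\cdot}$-content of that top monomial for $k\neq i$.

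The paper supplies the missing ingredient by an induction on the weight using the decompositions \eqref{decompk} for \emph{all} nodes $p\in I$ (not only $p=i$), combined with the explicit inequality \eqref{ineq}, which bounds the multiplicity $u_{i,aq_i^{-1}}$ of $Y_{i,aq_i^{-1}}$ in a $p$-dominant monomial $M$ (and hence the possible depth of the $i$-string $L_i(M)$) in terms of the exponents $V_{r,b}$ of the root monomials in the factorization of $M$ — this is where the neighbor contributions enter, via the occurrence of $Y_{i,\cdot}$ in the $A_{r,\cdot}$ for $r$ adjacent to $i$ (formula \eqref{Ai}). Your ``main obstacle'' paragraph senses that something is delicate, but diagnoses it as spectral-parameter bookkeeping in the $sl_2$-reduction; the real obstacle is that the required estimate is intrinsically a statement comparing different nodes, and your proposed route (invoke rank-one Conjecture~\ref{mainc} verbatim on each string) cannot produce it. To repair the proposal you would need to add, and prove by induction, an inequality of the type \eqref{inegv}.
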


Applying Theorem \ref{equi}, we obtain the following corollary.

\begin{thm}    \label{refl}
  Conjecture \ref{mainc} holds for simple reflections $s_i \in W, i
  \in I$.
\end{thm}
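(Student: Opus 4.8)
The plan is to establish Theorem \ref{symsimp} and then obtain Theorem \ref{refl} from Theorem \ref{equi}. The case $j\neq i$ is immediate: by Lemma \ref{wn}(2) we have $s_i(\omega_j)=\omega_j$, so $X_{s_i(\omega_j)}(z)=X_{\omega_j}(z)=X_j(z)$, and since the normalization factor $f_{j,m}(z)$ does not depend on $w$ (Corollary \ref{inveigw}), the renormalized operator $X_{s_i(\omega_j)}^N(z)$ acting on $L(m)$ is literally $X_{\omega_j}^N(z)$, which is a polynomial in $z$ by \cite[Theorem 5.17]{FH}; in particular its eigenvalues are polynomials. So the content is the case $j=i$.

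For a monomial $M$ occurring in $\chi_q(L(m))$ write $m^{-1}M=\prod_{k,c}A_{k,c}^{-d_{k,c}(M)}$ with $d_{k,c}(M)\in\Z_{\geq0}$, and $m=\prod_{k,c}Y_{k,c}^{u_{k,c}(m)}$. First I would compute $\langle X_{s_i(\omega_i)}^N(z),M\rangle$. Since $X_{s_i(\omega_i)}(z)=T_{s_i}(X_i(z))$, the $W$-invariance of the pairing (Proposition \ref{equith}) together with \eqref{mult} gives
$$\langle X_{s_i(\omega_i)}^N(z),M\rangle=\langle X_i(z),m^{-1}T_i^{-1}(M)\rangle,$$
where $m^{-1}T_i^{-1}(M)\in\Z[A_{k,c}^{\pm1}]$ by Theorem \ref{pma}; by \eqref{XA} this equals $\prod_c(1-zc^{-1})^{-n_c}$, with $n_c$ the exponent of $A_{i,c}$ in $m^{-1}T_i^{-1}(M)$. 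Thus Theorem \ref{symsimp} for $j=i$ is \emph{equivalent to the assertion that all the $n_c$ are $\leq 0$}. Feeding the explicit formulas for $T_i^{-1}$ on root monomials (Lemma \ref{qroot}) and the factorization of $X_{s_i(\omega_i)}(z)$ in the series $X_k(z\cdot)$ (Proposition \ref{factW}, Example \ref{ttild}(ii)) into this — using the $w=e$ polynomiality of $X_k^N(z)$ from \cite{FH} for $k=i$ and for the neighbours of $i$ — one gets the explicit rational form
$$\langle X_{s_i(\omega_i)}^N(z),M\rangle=G(z)\,\prod_c(1-zq_i^2c^{-1})^{-d_{i,c}(M)}\prod_{\{k\,:\,C_{i,k}<0\}}\ \prod_b\ \prod_c(1-zbc^{-1})^{d_{k,c}(M)},$$
where the $b$ are the spectral shifts of Proposition \ref{factW} and $G(z)=\prod_c(1-zq_ic^{-1})^{u_{i,c}(m)}$ is (a shift of) the $i$-th Drinfeld polynomial of $L(m)$; one pins $G(z)$ down from $\langle X_{s_i(\omega_i)}^N(z),T_{s_i}(m)\rangle=1$ together with $T_{s_i}(m)=m\prod_cA_{i,cq_i}^{-u_{i,c}(m)}$. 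This is visibly an expansion of a rational function (recovering Theorem \ref{rattw}), and polynomiality amounts to every zero of the denominator being cancelled by a zero of $G(z)$ or of a neighbour factor, i.e.\ to the combinatorial inequality
$$d_{i,c}(M)\ \leq\ u_{i,cq_i^{-1}}(m)\ +\ \sum_{\{k\,:\,C_{i,k}<0\}}\ \sum_b\ d_{k,\,cq_i^{-2}b}(M)\qquad\text{for all }c.$$

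To prove this inequality I would restrict $L(m)$ to the subalgebra $U_{q_i}(\widehat{\mathfrak{sl}}_2)$ attached to the node $i$. That restriction is a direct sum of simple $U_{q_i}(\widehat{\mathfrak{sl}}_2)$-modules, and the projection $\pi_i$ of $\chi_q(L(m))$ obtained by setting $Y_{k,c}\mapsto 1$ for $k\neq i$ is the sum of their $q$-characters, with $\pi_i(A_{i,c})$ the $\widehat{\mathfrak{sl}}_2$-root monomial and $\pi_i(A_{k,c})$ (for $C_{i,k}<0$) an explicit monomial in the $Y_{i,\cdot}^{-1}$. Since $\pi_i$ intertwines $T_i$ with the $\widehat{\mathfrak{sl}}_2$-braid operator, and since Conjecture \ref{mainc} is already known for $\widehat{\mathfrak{sl}}_2$ and the simple (= longest) reflection (Theorem \ref{long}), one controls the $\widehat{\mathfrak{sl}}_2$-root content of $\pi_i(T_i^{-1}(M))$ inside each $\widehat{\mathfrak{sl}}_2$-summand; combining this with the shape of the $\pi_i(A_{k,c})$ and carefully tracking the spectral shifts yields the inequality. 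This last step — transporting information that a priori constrains only the node-$i$ variables back to a statement about genuine $A$-monomials of $\mathfrak{g}$, given that the neighbour monomials $A_{k,c}^{-1}$ themselves contribute $Y_{i,\cdot}$-factors — is the delicate point and the one I expect to be the main obstacle. Once the inequality is in hand, $\langle X_{s_i(\omega_i)}^N(z),M\rangle$ is a polynomial for every $M$, which proves Theorem \ref{symsimp}; Theorem \ref{refl} then follows from Theorem \ref{equi}.
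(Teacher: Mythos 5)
Your reduction is correct and coincides with the paper's: for $j\neq i$ the claim is the $w=e$ case from \cite{FH}, and for $j=i$ the $W$-invariance of the pairing together with the explicit formulas for $T_i^{-1}$ on the $A_{k,c}$ turns polynomiality of $\langle X_{s_i(\omega_i)}^N(z),M\rangle$ into exactly the combinatorial inequality you state, which is the paper's inequality \eqref{inegv} (your extra term $u_{i,cq_i^{-1}}(m)$ absorbs the case the paper treats separately as $l=i$ after reducing to fundamental representations via \eqref{mult}). The gap is in the proof of that inequality, and it is the step you yourself flag as the ``main obstacle.'' Two things go wrong with the single-node projection $\pi_i$. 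First, $\pi_i$ discards precisely the data appearing on the right-hand side of the inequality: the individual neighbour exponents $d_{k,c}(M)$, $k\neq i$, are not recoverable from the $Y_{i,\cdot}$-content of $\pi_i(M)$, so ``controlling the $\widehat{sl}_2$-root content inside each summand'' bounds $d_{i,c}(M)$ only in terms of the $Y_{i,\cdot}$-content of the $i$-dominant head $M'$ of its $\widehat{sl}_2$-string, and to convert that into the stated inequality you must already know the inequality for $M'$ and know that $d_{k,c}(M)=d_{k,c}(M')$ for $k\neq i$ along the string. The paper gets both from the finer decomposition $\chi_q(L(m))=\sum_M\lambda_p(M)L_p(M)$ of \cite[Prop.~3.1]{Hmz}, whose pieces $L_p(M)\in M\cdot\Z[A_{p,b}^{-1}]$ retain all variables, rather than from the lossy projection $\pi_i$.

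Second, and more structurally, node $i$ alone cannot drive the argument: an $i$-dominant monomial $M$ that is not extremal is the head of its own $i$-string, so the node-$i$ $\widehat{sl}_2$ theory (and the known $sl_2$ case of Conjecture \ref{mainc}) gives no new information about it. The paper's proof is an induction on the height of $\varpi(mM^{-1})$ in which one chooses, for each non-extremal $M$, \emph{some} node $p$ at which $M$ fails to be dominant (guaranteed by \cite{Fre2}) and descends to the $p$-dominant head of the corresponding $L_p(\cdot)$-string; the explicit $\widehat{sl}_2$ estimate $v_{i,a}(mM'^{-1})\leq u_{i,aq_i^{-1}}(M')$ enters only in the branch $p=i$, while for $p\neq i$ the induction hypothesis for the head transfers verbatim. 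Your sketch has no such induction and attempts to obtain the inequality in one shot from the node-$i$ reduction, which cannot work because the right-hand side is itself built up monomial by monomial through descents at the neighbouring nodes. So the skeleton (reduction to the inequality, then Theorem \ref{equi}) matches the paper, but the core combinatorial step is not established by the argument as proposed.
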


To prove Theorem \ref{symsimp}, we need some results from \cite[Sections 3.3, 3.4]{Hmz}. Let $k\in I$ and $M$ a $k$-dominant monomial
(that is all powers of the variables $Y_{k,a}$ in $M$ are non-negative). An element $L_k(M)$ is defined in \cite{Hmz} in the
following way. Let $M^{(k)}$ be the expression obtained from $M$ by keeping only the factors $Y_{k,a}$. Then it corresponds to a simple module $L$ over $U_{q_k}(\wh{sl}_2)$. 
Its $q$-character belongs to
$M^{(k)} \cdot \mathbb{Z}[(Y_{k,bq_k^{-1}}Y_{k,bq_k})^{-1}]_{b\in\mathbb{C}^\times}$. By
definition,
$$L_k(M)\in M \cdot \mathbb{Z}[(Y_{k,bq_k^{-1}}Y_{k,bq_k})^{-1}]_{b\in\mathbb{C}^\times}$$ 
is obtained from the $q$-character of $L$ by replacing $M^{(k)}$ by $M$ and each factor $(Y_{k,bq_k^{-1}}Y_{k,bq_k})^{-1}$ by $A_{k,b}^{-1}$.

Now let $m$ be a dominant monomial. Then it follows from
\cite[Proposition 3.1]{Hmz} that we have a unique decomposition
\begin{equation}\label{decompk}\chi_q(L(m)) = \sum_{M\text{
      $k$-dominant}} \lambda_k(M) L_k(M)\end{equation}
with $\lambda_k(M)\geq 0$.

We are ready to prove Theorem \ref{symsimp}.

\begin{proof} For $j\neq i$, we have $X^N_{s_i(\omega_j)} = X_j^N(z)$ and
  the result is proved in \cite{FH} (see the last
    paragraph of Section \ref{polyn}). It remains to consider the case
    $j=i$. Then $T_{s_i(\omega_i)}(z)$ is given by explicit formulas
  in Example \ref{ttild}.  Moreover, formula (\ref{mult})
    implies that the desired polynomiality property is multiplicative
    with respect to taking tensor products of finite-dimensional
    $U_q(\ghat)$-modules. Hence it suffices to prove the
  polynomiality of the eigenvalues  of
    $T_{s_i(\omega_i)}(z)$ on the fundamental representations
  $L(Y_{l,1})$, $l\in I$.

First assume that $l\neq i$. Then we have $T_{s_i}(Y_{l,1}) = Y_{l,1}$. Recall that a monomial $m$ in $\chi_q(L(Y_{l,1}))$ is of the form 
\begin{equation}    \label{via}
  m = Y_{l,1}\prod_{k\in I, a\in\mathbb{C}^\times}A_{k,a}^{-v_{k,a}},
\end{equation}
with the $v_{k,a}\geq 0$. By Proposition \ref{taw}, in
  order to prove our statement about the eigenvalues, it suffices to
 show that the powers with which the variables
  $A_{k,a}^{s_i}$ appear in the factorization of the monomial
  $m (Y_{l,1}^{s_i})^{-1}$ are negative.
The formulas in the proof of Lemma \ref{qroot} give a relation between
the $A_{k,a}$ and the $A_{k,a}^{s_i}$. In particular, if $k\neq i$,
then $A_{k,a}^{s_i}$
appears once in the factorization of $A_{k,a}$. This implies that the
$A_{k,a}^{s_i}$ with $k \neq i$ appear in the factorization of the monomial
  $m (Y_{l,1}^{s_i})^{-1}$ in negative powers. Hence, it remains to
consider the case $k = i$ and to prove that the powers
  are negative in this case as well. The explicit
    formulas given in the proof of Lemma \ref{qroot} show that we need to
prove the following inequality: 
\begin{multline}\label{inegv}v_{i,a}\leq \\ \sum_{k\in I,C_{i,k} = -1}v_{k,aq_i^{-1}} + 
\sum_{k\in I, C_{i,k} = -2}(v_{k,aq^{-2}}+v_{k,a}) +
\sum_{k\in I, C_{i,k} =
  -3}(v_{k,aq^{-3}}+v_{k,aq^{-1}}+v_{k,aq}).\end{multline}

Let is prove this inequality by induction on the length of the weight
of $m Y_{l,1}^{-1}$. Formula (\ref{inegv}) is clear if $m$ is the highest
monomial, $m = Y_{l,1}$. Otherwise, it follows from the results of
\cite{Fre2} that
there exists $p \in I$  such that $m$ is not $p$-dominant. Consider
the decomposition (\ref{decompk}):
$$\chi_q(L(Y_{l,1})) =  \sum_{M_p\text{ $p$-dominant}} \lambda_{M_p} L_p(M_p).$$ 
Then there is a $p$-dominant monomial $M$, such that $m$ is a monomial $L_p(M)$ which occurs in this decomposition. 
By the induction hypothesis, we have the desired
  inequality for $M$.

If $p\neq i$, the inequality for $M$ implies directly the inequality for $m$.

If $p = i$, we use explicit formulas for the the $q$-characters of
simple modules over $U_q(\wh{sl}_2)$. These formulas imply that
$$v_{i,a}(mM^{-1})\leq u_{i,aq_i^{-1}}$$
where $u_{i,aq_i^{-1}}$ is the multiplicity of $Y_{i,aq_i^{-1}}$ in
$M$ and $v_{i,a}$ is defined in formula \eqref{via}. If we define $V_{r,b}$ now by the formula 
$$M = Y_{l,1}\prod_{r,b}A_{r,b}^{-V_{r,b}},$$
then we have the following inequalities:
\begin{equation}\label{ineq}u_{i,aq_i^{-1}}\leq -V_{i,a}-V_{i,aq_i^{-2}} + \sum_{r\in I,C_{i,k} = -1}V_{r,aq_i^{-1}}\end{equation}
$$+ 
\sum_{r\in I, C_{i,r} = -2}(V_{r,aq^{-2}}+V_{r,a}) +
\sum_{r\in I, C_{i,r} = -3}(V_{r,aq^{-3}}+V_{r,aq^{-1}}+V_{r,aq}).$$
If $r\neq i$, then $V_{r,b} = v_{r,b}$. Since
$$v_{i,a} = V_{i,a} + v_{i,a}(mM^{-1}) \leq V_{i,a} + V_{i,aq_i^{-2}} + v_{i,a}(mM^{-1}),$$
we obtain the desired inequality for $m$. This completes
  the case $l \neq i$.

Finally, consider the case $l = i$. Then
  $T_{s_i}(Y_{i,1}) = Y_{i,1}A_{i,aq_i}^{-1}$. For any monomial $m$ in $\chi_q(L(Y_{i,1}))$, one of the following three conditions hold:

$m = Y_{i,1}$;

$m = T_{s_i}(Y_{i,1})$;

$m = T_{s_i}(Y_{i,1})\prod_{k\in I, r >  r_i}A_{k,q^r}^{-v_{k,q^r}}$, with $v_{k,q^r}\geq 0$.

The result is clear for the first two monomials. For the third one, we
use the same argument as above. Namely, we need to show that the powers with
which the variables
  $A_{k,a}^{s_i}$ appear in the factorization of the monomial
  $m (Y_{i,1}^{s_i})^{-1}$ are negative. As above, this is clear if $k \neq i$. For $k = i$, we establish 
	the inequality (\ref{inegv}) by induction on the length of the weight
of $m (T_{s_i}(Y_{i,1}))^{-1}$. We have already seen the result for $m = T_{s_i}(Y_{i,1})$.
Otherwise, there is $p\in I$ such that $m$ is not $p$-dominant and there is
a $p$-dominant monomial $M$, such that $m$ is a monomial $L_p(M)$ which occurs in this decomposition of $\chi_q(L(Y_{i,1}))$. 
As $m\neq Y_{i,1}, T_{s_i}(Y_{i,1})$, we have that $M\neq Y_{i,1}$ (as $L_i(Y_{i,1}) = Y_{i,1} + T_{s_i}(Y_{i,1})$ and $L_p(Y_{i,1}) = Y_{i,1}$ for $p\neq i$). 
By the induction hypothesis, we have the desired inequality for $M$. 

If $p\neq i$, the inequality for $M$ implies directly the inequality for $m$.

If $p = i$, we have seen $v_{i,a}(mM^{-1})\leq u_{i,aq_i^{-1}}$ with the same notations as above. If we set $V_{r,b}$ as above, we have also the inequalities (\ref{ineq}).
If $r\neq i$, then $V_{r,b} = v_{r,b}$. Since
$$v_{i,a} = -\delta_{a,q_i} + V_{i,a} + v_{i,a}(mM^{-1}) \leq V_{i,a} + V_{i,aq_i^{-2}} + v_{i,a}(mM^{-1}),$$
we obtain the desired inequality for $m$.
\end{proof}

\section{Transfer-matrices and Baxter polynomiality} \label{trans}

In this section we introduce the transfer-matrices and particular
examples of the transfer-matrices, the so-called generalized Baxter
$Q$-operators. We will show that the $\OO^*$ versions of
  these operators are in fact multi-parameter deformations of the
$X$-series $X_{w(\omega_i)}(z)$. We expect that polynomiality of the
renormalized series $X_{w(\omega_i)}^N(z)$, which we conjectured in
the previous section, extends to polynomiality of the (suitably
renormalized) generalized Baxter $Q$-operators. This is Conjecture
\ref{trm}. So far, it has been proved only for $w=e$, in \cite{FH}. In the
special case $\g=sl_2$, we recover the results of Baxter on the
polynomiality of the eigenvalues of the celebrated Baxter
$Q$-operator. More precisely, Baxter introduced his $Q$-operator in
the XYZ model \cite{Baxter}, whereas we are considering the analogous
operator in the XXZ model (which appears in a certain limit of the XYZ
model) as well as its generalizations to the XXZ-type models
associated to $U_q(\ghat)$. See the Introduction of \cite{FH} for more
details.

\subsection{Definition of transfer-matrices}

The universal $R$-matrix $\mathcal{R}$ of $U_q(\wh{\Glie})$ belongs to
the completed
tensor product $U_q(\wh{\Glie})\wh{\otimes} U_q(\wh{\Glie})$
(it is
completed with respect to the
$\ZZ$-grading of $U_q(\wh{\Glie})$, and with a certain completion of
the tensor product of the Cartan subalgebra).
This completed tensor product acts on the tensor products of modules
we consider below.
The universal $R$-matrix satisfies the Yang-Baxter equation.

In fact, it is known that $\mathcal{R}$ belongs to
$U_q(\wh{\mathfrak{b}})\wh{\otimes} U_q(\wh{\mathfrak{b}}_-)$, where
$U_q(\wh{\mathfrak{b}}_-)$ is the subalgebra generated by $f_i$ 
and $k_i^{\pm1}$ with $0\le i\le n$. Hence for any any
$U_q(\wh{\mathfrak{b}})$-module $V$, we define the $L$-{\em
  operator} associated to $V$ as
$$L_V(z) := (\pi_{V(z)}\otimes \on{Id}) (\mathcal{R}) \in
\on{End}(V)[[z]]\wh{\otimes} U_q(\wh{\mathfrak{g}}),$$ where $\pi_V(z):
U_q(\wh{\mathfrak{b}})\rightarrow \on{End}(V)[[z]]$ is
the action of $U_q(\wh{\mathfrak{b}})$ on $V$ twisted by $\tau_z$.

Let $V$ be a $U_q(\wh{\mathfrak{b}})$-module which is Cartan-diagonalizable 
with finite-dimensional weight spaces (for example, this property holds if $V$
is in the category $\mathcal{O}$ or
in the dual category $\mathcal{O}^*$). Assume in addition that
the ordinary weights of $V$ are in $P \subset P_\Q$. For $g\in
U_q(\wh{\mathfrak{b}})$ (or in
$\on{End}(V)$), the twisted trace of $g$ on $V$ is defined as follows:
$$\on{Tr}_{V,u}(g) = \sum_{\lambda \in P}
\on{Tr}_{V_\lambda}(\pi_V(g))\left(\prod_{i\in I}u_i^{\lambda_i}\right) \in
\CC[[u_i^{\pm 1}]]_{i\in I},$$ where $u = (u_i), i \in I$, where each
$u_i$ is a formal variable and $\lambda_i\in\ZZ$ is defined by
$\lambda(i) = q_i^{\lambda_i}$.

\begin{defi} \label{gr tr}
The twisted transfer-matrix associated to $V$ is
$$t_V(z,u) := (\on{Tr}_{V,u}\otimes \on{Id}) (L_V(z))\in U_q(\wh{\mathfrak{b}}_-)[[z, u_i^{\pm 1}]]_{i\in I}.$$
\end{defi}

For $i\in I$, introduce the variable
$$v_i := \prod_{j\in I} u_j^{C_{j,i}}$$ 
which corresponds to the $i$th simple root. Then we have
$$t_V(z,u) \in 
  U_q(\wh{\mathfrak{b}}_-)[[z,v_i^{-1}]]_{i\in I}[u_j^{\pm 1}]_{j\in
    I} \text{ if $V$ is in $\mathcal{O}$,}$$
$$t_V(z,u) \in  U_q(\wh{\mathfrak{b}}_-)[[z,v_i]]_{i\in
  I}[u_j^{\pm 1}]_{j\in I} \text{ if $V$ is in $\mathcal{O}^*$,}$$
Hence products of the twisted transfer-matrices of modules
  from the same category ($\OO$ or $\OO^*$) are well-defined.

%Note that for $V = R_{i,a}^+$, we have $t_V(z,u)\in U_q(\mathfrak{b}^-)[[z,v_i]]_{i\in I}$.

%\begin{example}\label{extr} Let $i\in I$. The zero prefundamental
%  representations$L(\overline{\alpha_i})$, $L(\overline{\omega_i})$
%  are defined in Section \ref{lhwm}. By using Section \ref{fact}, we
%  get
%$$t_{L(\overline{\alpha_i})} = v_i k_i^{-1}\text{ and
%}t_{L(\overline{\omega_i})}(z,u) = u_i \tilde{k}_i^{-1}$$
%where the $\tilde{k}_i = \prod_{j\in I}k_j^{(C^{-1})_{j,i}}$ are
%characterized by the relations
%$$k_i = \prod_{j\in I}\tilde{k}_j^{C_{j,i}}.$$
%\end{example}

If $V$ is a finite-dimensional module (i.e. an object of
  $\mathcal{C}$), $t_V(z,u)$ is a polynomial in the variables
  $u_i^{\pm 1}$, and therefore the variables $u_i$
  can be specialized to any non-zero complex values. For example, the
  standard specialization is
$$u_i = q_i^{2\sum_{j\in I}(C^{-1})_{j,i}} = q^{2\sum_{j\in I}(DC^{-1})_{i,j}}.$$ 
This means $v_i = q_i^2$ because $DC^{-1}$ is symmetric (and $u_1 = q$
if $\g = sl_2$).

It can be proved as in \cite[Lemma 2]{Fre} that for $V,V'$ in the
category $\mathcal{O}$ (or $\mathcal{O}^*$) whose weights are in
$P$, and for any extension $W$  of $V$ and $V'$, we have
\begin{equation}\label{alg}t_{W}(z,u) = t_V(z,u) + t_{V'}(z,u)\text{
    and }t_{V\otimes V'}(z,u) = t_V(z,u) t_{V'}(z,u).\end{equation}
Hence $t_V(z,u)$ depends only of the class of
  $V$ in the corresponding Grothendieck ring.

\begin{thm}\cite{FH} For $V, V'$ in the category $\mathcal{O}$ (or
  $\mathcal{O}^*$)
  whose weights are in $\overline{P}\subset \tb^\times$, we have
$$t_V(z,u)t_{V'}(w,u) = t_{V'}(w,u) t_V(z,u).$$\end{thm}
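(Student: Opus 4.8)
The plan is to derive the commutativity from the Yang--Baxter equation for the universal $R$-matrix $\mathcal{R}$, following the argument of \cite{Fre, FH}. The two algebraic inputs are the intertwining identity $\Delta^{\mathrm{op}}(x)\,\mathcal{R} = \mathcal{R}\,\Delta(x)$ for all $x\in U_q(\wh{\Glie})$, where $\Delta^{\mathrm{op}}$ denotes the opposite coproduct, together with the Yang--Baxter equation $\mathcal{R}_{12}\mathcal{R}_{13}\mathcal{R}_{23} = \mathcal{R}_{23}\mathcal{R}_{13}\mathcal{R}_{12}$. From the first of these, for any two modules $V, V'$ both from $\mathcal{O}$ (or both from $\mathcal{O}^*$) whose weights lie in $\overline{P}$, one obtains an intertwiner
$$\mathcal{R}_{V(z),V'(w)} := (\pi_{V(z)}\otimes\pi_{V'(w)})(\mathcal{R})$$
from $V(z)\otimes V'(w)$ to $V'(w)\otimes V(z)$; the hypothesis on the weights guarantees that, after multiplication by an appropriate scalar normalization, its matrix coefficients are genuine power series in $z, w$ and in the variables $v_i^{\pm1}$ lying in the completed rings described in Section \ref{trans}, in which $\mathcal{R}_{V(z),V'(w)}$ is invertible.

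Next I would specialize the Yang--Baxter equation by letting the first two tensor factors act on $V(z)$ and $V'(w)$ and leaving the third factor as $U_q(\wh{\mathfrak{b}}_-)$, in which the $L$-operators take values. This yields the $RLL$-relation
$$\mathcal{R}_{V(z),V'(w)}\,L_V(z)_{13}\,L_{V'}(w)_{23} = L_{V'}(w)_{23}\,L_V(z)_{13}\,\mathcal{R}_{V(z),V'(w)}$$
in (a suitable completion of) $\on{End}(V)\otimes\on{End}(V')[[z,w]]\wh{\otimes} U_q(\wh{\mathfrak{b}}_-)$, where the subscripts indicate the tensor factors on which each operator acts. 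Multiplying on the left by $\mathcal{R}_{V(z),V'(w)}^{-1}$ and applying the twisted trace $\on{Tr}_{V\otimes V',u} = \on{Tr}_{V,u}\otimes\on{Tr}_{V',u}$ over the first two factors, the left-hand side becomes $t_V(z,u)\,t_{V'}(w,u)$ by \eqref{alg}. On the right-hand side, since $\mathcal{R}$ commutes with $k_i\otimes k_i$, the operator $\mathcal{R}_{V(z),V'(w)}$ preserves the total weight and therefore commutes with the grading operator $\prod_i u_i^{\lambda_i}$ built into the twisted trace; hence by cyclicity of the trace the two occurrences of $\mathcal{R}_{V(z),V'(w)}$ cancel, leaving $t_{V'}(w,u)\,t_V(z,u)$. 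This is the desired commutativity. (The hypothesis that $V$ and $V'$ lie in the same category, $\mathcal{O}$ or $\mathcal{O}^*$, ensures that their transfer-matrices lie in compatible completed rings so that the products above are well-defined, as explained just before the theorem.)

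The hard part is not this formal computation but rather the verification that every step is legitimate in the infinite-dimensional setting of $\mathcal{O}$ and $\mathcal{O}^*$: the existence and invertibility of the normalized operator $\mathcal{R}_{V(z),V'(w)}$, the convergence of the twisted traces, and the term-by-term validity of cyclicity of the trace. This is precisely where the assumption that the weights lie in $\overline{P}$ enters --- it forces the relevant generating series in $z, w$ and in the variables $v_i$ (and $u_i$) into the completed rings of Section \ref{trans}, where all of these operations are well-defined. Once this bookkeeping is in place, which can be carried out as in \cite[Lemma 2]{Fre} and its extension in \cite{FH}, the theorem follows from the Yang--Baxter computation outlined above.
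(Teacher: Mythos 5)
Your proposed route is the classical $RLL$/cyclicity-of-trace argument, and it has a genuine gap in this setting: the operator $\mathcal{R}_{V(z),V'(w)} = (\pi_{V(z)}\otimes\pi_{V'(w)})(\mathcal{R})$ is not defined for general objects of the category $\mathcal{O}$. These are modules over the Borel subalgebra $U_q(\wh{\bb})$ only, whereas the second tensor factor of the universal $R$-matrix lies in (a completion of) $U_q(\wh{\bb}_-)$; so one cannot evaluate $\mathcal{R}$ in its second slot on $V'$ at all. Even when $V$ and $V'$ happen to extend to $U_q(\ghat)$-modules, they are typically infinite-dimensional, and the existence, convergence and invertibility of a normalized $\mathcal{R}_{V(z),V'(w)}$, together with term-by-term cyclicity of the twisted trace, are serious unresolved analytic issues rather than ``bookkeeping''. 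The references you invoke for this step do not supply it: \cite[Lemma 2]{Fre} and its extension in \cite{FH} establish only the additivity and multiplicativity properties \eqref{alg} and the resulting fact that $t_V(z,u)$ depends only on the class of $V$ in the Grothendieck ring.

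The proof in \cite{FH} --- which the paper deliberately sets up in the two sentences immediately preceding the theorem --- is much softer and avoids the $R$-matrix entirely. Absorbing the spectral twists into the modules, the multiplicativity in \eqref{alg} gives $t_V(z,u)\,t_{V'}(w,u) = t_{\tau_z^*V\otimes \tau_w^*V'}(u)$ and $t_{V'}(w,u)\,t_V(z,u) = t_{\tau_w^*V'\otimes \tau_z^*V}(u)$, and each side depends only on the Grothendieck class of the tensor product. The Grothendieck ring $K_0(\mathcal{O})$ (resp.\ $K_0(\mathcal{O}^*)$) is commutative, because the $q$-character homomorphism is an injective ring map into a commutative ring of formal series; hence $\tau_z^*V\otimes \tau_w^*V'$ and $\tau_w^*V'\otimes \tau_z^*V$ have the same composition factors and the two transfer-matrices coincide. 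If you want to retain an $R$-matrix proof you would have to restrict to finite-dimensional $U_q(\ghat)$-modules, where a rational normalized $R$-matrix does exist; for the categories $\mathcal{O}$ and $\mathcal{O}^*$ the Grothendieck-ring argument is the one that works.
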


\subsection{$X$-series as limits of the transfer-matrices}    \label{Rgeq}

Recall the notation $Q^+$ from section \ref{Liealg}.
%Let $R_{\geq 0}$ (resp. $R_{\leq 0}$) be the subset of
%$P$ whose elements are linear combinations of the simple roots
%with non-negative (resp. non-positive) coefficients. 
Let $V$ be an
  object of $\OO$ (resp. $\OO^*$) whose ordinary weights are in
  $Q^+$ (resp. $-Q^+$). Let us write
$$t_V(z,u) = \sum_{m\geq 0} t_V[m](u) z^m.$$ Then the Fourier
  coefficients $t_V[m](u)$, where $m\geq 0$ and $V$ runs over the
  classes of objects of the category $\mathcal{O}$ (resp.
  $\mathcal{O}^*$) satisfying the above condition, are mutually
  commuting elements of the algebra $U(\wh\bb_-)[[v_i^{-1}]]_{i\in
    I}$ (resp. $U(\wh\bb_-)[[v_i]]_{i\in I}$). Hence they generate a
  commutative subalgebra of this algebra. It follows that every
  element of this subalgebra, as well as the algebra itself, have a
  well-defined limit when $v_i^{-1} = 0$ $(i \in I)$ (resp. $v_i
  = 0$ $(i \in I)$).

  In particular, let $V=R_{i,a}^+$, which is an object of $\OO^*$
  satisfying the above condition. We are interested in the limit of
  $t_{R_{i,a}^+}(z,u)$ when $v_j = 0, \forall j \in I$, which we will
  express simply as $v=0$. This limit was computed in \cite[Theorem
    5.5]{FH}.

\begin{thm}    \label{limit}
For each $i \in I$, the limit of $t_{R_{i,a}^+}(z,u)$ at $v = 0$ is
equal to $X_i(za)$. 
%where $T_i(z)$ is the series given by formulas \eqref{fdser} and \eqref{wth}.
\end{thm}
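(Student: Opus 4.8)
This is \cite[Theorem 5.5]{FH}, and I sketch the structure of the argument. The strategy is: at $v=0$ the twisted trace defining $t_{R_{i,a}^+}(z,u)$ localizes onto the one‑dimensional extremal weight subspace of $R_{i,a}^+$, the surviving ``diagonal'' matrix element of the $L$‑operator is the Cartan--Drinfeld part of the universal $R$‑matrix $\mathcal{R}$, and this can be evaluated explicitly and matched with \eqref{fdser}. First I would identify $R_{i,a}^+$: by Proposition \ref{dualweight}, $(R_{i,a}^+)^*\simeq L_{i,a}^-=L(\Psib_{i,a}^{-1})$, so $R_{i,a}^+\simeq L'(\Psib_{i,a})$ is the simple lowest‑$\ell$‑weight $U_q(\wh\bb)$‑module with lowest $\ell$‑weight $\Psib_{i,a}$; in particular its $\varpi$‑weight‑$0$ subspace is the line $\C v_0$ spanned by a lowest $\ell$‑weight vector, on which $\phi^+_j(w)$ acts by $(\Psib_{i,a})_j(w)$, $e_j v_0=0$ for $j\in I$, and all other weights lie strictly below $0$ in $-Q^+$. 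By the discussion preceding Theorem \ref{limit}, $t_{R_{i,a}^+}(z,u)$ lies in $U_q(\wh\bb_-)[[z,v_j]]_{j\in I}[u_k^{\pm1}]_{k\in I}$, and in the expansion of $\on{Tr}_{R_{i,a}^+,u}$ over weight subspaces every term coming from a weight $\lambda\neq 0$ is divisible by some $v_k$. Hence $\lim_{v\to 0}t_{R_{i,a}^+}(z,u)$ equals the contribution of the line $\C v_0$, namely the diagonal matrix element $(\langle v_0^*,\,\cdot\,\rangle\otimes\on{Id})\bigl((\pi_{R_{i,a}^+(z)}\otimes\on{Id})(\mathcal{R})\bigr)\in U_q(\wh\bb_-)[[z]]$.

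Next I would invoke the triangular (Damiani / Khoroshkin--Tolstoy) factorization $\mathcal{R}=\mathcal{R}^+\mathcal{R}^0\mathcal{R}^-$, where $\mathcal{R}^\pm$ are the ordered products over the positive real affine roots (their first tensor factors lying in $U_q(\wh{\n}^{\pm})$) and $\mathcal{R}^0\in U_q(\wh{\Hlie})\,\wh\otimes\,U_q(\wh{\Hlie})$ is the imaginary‑root part. Since $v_0$ is killed by $e_j$ ($j\in I$) and every nonzero finite‑weight component of $\mathcal{R}^\pm$ either shifts the finite weight of $v_0$ (hence is off‑diagonal) or annihilates $v_0$, only the $1\otimes 1$ terms of $\mathcal{R}^\pm$ and the full $\mathcal{R}^0$ survive in the matrix element; thus the limit equals $(\langle v_0^*,\,\cdot\,\rangle\otimes\on{Id})\bigl((\pi_{R_{i,a}^+(z)}\otimes\on{Id})(\mathcal{R}^0)\bigr)\in U_q(\wh{\Hlie})[[z]]$. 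Finally, one plugs in the explicit formula for $\mathcal{R}^0$ — an exponential of $\sum_{m>0}\sum_{j,k}(\text{coeff})\,h_{j,m}\otimes h_{k,-m}$ whose coefficients are governed by the inverse of the $q$‑deformed Cartan matrix — and computes that its left tensor factors, acting on $v_0$ through $\phi^+_i$ (so $h_{i,r}$ acts by a scalar $\propto (za)^r$ after the $\tau_z$‑rescaling, using $\log\bigl((\Psib_{i,a})_i(w)\bigr)=\mp\sum_{m>0}(wa)^m/m$), produce the scalar series $\exp\bigl(\sum_{m>0}(za)^m\,\wt{h}_{i,-m}/([d_i]_q[m]_{q_i})\bigr)$ in the right factor, which is exactly $X_i(za)$ by \eqref{fdser} and \eqref{wth}.

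I expect the last step to be the main obstacle: one must use the precise normalization of the Cartan part $\mathcal{R}^0$ of the universal $R$‑matrix — in particular the appearance of the inverse $q$‑Cartan matrix $\wt{C}_{j,i}(q^m)$ — and reconcile it with the coefficients $1/([d_i]_q[m]_{q_i})$ in the definition \eqref{fdser}, which is a delicate bookkeeping of $q$‑integers; this is where the real work of \cite{FH} is done. By contrast, the localization of the trace onto the line $\C v_0$ and the vanishing of the real‑root parts of $\mathcal{R}$ on the diagonal matrix element are comparatively formal, given the structure theory of the category $\OO^*$ recalled above and the triangular decomposition of $\mathcal{R}$.
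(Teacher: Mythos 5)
Your proposal follows essentially the same route as the paper, which simply cites \cite[Theorem 5.5]{FH} for this statement and recaps the argument in the proof of Proposition \ref{limitt}: the twisted trace localizes at $v=0$ onto the one-dimensional weight-$0$ line of $R_{i,a}^+\simeq L'(\Psib_{i,a})$, only the Cartan factor $\mathcal{R}^0$ of the universal $R$-matrix survives in the diagonal matrix element for weight reasons, and evaluating it against the lowest $\ell$-weight $\Psib_{i,a}$ reproduces \eqref{fdser}. The only slip is the assertion that $e_j v_0=0$ for $j\in I$ --- a lowest $\ell$-weight vector is killed by the lowering Drinfeld generators, not by the $e_j$, and $e_j v_0\neq 0$ in general for the infinite-dimensional module $R_{i,a}^+$ --- but since your argument only uses that the real-root factors of $\mathcal{R}$ shift the finite weight, this does not affect the proof.
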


Thus, we see the transfer-matrices $t_{R_{i,a}^+}(z,u)$ simplify
dramatically in the limit $v = 0$: all of their Fourier coefficients
end up in the (commutative) Cartan-Drinfeld subalgebra
$U_q(\wh{\mathfrak h}_-)$ of $U_q(\wh\bb_-)$!

The results of \cite{FH} allow us to describe the eigenvalues of both
$t_{R_{i,a}^+}(z,u)$ and $X_i(za)$ on any $U_q(\ghat)$-module
$N$ which is a tensor product of simple finite-dimensional modules. It
is instructive to compare these eigenvalues.

The module $N$ has a highest weight vector $x$ whose
$\ell$-weight corresponds to a monomial $m = \prod_{j\in I,
  b\in\CC^\times}Y_{j,b}^{u_{j,b} (m)} \in \Yim$ with the ordinary
weight $\omega$. Therefore $x$ is an eigenvector of 
  $X_i(z)$. The corresponding eigenvalue $f_{i,m}(z)$ was found in \cite{FH}; it is given by formula
\eqref{fimz}. Moreover, by \cite[Theorem 4.1]{Fre2}, all other $\ell$-weights
of $N$ correspond to monomials of the form
\begin{equation}    \label{MA}
  M = m A_{i_1,a_1}^{-1}A_{i_2,a_2}^{-1}\cdots A_{i_N,a_N}^{-1}
\end{equation}
where $i_1,\cdots, i_N\in I$ and $a_1,\cdots, a_N\in\CC^\times$. 

This implies the following description the eigenvalues of $X_i(z)$

\begin{prop}\label{firstpol} The eigenvalue of $f_{i,m}(z)^{-1} X_i(z)$ on the
  $\ell$-weight subspace of $N$ corresponding to the $\ell$-weight $M$
  is equal to
\begin{equation}    \label{PX}
  \prod_{1\leq k\leq N, i_k = i} (1 - z a_k^{-1}).
\end{equation}
\end{prop}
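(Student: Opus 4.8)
The plan is to deduce the proposition directly from the multiplicativity of the pairing $\langle \cdot,\cdot\rangle$ in its monomial argument together with the explicit value of $\langle X_i(z), A_{j,a}\rangle$ computed in \cite{FH}. First I would record the shape of the $\ell$-weights of $N$: since $N$ is a tensor product of simple finite-dimensional $U_q(\ghat)$-modules, its $q$-character is the product of the $q$-characters of the tensor factors, so by \cite[Theorem 4.1]{Fre2} every $\ell$-weight of $N$ is of the form $M = m\,A_{i_1,a_1}^{-1}A_{i_2,a_2}^{-1}\cdots A_{i_N,a_N}^{-1}$ as in \eqref{MA}, where $m = \prod_{j\in I,\,b\in\CC^\times} Y_{j,b}^{u_{j,b}(m)}$ is the highest monomial of $N$ (the product of the highest monomials of the factors). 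Since $X_i(z)\in U_q(\wh{\Hlie})[[z]]$ lies in the commutative Cartan--Drinfeld subalgebra and commutes with all $\phi_j^+(z)$, $j\in I$, it preserves each $\ell$-weight subspace $N_M$, and its eigenvalue there depends only on $M$ and equals $\langle X_i(z), M\rangle$ by the very definition of the pairing (the fact that $N_M$ may be a generalized $\ell$-weight space is immaterial here, as we only record the eigenvalue, exactly as in \cite{FH}).

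Next I would carry out the computation. Writing $M = m\cdot\prod_{k=1}^N A_{i_k,a_k}^{-1}$ and applying the multiplicativity of $\langle X_i(z),\cdot\rangle$ on products of monomials, i.e. formula \eqref{mult}, gives
$$\langle X_i(z), M\rangle = \langle X_i(z), m\rangle\ \prod_{k=1}^N \langle X_i(z), A_{i_k,a_k}\rangle^{-1}.$$
By \eqref{XA} we have $\langle X_i(z), A_{i_k,a_k}\rangle = (1 - z a_k^{-1})^{-\delta_{i,i_k}}$, so $\langle X_i(z), A_{i_k,a_k}\rangle^{-1} = (1 - z a_k^{-1})^{\delta_{i,i_k}}$ and the product over $k$ collapses to $\prod_{1\le k\le N,\ i_k = i}(1 - z a_k^{-1})$. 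Since $\langle X_i(z), m\rangle = f_{i,m}(z)$ by the definition of $f_{i,m}(z)$ (equivalently, by \eqref{Xizm}, \eqref{fimz}), we conclude $\langle X_i(z), M\rangle = f_{i,m}(z)\prod_{1\le k\le N,\ i_k = i}(1 - z a_k^{-1})$, and dividing by $f_{i,m}(z)$ yields \eqref{PX}.

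I do not expect a genuinely hard step here: the proposition is immediate from the earlier formulas \eqref{mult} and \eqref{XA} once one knows that every $\ell$-weight of $N$ is obtained from the highest monomial $m$ by multiplying by inverse simple-root monomials. The only inputs that deserve a word of care are therefore (a) this last fact for the tensor product $N$, which follows from the finite-dimensional case \cite[Theorem 4.1]{Fre2} via multiplicativity of $q$-characters under tensor product, and (b) the legitimacy of dividing by $f_{i,m}(z)$, which is clear from \eqref{fimz} since each factor there is an invertible element of $\CC[[z]]$ (a power series with constant term $1$). Accordingly, I expect the write-up to be short: essentially the displayed identity above, preceded by the identification of the eigenvalue of $X_i(z)$ on $N_M$ with $\langle X_i(z), M\rangle$.
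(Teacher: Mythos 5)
Your proposal is correct and follows essentially the same route as the paper: the paper also derives Proposition \ref{firstpol} immediately from the form \eqref{MA} of the $\ell$-weights of $N$ (via \cite[Theorem 4.1]{Fre2}), the multiplicativity \eqref{mult} of the pairing, the value \eqref{XA} of $\langle X_i(z),A_{j,a}\rangle$, and the identification $\langle X_i(z),m\rangle = f_{i,m}(z)$. Your extra remarks on extending \eqref{MA} to the (possibly non-simple) tensor product $N$ and on the invertibility of $f_{i,m}(z)$ in $\CC[[z]]$ are sound and only make explicit what the paper leaves implicit.
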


Now let us discuss the eigenvalues of $t_{R_{i,a}^+}(z,u), i\in I, a
\in \C^\times$, on $N$. It is easy to see that $t_{R_{i,a}^+}(z,u) =
t_{R_{i,1}^+}(za,u)$, so it is enough to consider the eigenvalues of
$t_{R_{i,1}^+}(z,u), i\in I$.

%In this case the
%normalized transfer-matrix $(f_i(az))^{-1} t_V(z,u)$ is a
%generalization to an arbitrary quantum affine algebra $U_q(\ghat)$ of
%Baxter's $Q$-operator (corresponding to the case $\g=\sw_2$) that we
%introduced and studied in \cite{FH}.

Let $\lambda$ be a weight of
$N$ and $ht_i(\omega -
\lambda)$ the multiplicity of $\alpha_i$ in $\omega - \lambda$.

%We call the following property {\em Baxter
%    polynomiality}.

\begin{thm}\cite{FH}\label{catOpol} The operator
$$f_{i,m}(z)^{-1}t_{R_{i,1}^+}(z,u)\in
((\on{End}(N_\lambda))[[v_j]]_{j\in I})[z]$$
is a polynomial in $z$ of degree $ht_i(\omega - \lambda)$. 
\end{thm}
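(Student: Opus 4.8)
The plan is to reduce to the action on weight subspaces, establish rationality in $z$, and then pass from rationality to polynomiality via Baxter's classical analyticity argument applied to the generalized $TQ$-relations of \cite{FH}; this is essentially the argument of that paper.

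First I would fix a weight $\lambda$ of $N$ and set $\Phi_\lambda(z,u):=f_{i,m}(z)^{-1}\,t_{R_{i,1}^+}(z,u)|_{N_\lambda}$, which a priori is a power series in $z$ with coefficients in $\on{End}(N_\lambda)[[v_j]]_{j\in I}$; the claim is that it is a polynomial in $z$ of degree $ht_i(\omega-\lambda)$. Its specialisation at $v=0$ is already understood: by Theorem \ref{limit} it equals $f_{i,m}(z)^{-1}X_i(z)|_{N_\lambda}$, and since $X_i(z)\in U_q(\wh{\Hlie})$ this operator is block-diagonal along the $\ell$-weight decomposition $N_\lambda=\bigoplus_M N_M$; by Proposition \ref{firstpol} its block on $N_M$, with $M=m\prod_k A_{i_k,a_k}^{-1}$ and $\varpi(M)=\lambda$, has eigenvalue the polynomial $\prod_{k:\,i_k=i}(1-za_k^{-1})$, of degree exactly $\#\{k:i_k=i\}=ht_i(\omega-\lambda)$, using the Frenkel--Mukhin description \eqref{MA} of the $\ell$-weights of $N$ together with $\varpi(A_{j,b})=\alpha_j$. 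Thus the degree-$ht_i(\omega-\lambda)$ part of $\Phi_\lambda(z,u)$ is already nonzero at $v=0$, so the entire problem reduces to showing that $\Phi_\lambda(z,u)$ is a polynomial in $z$ of degree $\le ht_i(\omega-\lambda)$ for all $v$.

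Next I would prove rationality: $\Phi_\lambda(z,u)$ is an expansion of a rational matrix-valued function of $z$. For this one realizes $R_{i,1}^+$ (or, by duality, $L_{i,1}^-$) as a limit of finite-dimensional modules, as in the construction of the prefundamental representations in \cite{HJ}; matrix entries of transfer-matrices of finite-dimensional modules are Laurent polynomials in $z$ with coefficients in $\C[u^{\pm 1}]$, and controlling the limit yields that the entries of $\Phi_\lambda(z,u)$ are expansions of rational functions in $z$ whose denominators are products of linear factors $(1-zb)$ with the $b$ lying in finitely many $q^{\Z}$-orbits. The heart of the argument is then upgrading ``rational'' to ``polynomial'', and the engine is the generalized Baxter $TQ$-relation of \cite{FH} (see also the extended relations of \cite{FH4}). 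For each node $i\in I$ this is a functional equation which, applied to $N$, has the schematic form
\begin{multline*}
t_{V_i}(z,u)\,t_{R_{i,1}^+}(z,u)
= \mathfrak A_i(z)\,t_{R_{i,1}^+}(zq_i^{-2},u)\,\prod_j(\cdots) \\
+\ \mathfrak D_i(z,u)\,t_{R_{i,1}^+}(zq_i^{2},u)\,\prod_j(\cdots)
\ +\ \cdots,
\end{multline*}
where $V_i=L(Y_{i,1})$ is a fundamental representation, $\mathfrak A_i,\mathfrak D_i$ are explicit products of linear factors $(1-zb)$ coming from the vacuum (highest-weight) eigenvalue, each $\prod_j(\cdots)$ is a product of operators $t_{R_{j,\cdot}^+}$ at spectral parameters shifted by powers of $q$, and the trailing terms are present only when $\chi_q(V_i)$ has monomials beyond the two extremal ones. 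Since $V_i$ is finite-dimensional, $t_{V_i}(z,u)$ is a Laurent polynomial in $z$; combining this with the rationality just established, Baxter's classical analyticity argument applies entry by entry: a pole of $\Phi_\lambda$ at some $z_0\ne0$ would, through the relation, force a pole at $z_0q_i^{\pm2}$, hence on the whole infinite orbit $z_0q_i^{2\Z}$, which is impossible for a rational function because $q$ is not a root of unity. Running this for all nodes simultaneously shows that $\Phi_\lambda(z,u)$ is a polynomial matrix in $z$; comparing top-degree terms on the two sides of the $TQ$-relation bounds its degree above by $ht_i(\omega-\lambda)$, and the $v=0$ computation above supplies the matching lower bound, so the degree is exactly $ht_i(\omega-\lambda)$.

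The main obstacle is this last step. Establishing the $TQ$-relation itself is substantial: it rests on the explicit $\ell$-character of $R_{i,a}^+$ and on the structure of the universal $R$-matrix in $U_q(\wh\bb)\wh{\otimes}U_q(\wh\bb_-)$. Moreover, the analyticity argument must be organized with care, because the relation at node $i$ involves the operators $t_{R_{j,\cdot}^+}$ for the neighbours $j$ of $i$: one must either control the ``bad'' spectral parameters — the finitely many poles of the neighbouring operators and the zeros of $\mathfrak A_i,\mathfrak D_i$ — by an auxiliary induction, or run a joint induction over all nodes along a suitable order on the $\ell$-weights of $N$. Throughout, the twist variables $v_j$ have to be tracked, so that every conclusion is an identity of power series in the $v_j$ and not merely a statement at $v=0$.
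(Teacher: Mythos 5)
You should first note that the present paper does not prove Theorem \ref{catOpol} at all: it is imported from \cite{FH} (Theorems 5.9 and 5.17 there), so there is no proof in this text to match your argument against, and the comparison has to be with the argument of \cite{FH}. Your opening step is correct and uses exactly the ingredients the paper recalls: the $v=0$ specialization of $f_{i,m}(z)^{-1}t_{R_{i,1}^+}(z,u)$ is $f_{i,m}(z)^{-1}X_i(z)$ by Theorem \ref{limit}, it is diagonalizable along the $\ell$-weight decomposition of $N_\lambda$, and Proposition \ref{firstpol} together with \eqref{MA} gives the eigenvalue $\prod_{k:\,i_k=i}(1-za_k^{-1})$ of degree exactly $ht_i(\omega-\lambda)$; this correctly reduces the problem to polynomiality of degree at most $ht_i(\omega-\lambda)$.

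The genuine gap is in your main engine, the ``Baxter analyticity argument''. In a relation of the schematic form $t_{V_i}(z,u)\,Q(z)=\mathfrak A_i(z)Q(zq_i^{-2})\prod_j(\cdots)+\mathfrak D_i(z,u)Q(zq_i^{2})\prod_j(\cdots)+\cdots$, a pole of $Q$ at $z_0$ forces nothing at $z_0q_i^{\pm2}$, because the poles of the several right-hand terms can cancel against one another; and in higher rank the terms involve the neighbouring operators $t_{R_{j,\cdot}^+}$ at shifted arguments, so there is not even a single $q_i^{2\Z}$-orbit along which to propagate. The same cancellation defeats your degree bound ``by comparing top-degree terms''. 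This is precisely why the classical Baxter argument is an ansatz rather than a proof, and why \cite{FH} runs the logic in the opposite direction: polynomiality is proved there directly --- via the factorization of the universal $R$-matrix, the computation of its Cartan part $\mathcal R^0$ (which produces both the normalization $f_{i,m}(z)$ and the $v=0$ limit $X_i(z)$), and control of the trace over $R_{i,1}^+$ using the realization of the prefundamental representations of \cite{HJ} as limits of Kirillov--Reshetikhin modules with degrees bounded uniformly in the limit --- and the $TQ$-relations are then used as an output, to convert polynomiality into the description of the spectra, not as an input to prove it. Note also that your intermediate rationality step already contains the whole difficulty: if you can control the finite-dimensional approximation well enough to bound the denominators, you can bound the degrees uniformly and obtain polynomiality outright, making the analyticity step superfluous; if you cannot, the analyticity step does not rescue the argument because of the cancellation problem. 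As written, the proposal therefore does not close.
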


In the limit $v=0$, the operator $f_{i,m}(z)^{-1}t_{R_{i,1}^+}(z,u)$
restricted to the subspace $N_\lambda \subset N$ becomes the operator
$(f_{i,m}(z))^{-1}X_i(z)$ restricted to the subspace $N_\lambda
\subset N$. According to Theorem \ref{catOpol}, every eigenvalue of
the former operator is a polynomial in $z$ of degree $ht_i(\omega -
\lambda)$. Moreover, it is known that its roots (generically)
correspond to solutions of the corresponding Bethe Ansatz equations
(see \cite{FH}). On the other hand, the eigenvalues of the latter
operator are given by the polynomial \eqref{PX} of the same degree
$ht_i(\omega - \lambda)$. The roots of this polynomial can be found
directly from the corresponding $\ell$-weight (see formula
\eqref{MA}).

Thus, we obtain that in the limit $v \to 0$ each eigenvector of
$f_{i,m}(z)^{-1}t_{R_{i,1}^+}(z,u)$ tends to an $\ell$-weight
vector, while the corresponding set of solutions of the Bethe Ansatz
equations (describing the eigenvalue) tends to the numbers $a_i^{-1}$
in formula \eqref{PX} encoded by this $\ell$-weight according to
formula \eqref{MA}.

%$$(f_i(az))^{-1}T_i(z)\in
%(\text{End}((W)_\lambda))[z]$$
%is a polynomial in $z$ of degree $ht_i(\omega - \lambda)$.

We are going to introduce a large family of modules from the
category $\OO^*$ labeled by $w \in W$ and $i \in I$ (with the modules
corresponding to $w=e$ being $R_{i,a}^+, i \in I$) for
which we will conjecture the same polynomiality property (Conjecture
\ref{trm}).

%\section{Extended $TQ$-relations and orbits of $\ell$-weights}    \label{TQ}

\subsection{The $\ell$-weights $\Psib_{w(\omega_i),a}$}\label{defipsiw}

The first step is to recall the definition of the $\ell$-weights
$\Psib_{w(\omega_i),a}, i \in I, w \in W$, from \cite{FH4}. The transfer-matrices
corresponding to the simple modules associated to these weights are
the generalized Baxter $Q$-operators introduced in \cite{FH4} (up to a
normalization). In the following subsections we will recall the
$TQ$-relations which we proved in \cite{FH} and their extended
versions which we conjectured in \cite{FH4}. We will also formulate
a new dual version of these relations for the category $\mathcal{O}^*$
(see Section \ref{nver}).

Recall the monomial $Y_{w(\omega_i),1}$ introduced in Section
\ref{secextm}. For $i\in I$ and $a\in\mathbb{C}^\times$,
  we defined the following monomials in \cite{FH4}:
\begin{align}\label{wia} W_{i,a} = \begin{cases} Y_{i,a}&\text{ if $d_i = d$,}
\\ Y_{i,aq^{-1}}Y_{i,aq}&\text{ if $ d_i = d - 1$,} \\Y_{i,aq^{-2}}Y_{i,a}Y_{i,aq^2}&\text{ if $d_i = d - 2$.} \end{cases}\end{align}
As established in \cite{FH4}, if
  $C_{k,i} < - 1$, then $Y_{w(\omega_i),1}$ is a Laurent polynomial in
  $W_{k,b}$.

\begin{defi}\label{subs}\cite{FH4}
For $i\in I$, $a\in\mathbb{C}^\times$, and $w\in W$, define an
$\ell$-weight $\Psib_{w(\omega_i),a}$ by the following formulas.

The $\ell$-weight $\Psib_{w(\omega_i),1}$ is defined from the
factorization of $Y_{w(\omega_i),1}$ as a product of the variables 
$Y_{k,b}^{\pm 1}$ by replacing

$Y_{k,b}$ by $\Psib_{k,b^{-1}}$ if $d_i = d_k$,

$Y_{k,b}$ by $\Psib_{k,b^{-1}q}\Psib_{k,b^{-1}q^{-1}}$ if $C_{i,k} = -2$,

$Y_{k,b}$ by
  $\Psib_{k,b^{-1}q^2}\Psib_{k,b^{-1}}\Psib_{k,b^{-1}q^{-2}}$ if
$C_{i,k} = -3$,

$W_{k,b}$ by $\Psib_{k,b^{-1}}$ if $C_{k,i} < - 1$.

\noindent Finally, we define $\Psib_{w(\omega_i),a}$ from $\Psib_{w(\omega_i),1}$
by the change of variable $z\mapsto za$.
\end{defi}

For example, we have
$$Y_{s_i(\omega_i),1} = Y_{i,a}A_{i,aq_i}^{-1}\text{ and
}\Psib_{s_i(\omega_i),a} = \wt{\Psib}_{i,aq_i^{-2}}.$$

Recall the operators $T'_i$ on $\Yim'$ introduced in Section
\ref{genchari}.

\begin{prop}\cite{FH4}
We have
$$\Psib_{w(\omega_i),a} = \sigma \circ T'_w \circ
\sigma(\Psib_{i,a}), \qquad w \in W,
$$
where $T'_w = T'_{i_1} T'_{i_2} \ldots T'_{i_k}$
for any reduced decomposition $w = s_{i_1} \ldots s_{i_k}$ and
$\sigma$ is given by formula \eqref{sigm}.
\end{prop}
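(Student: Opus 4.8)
The plan is to derive the identity from two facts already recorded in the text: that the restriction of $T'_w$ to $\Yim\subset\Yim'$ is Chari's operator $T_w$ (combined with $T'_w[\omega]=[w(\omega)]$ and the embedding $Y_{i,b}=[\omega_i]\Psib_{i,bq_i^{-1}}\Psib_{i,bq_i}^{-1}$ of \eqref{YPsi}), and the formula $\sigma(Y_{w(\omega_i),a^{-1}})=[w(\omega_i)]\Psib_{w(\omega_i),aq_i^{-1}}\Psib_{w(\omega_i),aq_i}^{-1}$ recalled above from \cite{FH4}. Since $\sigma$ is an involution with $\sigma(\Psib_{i,a})=\Psib_{i,a^{-1}}^{-1}$ and $\sigma[\omega]=[\omega]$, applying $\sigma$ twice shows the assertion is equivalent to $F(a)H(a)=1$ for all $a$, where $F(b):=\sigma\bigl(T'_w(\Psib_{i,b^{-1}})\bigr)$ and $H(b):=\Psib_{w(\omega_i),b}$. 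So the whole problem is to prove that $K:=FH$ is identically $1$.

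First I would show $K$ is invariant under $b\mapsto bq_i^{2}$. From $T'_w|_{\Yim}=T_w$ and \eqref{YPsi} one computes inside $\Yim'$ that $Y_{w(\omega_i),b}=T_w(Y_{i,b})=[w(\omega_i)]\,T'_w(\Psib_{i,bq_i^{-1}})\,T'_w(\Psib_{i,bq_i})^{-1}$. Applying $\sigma$ after the substitution $b\mapsto a^{-1}$ and equating with the value supplied by \cite{FH4} (the common factor $[w(\omega_i)]$ cancels), one gets $F(aq_i)F(aq_i^{-1})^{-1}=H(aq_i^{-1})H(aq_i)^{-1}$ for all $a$; since everything commutes this rearranges to $K(aq_i)=K(aq_i^{-1})$, that is $K(b)=K(bq_i^{2})$.

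The second, and \emph{main}, step is to show such a $K$ is trivial. One checks directly from the definitions that $F(b)$ and $H(b)$ are Laurent monomials in the variables $\Psib_{l,c}$ with no $[\omega]$-factors — the operators $T'_j$ of \eqref{TiPj}, the automorphism $\sigma$, and the substitution of Definition \ref{subs} all preserve $\Z[\Psib_{l,c}^{\pm 1}]\subset\Yim'$ — and that every $\Psib_{l,c}$ occurring in $F(b)$ or $H(b)$ has spectral parameter of the form $c=b\,q^{\gamma}$ with $\gamma$ fixed, only finitely many pairs $(l,\gamma)$ occurring. For $H$ this is immediate from Definition \ref{subs}: $Y_{w(\omega_i),1}$ has spectral parameters in $q^{\Z}$, and the substitution followed by $z\mapsto zb$ multiplies each by $b$. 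For $F$ one tracks spectral parameters through a reduced word for $w$; the key point is that in \eqref{TiPj} the inner inversion $a\mapsto a^{-1}$ is undone by the outer $\sigma$, so each $T'_j$, and hence $T'_w$, sends a $\Psib$-monomial whose parameters are all proportional to $c$ (by fixed powers of $q$) to one with the same property, after which the outer $\sigma$ in the definition of $F$ inverts them all uniformly. Granting this, write $K(b)=\prod_{l,\gamma}\Psib_{l,bq^{\gamma}}^{k_{l,\gamma}}$ with finitely supported exponents; by algebraic independence of the $\Psib_{l,c}$, the relation $K(b)=K(bq_i^{2})$ forces $k_{l,\gamma}=k_{l,\gamma-2d_i}$ for all $l,\gamma$, and finiteness of the support then forces all $k_{l,\gamma}=0$. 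Hence $K\equiv 1$, which is the desired identity.

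I expect this parameter-tracking argument to be the main technical point. An alternative route avoids the cited formula from \cite{FH4} and proves $\sigma\circ T'_w\circ\sigma(\Psib_{i,a})=\Psib_{w(\omega_i),a}$ directly by induction on $\ell(w)$: writing $w=w's_j$ with $\ell(w)=\ell(w')+1$, the case $j\neq i$ is immediate since both sides are unchanged, while for $j=i$ one uses $\sigma T'_i\sigma(\Psib_{i,a})=\wt{\Psib}_{i,aq_i^{-2}}$ (a direct consequence of \eqref{TiPj}) together with $Y_{w(\omega_i),a}=Y_{w'(\omega_i),a}\,T_{w'}(A_{i,aq_i})^{-1}$. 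In that approach the obstacle becomes the term-by-term matching of the two resulting product formulas, a finite case check governed by the relations among $d_i$, $d_k$ and the Cartan entries $C_{i,k}$, in the same spirit as the base identity $\Psib_{s_i(\omega_i),1}=\wt{\Psib}_{i,q_i^{-2}}$.
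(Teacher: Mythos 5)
The paper does not prove this proposition at all --- it is quoted from \cite{FH4} --- so there is no internal argument to compare yours against; I can only assess your proposal on its own terms, and your first route is correct. The reduction to the functional equation $K(b)=K(bq_i^2)$ follows exactly as you say from the two recorded facts ($T'_w|_{\Yim}=T_w$ together with \eqref{YPsi} and $T'_w[\omega_i]=[w(\omega_i)]$, and the formula $\sigma(Y_{w(\omega_i),a^{-1}})=[w(\omega_i)]\Psib_{w(\omega_i),aq_i^{-1}}/\Psib_{w(\omega_i),aq_i}$), with the $[w(\omega_i)]$ factors cancelling. Your parameter-tracking step is also sound: by \eqref{TiPj} and \eqref{psfh2}, each $T'_j$ sends $\Psib_{l,c}$ to a finite $\Psib$-monomial (no $[\omega]$-factors) whose spectral parameters are $c$ times fixed powers of $q$, so $F(b)$ and $H(b)$ are finite Laurent monomials in the algebraically independent variables $\Psib_{l,bq^{\gamma}}$, and $q_i^2$-periodicity of the exponent function together with finiteness of its support forces $K\equiv 1$.

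Two caveats are worth recording. First, the formula $\sigma(Y_{w(\omega_i),a^{-1}})=[w(\omega_i)]\Psib_{w(\omega_i),aq_i^{-1}}/\Psib_{w(\omega_i),aq_i}$ is itself quoted from \cite[Proposition 4.5]{FH4} and is essentially equivalent in strength to the statement being proved: it determines $\Psib_{w(\omega_i),a}$ up to precisely the $q_i^2$-periodic ambiguity that your finiteness argument then eliminates. If in \cite{FH4} that formula is deduced from the present proposition, your first route is circular as a from-scratch proof, although it is a legitimate deduction from facts recorded in this paper. Second, your alternative induction on $\ell(w)$, which would avoid that dependence, is only sketched: the term-by-term matching of $T_{w'}(A_{i,aq_i})^{-1}$ against the substitution rules of Definition \ref{subs} (in particular the rule $W_{k,b}\mapsto\Psib_{k,b^{-1}}$ when $C_{k,i}<-1$) is where the real content of the cited result lies, and you do not carry it out.
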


 Recall that we have the bar involution $i \mapsto \ol{i}$
  on $I$ satisfying $\omega_{\ol{i}} = - w_0(\omega_i)$.

\begin{prop}\label{stabsig} The family
  $\{\Psib_{w(\omega_i),a}\}_{i\in I, w\in W,
  a\in\mathbb{C}^\times}$ is preserved by $\sigma$. More
  precisely, we have
\begin{equation}    \label{precise}
  \sigma(\Psib_{w(\omega_i),a}) = \Psib_{w
    w_0(\omega_{\overline{i}}),a^{-1}q^{r^\vee h^\vee}}.
\end{equation}
	\end{prop}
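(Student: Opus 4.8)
The plan is to reduce everything to the braid‑group description $\Psib_{w(\omega_i),a}=\sigma\circ T'_w\circ\sigma(\Psib_{i,a})$ of the preceding proposition, together with a single explicit computation with the operators $T'_i$ on $\Yim'$. Throughout write $\widetilde T'_v:=\sigma\circ T'_v\circ\sigma$; since $\sigma$ is an involution of $\Yim'$ by \eqref{sigm} and the $T'_v$ obey the braid relations, so do the $\widetilde T'_v$, and $\widetilde T'_u\widetilde T'_v=\widetilde T'_{uv}$ whenever $l(uv)=l(u)+l(v)$. The recalled formula reads $\Psib_{v(\omega_j),b}=\widetilde T'_v(\Psib_{j,b})$ for all $v\in W$, $j\in I$. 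Applying $\sigma$ and using $\sigma^2=\mathrm{id}$ gives immediately
$$\sigma(\Psib_{w(\omega_i),a})=T'_w\bigl(\sigma(\Psib_{i,a})\bigr)=T'_w\bigl(\Psib_{i,a^{-1}}^{-1}\bigr),$$
so it suffices to identify $T'_w(\Psib_{i,a^{-1}}^{-1})$ with $\Psib_{ww_0(\omega_{\overline i}),a^{-1}q^{r^\vee h^\vee}}=\widetilde T'_{ww_0}(\Psib_{\overline i,a^{-1}q^{r^\vee h^\vee}})$. This needs (i) the case $w=e$, and (ii) a compatibility between $T'_w$, $\widetilde T'_{w_0}$ and $\widetilde T'_{ww_0}$.

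For (i) I would compute $\Psib_{w_0(\omega_{\overline i}),1}$ from Definition \ref{subs}. Since $Y_{w_0(\omega_{\overline i}),1}=T_{w_0}(Y_{\overline i,1})=Y_{i,q^{r^\vee h^\vee}}^{-1}$, by the identity $T_{w_0}(Y_{j,b})=Y_{\overline j,bq^{r^\vee h^\vee}}^{-1}$ recalled in the proof of Theorem \ref{long} and $\overline{\overline i}=i$, and since $d_{\overline i}=d_i$ (as $w_0$ preserves the form $(\cdot,\cdot)$), the only substitution rule of Definition \ref{subs} that applies replaces $Y_{i,q^{r^\vee h^\vee}}$ by $\Psib_{i,q^{-r^\vee h^\vee}}$. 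Hence $\Psib_{w_0(\omega_{\overline i}),1}=\Psib_{i,q^{-r^\vee h^\vee}}^{-1}$, and the substitution $z\mapsto za^{-1}q^{r^\vee h^\vee}$ yields $\Psib_{w_0(\omega_{\overline i}),a^{-1}q^{r^\vee h^\vee}}=\Psib_{i,a^{-1}}^{-1}=\sigma(\Psib_{i,a})$. In particular $\Psib_{i,a^{-1}}^{-1}=\widetilde T'_{w_0}(\Psib_{\overline i,a^{-1}q^{r^\vee h^\vee}})$.

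The heart of the argument is the identity $(\sigma\circ T'_i)^2=\mathrm{id}_{\Yim'}$, equivalently $\widetilde T'_i=(T'_i)^{-1}$, for each $i\in I$. On $[\omega]$ and on the $\Psib_{j,a}$ with $j\neq i$ this is immediate from \eqref{sigm} and \eqref{TiPj}; on $\Psib_{i,a}$ one uses \eqref{TiPj} to get $\sigma T'_i(\Psib_{i,a})=\wt{\Psib}_{i,a^{-1}q_i^{-2}}^{-1}$, and the claim then reduces to verifying $T'_i(\wt{\Psib}_{i,c})=\Psib_{i,cq_i^2}$. For this, write $\wt{\Psib}_{i,c}=\Psib_{i,c}^{-1}\,P(c)$ where $P(c)$ collects the $\Psib_{j,\cdot}$-factors with $j\neq i$ in \eqref{psfh2}; then $T'_i$ fixes $P(c)$ and sends $\Psib_{i,c}^{-1}$ to $\sigma(\wt{\Psib}_{i,c^{-1}q_i^{-2}})=\Psib_{i,cq_i^2}\,\sigma\!\bigl(P(c^{-1}q_i^{-2})\bigr)$, so the computation comes down to checking $\sigma\bigl(P(c^{-1}q_i^{-2})\bigr)=P(c)^{-1}$. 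This holds block by block, using that the spectral shifts $\{q_i\}$, $\{1,q^2\}$, $\{q^{-1},q,q^3\}$ occurring in \eqref{psfh2} are arranged symmetrically enough to match after the inversion $b\mapsto b^{-1}$ of $\sigma$ combined with the shift in \eqref{TiPj}, and that $q_i=q$ in the cases $C_{i,j}\in\{-2,-3\}$. I expect this explicit verification to be the main obstacle; the rest is formal.

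Finally I assemble. From $\widetilde T'_i=(T'_i)^{-1}$ and a reduced decomposition $w=s_{i_1}\cdots s_{i_k}$ one gets $\widetilde T'_{w^{-1}}=\widetilde T'_{s_{i_k}}\cdots\widetilde T'_{s_{i_1}}=(T'_{s_{i_k}})^{-1}\cdots(T'_{s_{i_1}})^{-1}=(T'_w)^{-1}$. Since $w_0=w^{-1}(ww_0)$ with $l(w_0)=l(w^{-1})+l(ww_0)$ (because $l(ww_0)=l(w_0)-l(w)$ and $l(w_0)$ is maximal), we have $\widetilde T'_{w_0}=\widetilde T'_{w^{-1}}\,\widetilde T'_{ww_0}$, hence $\widetilde T'_{ww_0}=(\widetilde T'_{w^{-1}})^{-1}\,\widetilde T'_{w_0}=T'_w\circ\widetilde T'_{w_0}$. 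Combining this with the two displayed formulas and (i):
$$\sigma(\Psib_{w(\omega_i),a})=T'_w\bigl(\Psib_{i,a^{-1}}^{-1}\bigr)=T'_w\bigl(\widetilde T'_{w_0}(\Psib_{\overline i,a^{-1}q^{r^\vee h^\vee}})\bigr)=\widetilde T'_{ww_0}(\Psib_{\overline i,a^{-1}q^{r^\vee h^\vee}})=\Psib_{ww_0(\omega_{\overline i}),a^{-1}q^{r^\vee h^\vee}},$$
which is \eqref{precise}; that the family $\{\Psib_{w(\omega_i),a}\}$ is preserved by $\sigma$ is then the first assertion of the Proposition.
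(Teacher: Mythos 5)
Your argument is correct, but it is genuinely different from the one in the paper. The paper deduces both assertions from representation theory: by \cite{h3}, the automorphism $Y_{j,b}\mapsto Y_{j,b^{-1}}^{-1}$ sends the set of monomials of $\chi_q(L(Y_{i,a}))$ onto that of $\chi_q(L(Y_{\overline{i},a^{-1}q^{-r^\vee h^\vee}}))$; since $Y_{w(\omega_i),a}$ is the unique monomial of its weight and $w(\omega_i)=-ww_0(\omega_{\overline{i}})$, this pins down the image of each extremal monomial, and formula \eqref{precise} then follows from the compatibility of the substitutions of Definition \ref{subs} with $\sigma$. You instead stay entirely inside the braid-group formalism on $\Yim'$: you reduce everything to the identity $\sigma T'_i\sigma=(T'_i)^{-1}$, which comes down to the computation $T'_i(\wt{\Psib}_{i,c})=\Psib_{i,cq_i^2}$ (your cancellation of the off-diagonal blocks of \eqref{psfh2} against their $\sigma$-images does check out, using $q_i=q$ when $C_{i,j}\in\{-2,-3\}$), combine it with the length identity $l(w^{-1})+l(ww_0)=l(w_0)$, and anchor the induction at the single base case $\Psib_{w_0(\omega_{\overline{i}}),a^{-1}q^{r^\vee h^\vee}}=\Psib_{i,a^{-1}}^{-1}$, which you correctly extract from Definition \ref{subs}, the formula $T_{w_0}(Y_{j,b})=Y_{\overline{j},bq^{r^\vee h^\vee}}^{-1}$ and $d_{\overline{i}}=d_i$. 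The trade-off: the paper's route is shorter because it imports the duality results of \cite{h3} wholesale, while yours avoids any appeal to $q$-characters of fundamental representations beyond the recalled formula for $T_{w_0}$, at the cost of relying on the preceding proposition $\Psib_{w(\omega_i),a}=\sigma\circ T'_w\circ\sigma(\Psib_{i,a})$ and of the explicit verification above --- which, as a by-product, establishes the independently useful fact that $\sigma T'_i$ is an involution of $\Yim'$.
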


\begin{proof} Consider the set of all extremal monomials
  $Y_{w(\omega_i),a}$ occurring in the $q$-characters of
all fundamental representations $L(Y_{i,a})$, $i\in I$,
$a\in\mathbb{C}^\times$. According to the results of \cite{h3},
this set is preserved by the automorphism sending $Y_{i,a}\mapsto
Y_{i,a^{-1}}^{-1}$, $i\in I$, $a\in\mathbb{C}^\times$ 
(note that this automorphism was denoted by $\sigma$ in \cite{h3}, but
this is not the automorphism $\sigma$ that 
we use in the present paper for the $Y_{i,a}$ variables). This
automorphism has the same effect on the variables $Y_{i,a}$ as our $\sigma$ has 
on the variables $\Psib_{i,a}$. Since the
    substitutions in Definition
\ref{subs} are compatible with $\sigma$, we obtain the result. 

To prove formula \eqref{precise}, we use the result established in
\cite{h3} that the image of the set of
all monomials occurring in $\chi_q(L(Y_{i,a}))$ under the above
automorphism is the set of monomials occurring
in $\chi_q(L(Y_{\overline{i},a^{-1}q^{-r^\vee h^\vee}}))$. Since $Y_{w(\omega_i),a}$ is the 
unique monomial of weight $w(\omega_i)$ in $\chi_q(L(Y_{i,a}))$ and $w(\omega_i) = -w w_0(\omega_{\overline{i}})$, 
this implies that under this automorphism, we have $Y_{w(\omega_i),a}
\mapsto Y_{w w_0(\omega_{\overline{i}}),a^{-1}q^{-r^\vee h^\vee}}$.
We then obtain formula \eqref{precise} by using Definition \ref{subs}.
\end{proof}

\begin{example} For $\g=sl_2$, we have $\sigma(\Psib_{\omega_1,a}) = \sigma(\Psib_{1,a}) = \Psib_{1,a^{-1}}^{-1} = \Psib_{-\omega_1,a^{-1}q^2}$ and $\sigma(\Psib_{-\omega_1,a}) = \sigma(\Psib_{1,aq^{-2}}^{-1}) = \Psib_{1,a^{-1}q^2} = \Psib_{\omega_1,a^{-1}q^2}$.
\end{example}

Next, we define a ring automorphism $\Omega$ of $\Yim'$ by the formula
\begin{equation}   
\Omega(\Psib_{i,a}) := \Psib_{i,a^{-1}}, \qquad i\in I, \quad
a\in\mathbb{C}^\times; \qquad \Omega([\omega]) = [\omega], \quad
\omega \in P.
\end{equation}

Since $\sigma$ is the composition of $\Omega$ and taking the inverse,
the next statement follows from Proposition \ref{stabsig}.

\begin{prop}\label{imOm} The image of the family $\{\Psib_{w(\omega_i),a}\}_{i\in I,
  a\in\mathbb{C}^\times, w\in W}$ under $\Omega$ is the family
  $\{ \Psib_{w(\omega_i),a}^{-1}\}_{i\in I,
  a\in\mathbb{C}^\times, w\in W}$. More precisely,
\begin{equation}    \label{precise1}
  \Omega(\Psib_{w(\omega_i),a}) = \Psib_{w
    w_0(\omega_{\overline{i}}),a^{-1}q^{r^\vee h^\vee} }^{-1}.
\end{equation}
\end{prop}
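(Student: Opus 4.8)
The plan is to read this off directly from Proposition~\ref{stabsig}, by comparing the automorphism $\Omega$ of $\Yim'$ with the automorphism $\sigma$. First I would record the observation, implicit in Definition~\ref{subs}, that every $\ell$-weight $\Psib_{w(\omega_i),a}$ is a Laurent monomial in the variables $\Psib_{k,b}$ alone, with no factor $[\omega]$ occurring: indeed $\Psib_{w(\omega_i),1}$ is produced from the factorization of the monomial $Y_{w(\omega_i),1}\in\mc M\subset\Yim$ into the $Y_{k,b}^{\pm1}$ by substituting for each $Y_{k,b}$ (or each $W_{k,b}$) a monomial in the $\Psib$'s, and the subsequent change of variable $z\mapsto za$ does not affect this. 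Let $\mc M'_0\subset\mc M'$ denote the subgroup freely generated by $\{\Psib_{k,b}\}_{k\in I,b\in\C^\times}$; then $\Psib_{w(\omega_i),a}\in\mc M'_0$ for all $i\in I$, $w\in W$, $a\in\C^\times$.

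Next I would check that on $\mc M'_0$ the map $\sigma$ equals the composition of $\Omega$ with inversion. On the free generators this is immediate: $\sigma(\Psib_{k,b})=\Psib_{k,b^{-1}}^{-1}=\bigl(\Omega(\Psib_{k,b})\bigr)^{-1}$; since $\mc M'_0$ is abelian, both $\sigma$ and $x\mapsto\Omega(x)^{-1}$ are group endomorphisms of $\mc M'_0$ which agree on a generating set, hence coincide on all of $\mc M'_0$. In particular $\Omega(\Psib_{w(\omega_i),a})=\sigma(\Psib_{w(\omega_i),a})^{-1}$. Applying Proposition~\ref{stabsig}, which gives $\sigma(\Psib_{w(\omega_i),a})=\Psib_{ww_0(\omega_{\overline i}),\,a^{-1}q^{r^\vee h^\vee}}$, and taking inverses yields
\[
\Omega(\Psib_{w(\omega_i),a})=\Psib_{ww_0(\omega_{\overline i}),\,a^{-1}q^{r^\vee h^\vee}}^{-1},
\]
which is precisely formula~\eqref{precise1}.

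The first assertion of the proposition then follows, since $w\mapsto ww_0$ is a bijection of $W$, $i\mapsto\overline i$ an involution of $I$, and $a\mapsto a^{-1}q^{r^\vee h^\vee}$ a bijection of $\C^\times$, so $(w,i,a)\mapsto(ww_0,\overline i,a^{-1}q^{r^\vee h^\vee})$ merely permutes the indexing set of the family $\{\Psib_{w(\omega_i),a}\}$. There is no real obstacle in this argument; the only step warranting attention is the first one --- verifying that the $\Psib_{w(\omega_i),a}$ carry no $[\omega]$-component --- since it is exactly this that licenses replacing $\sigma$ by $(\,\cdot\,)^{-1}\circ\Omega$ on these elements, after which the statement is an immediate consequence of Proposition~\ref{stabsig}.
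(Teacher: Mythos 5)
Your proof is correct and follows essentially the same route as the paper, which simply observes that $\sigma$ is the composition of $\Omega$ with inversion and then invokes Proposition \ref{stabsig}. Your additional check that the $\Psib_{w(\omega_i),a}$ contain no $[\omega]$-factor (where $\sigma$ and $(\,\cdot\,)^{-1}\circ\Omega$ would disagree) is a worthwhile point of care that the paper leaves implicit.
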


\begin{example}\label{exom} For $\g=sl_2$, we have $\Omega(\Psib_{\omega_1,a}) = \Omega(\Psib_{1,a}) = \Psib_{1,a^{-1}} = \Psib_{-\omega_1,a^{-1}q^2}^{-1}$ and $\Omega(\Psib_{-\omega_1,a}) = \Omega(\Psib_{1,aq^{-2}}^{-1}) = \Psib_{1,a^{-1}q^2}^{-1} = \Psib_{\omega_1,a^{-1}q^2}^{-1}$.
		\end{example}

\subsection{Extended $TQ$-relations}    \label{exten}

The following is the {\em Extended $TQ$-relations Conjecture}
 (in $K_0(\OO)$) which we formulated in \cite{FH4}.

\begin{conj}    \label{exttq}
  Let $w\in W$ and let $V$ be a finite-dimensional simple
  $U_q(\wh\g)$-module. Replace every variable $Y_{i,a}, i
  \in I$, appearing in the $q$-character $\chi_q(V)$ with
$$Y_{i,a} \mapsto [w(\omega_i)]
  \dfrac{[L(\Psib_{w(\omega_i),aq_i^{-1}})]}{[L(\Psib_{w(\omega_i),aq_i})]}.
  $$
 By equating the resulting expression with $[V]$ and
  clearing the denominators, we obtain an algebraic
  relation in $K_0(\OO)$.
\end{conj}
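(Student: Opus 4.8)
The plan is to deduce Conjecture~\ref{exttq} from its $w=e$ case --- which is the $TQ$-relation of \cite{FH} --- by transporting that relation across the Weyl group; the extremal monomial property just proved for simple reflections (Theorem~\ref{refl}) is meant to play here the role that the highest monomial property of \cite{Fre2} plays in \cite{FH}. First I would reduce to fundamental representations: $V\mapsto[V]$ and $\chi_q$ are ring homomorphisms, $[V\otimes V']=[V]\,[V']$ in $K_0(\mc C)\subset K_0(\OO)$, and the rule $Y_{i,a}\mapsto[w(\omega_i)]\,[L(\Psib_{w(\omega_i),aq_i^{-1}})]\,[L(\Psib_{w(\omega_i),aq_i})]^{-1}$ extends to a ring homomorphism of $\Yim$ into the field of fractions of $K_0(\OO)$, so that the assertion ``substitute into $\chi_q(V)$ and clear denominators to obtain $[V]$'' is multiplicative in $V$. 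Since $K_0(\mc C)$ is the polynomial ring on the classes $[L(Y_{i,a})]$, it suffices to prove the conjecture for $V=L(Y_{i,a})$, and then, after a twist by $\tau_a$, for $V=L(Y_{i,1})$. At this point, by Theorem~\ref{refl}, the monomials of $\chi_q(L(Y_{i,1}))$ are $Y_{w(\omega_i),1}\prod_{j,b}(A^w_{j,b})^{-n_{j,b}}$ with $n_{j,b}\ge 0$, which pins down precisely the $\ell$-weights that must occur in the relation being built.

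For $w=e$ the statement is the $TQ$-relation of \cite{FH}. To move to a general $w$ I would iterate the extended $QQ$-relations of \cite{FH4} along a reduced word $w=s_{i_1}\cdots s_{i_k}$: passing from $w'=s_{i_2}\cdots s_{i_k}$ to $w=s_{i_1}w'$ modifies only the $\Psib_{i_1,\cdot}$-factors of the relevant $\ell$-weights (by formula~\eqref{TiPj} together with the identity $\Psib_{w(\omega_i),a}=\sigma\circ T'_w\circ\sigma(\Psib_{i,a})$ recalled above), and the corresponding $QQ$-relation is exactly the three-term identity in $K_0(\OO)$ effecting that modification; substituting it into the $w'$-relation and re-clearing denominators should produce the $w$-relation, the one-dimensional factors $[w(\omega_i)]$ being kept consistent via $\varpi(\Psib_{w(\omega_i),a})=w(\omega_i)$. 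For the first step ($e\to s_i$) one uses the genuine $QQ$-system, which is known; this already yields Conjecture~\ref{exttq} for simple reflections, consistently with the fact that the polynomiality input needed below is available there by Theorem~\ref{symsimp}.

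The hard part is that the operators underlying the Weyl group action of \cite{FH3,FH4} do \emph{not} permute monomials: the image of a single monomial, hence of $[L(\Psib_{\omega_i,a})]$, is an infinite $\Z$-linear combination of classes of infinite-dimensional objects of $\OO$, and accordingly the extended $QQ$-relations are presently conjectural beyond a simple reflection. So the transport described above a priori only yields an identity between infinite sums, and the real substance of Conjecture~\ref{exttq} is that this identity truncates to the claimed \emph{finite} polynomial relation. That truncation is controlled by polynomiality of the renormalized generalized Baxter operators $t_{w(\omega_i),a}(z,u)$ --- equivalently, in the limit $v\to 0$, by polynomiality of the renormalized $X$-series $X^N_{w(\omega_i)}(z)$, which is our Conjecture~\ref{twpol}. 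Thus I expect the conjecture to become provable in general only in tandem with that polynomiality, and the genuine obstacle --- understanding the composition series in $\OO$ of the modules $L(\Psib_{w(\omega_i),a})$ well enough to write down the requisite short exact sequences --- is at present surmountable only for $w=e$, for simple reflections, and for $w_0$.
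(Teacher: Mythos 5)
The statement you were asked to prove is Conjecture \ref{exttq}: the paper does not prove it, and explicitly records it as one of the open conjectures formulated in \cite{FH4}, with only the cases $w=w_0$ (from \cite{FH}) and $w=s_i$ (from \cite{FH2,FHR}) established in prior work. So there is no proof in the paper to compare yours against, and --- to your credit --- your text does not actually claim to be a proof either: it is a reduction strategy whose two essential inputs you yourself flag as conjectural.

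Concretely, the gap is exactly where you locate it. The reduction to fundamental representations is sound ($K_0(\mathcal{C})$ is polynomial on the classes $[L(Y_{i,a})]$, the substitution is multiplicative, and a twist by $\tau_a$ reduces to $a=1$), and transporting the known relation along a reduced word via three-term $QQ$-type identities is the natural strategy. But (i) the extended $QQ$-relations you would iterate are not available beyond the first step --- precisely because the Weyl group action of \cite{FH3,FH4} does not send classes of simple objects to classes of simple objects, the passage from $w'$ to $s_{i_1}w'$ does not automatically convert the $w'$-relation into the $w$-relation in $K_0(\OO)$; and (ii) even granting such identities, the truncation of the resulting infinite expression to a finite relation among the classes $[L(\Psib_{w(\omega_i),\cdot})]$ is governed by the polynomiality Conjectures \ref{twpol} and \ref{trm}, which are themselves open except for $w=e$, simple reflections, and $w_0$. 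One factual point to check: the paper attributes the case of Conjecture \ref{exttq} proved in \cite{FH} to $w=w_0$, and the simple-reflection case to \cite{FH2,FHR} (folded integrable models), not to a bare $QQ$-system argument; under the normalization \eqref{YPsi} and Definition \ref{subs} you should verify which element of $W$ your induction actually starts from. As a proof the proposal is incomplete; as an account of why the statement remains a conjecture and of what would be needed to settle it, it is essentially consistent with the paper's own discussion in Sections \ref{exten}--\ref{tmconj}.
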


The generalized Baxter $Q$-operators $Q_{w(\omega_i),a}, i \in I, w
\in W$, were defined in \cite{FH4} as the transfer-matrices
corresponding to the simple modules $L(\Psib_{w(\omega_i),a})$ in the
category $\OO$, divided by their ordinary characters. Hence they (and
their eigenvalues) should satisfy the same relations as the classes of
these simple modules in $K_0(\OO)$ -- these are the relations stated
in the above conjecture. This explains the term ``$TQ$-relations''
(see the Introduction of \cite{FH} for more details on the origins of
these relations).
  
For $w=w_0$, we proved Conjecture \ref{exttq} in \cite{FH}.
In the case
when $w$ is a simple reflection, $w=s_i, i \in I$, we proved this
conjecture in \cite{FH2, FHR}.

  These results motivated us to define in
  \cite{FH3} the Weyl group action on an extension of
  $\Yim$, such that the subring of its invariants in $\Yim$ is equal
  to the image of the $q$-character homomorphism $\chi_q$ (for more
  details, see \cite[Section 3.5]{FH4}).

\subsection{Dual extended $TQ$-relations}\label{nver} 

Analogous relations can be written for the
classes of simple modules in the category
$\mathcal{O}^*$. We call
them the {\em Dual Extended $TQ$-relations} (in
  $K_0(\OO^*)$).

\begin{conj}    \label{exttqd}
  Let $w\in W$ and let $V$ be a finite-dimensional simple
  $U_q(\wh\g)$-module. Replace every variable $Y_{i,a}, i
  \in I$, appearing in the $q$-character $\chi_q(V)$ with 
  $$Y_{i,a} \mapsto [-w(\omega_i)]
  \dfrac{[L'(\Psib_{w(\omega_i),aq_i^{-1}}^{-1})]}{[L'(\Psib_{w(\omega_i),aq_i}^{-1})]}.
  $$
By equating the resulting expression with
    $[\tau_{q^{-2h^\vee r^\vee}}(V^*)]$
    and clearing the denominators, we obtain an algebraic relation in
    $K_0(\OO^*)$.	
	\end{conj}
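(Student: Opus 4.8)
The plan is to deduce the Dual Extended $TQ$-relations (Conjecture \ref{exttqd}) from the Extended $TQ$-relations (Conjecture \ref{exttq}) in $K_0(\OO)$ by applying the duality functor $V \mapsto V^*$ together with the twist $\tau_{q^{-2h^\vee r^\vee}}$. The key point is that the duality functor sends the category $\OO$ to the category $\OO^*$ (by the very definition of $\OO^*$ in Section \ref{dualcat}), hence induces a ring anti-isomorphism, in fact a ring isomorphism on the level of Grothendieck rings since these are commutative, $K_0(\OO) \to K_0(\OO^*)$. Under this isomorphism the class $[L(\Psib)]$ is sent to $[(L(\Psib))^*] = [L'(\Psib^{-1})]$ by Proposition \ref{dualweight}, and in particular $[L(\Psib_{w(\omega_i),a})] \mapsto [L'(\Psib_{w(\omega_i),a}^{-1})]$, while the one-dimensional module $[w(\omega_i)]$ is sent to $[-w(\omega_i)]$.

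First I would take the algebraic relation in $K_0(\OO)$ produced by Conjecture \ref{exttq}: equating $[V]$ with the expression obtained by substituting $Y_{i,a} \mapsto [w(\omega_i)] [L(\Psib_{w(\omega_i),aq_i^{-1}})]/[L(\Psib_{w(\omega_i),aq_i})]$ into $\chi_q(V)$ and clearing denominators. Then I would apply the Grothendieck ring isomorphism $K_0(\OO) \to K_0(\OO^*)$ induced by graded duality to both sides of this relation. On the right-hand side, since the substitution is a ring-homomorphic operation on $\chi_q(V) \in \Yim$ and duality is a ring homomorphism, the image is obtained by the substitution $Y_{i,a} \mapsto [-w(\omega_i)] [L'(\Psib_{w(\omega_i),aq_i^{-1}}^{-1})]/[L'(\Psib_{w(\omega_i),aq_i}^{-1})]$. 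On the left-hand side, $[V] \mapsto [V^*]$. To match the statement of Conjecture \ref{exttqd}, which has $[\tau_{q^{-2h^\vee r^\vee}}(V^*)]$, I would need the compatibility of duality with the twist automorphism $\tau_a$ on $U_q(\wh\bb)$: namely $(\tau_a V)^* \simeq \tau_{a}(V^*)$ (or the appropriate inverse), which follows from the fact that $\tau_a$ is a Hopf algebra automorphism commuting with the antipode up to the grading, combined with Remark \ref{tau} relating $(V^*)^*$ to $\tau_{q^{-2r^\vee h^\vee}}(V)$. The twist by $\tau_{q^{-2h^\vee r^\vee}}$ on the whole relation can be absorbed because it is a ring automorphism of $K_0(\OO^*)$, and one checks it acts compatibly with the shift $a \mapsto a q^{\pm 2h^\vee r^\vee}$ appearing in the spectral parameters.

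The main obstacle, I expect, is bookkeeping the spectral parameter shifts correctly: applying $\tau$ and duality each introduces multiplicative shifts in the variable $a$, and one must verify that these exactly line up so that the substitution on the right-hand side comes out with $aq_i^{-1}$ and $aq_i$ (and not shifted versions thereof) while the left-hand side is exactly $[\tau_{q^{-2h^\vee r^\vee}}(V^*)]$. This is essentially the same $q^{h^\vee r^\vee}$-type shift that appears in Proposition \ref{stabsig} and Proposition \ref{imOm}, where $\sigma$ and $\Omega$ act on $\Psib_{w(\omega_i),a}$ with the shift $a \mapsto a^{-1}q^{r^\vee h^\vee}$; indeed the automorphism $\Omega$ is precisely the shadow on $\ell$-weights of the duality-plus-twist operation, so I would use Proposition \ref{imOm} (i.e. $\Omega(\Psib_{w(\omega_i),a}) = \Psib_{ww_0(\omega_{\overline i}),a^{-1}q^{r^\vee h^\vee}}^{-1}$) as the precise dictionary translating the $K_0(\OO)$ relation for $w$ into the $K_0(\OO^*)$ relation, possibly for $ww_0$ rather than $w$; since $w$ ranges over all of $W$ this reindexing is harmless. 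A secondary subtlety is checking that clearing denominators is compatible with all these operations, but since everything takes place in commutative rings and the denominators are classes of invertible (one-dimensional-tensor) objects, this is routine once the dictionary is in place.
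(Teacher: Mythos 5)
Your argument is, in substance, the same duality argument the paper itself gives --- but the paper gives it as the proof of the Proposition immediately following the statement, namely that Conjecture \ref{exttqd} is \emph{equivalent} to Conjecture \ref{exttq}, not as a proof of Conjecture \ref{exttqd} itself. The statement you were asked to prove is a conjecture that remains open in the paper: it is only established (via the equivalence) in the cases where Conjecture \ref{exttq} is known, i.e.\ $w=w_0$ (proved in \cite{FH}) and $w$ a simple reflection (proved in \cite{FH2, FHR}). Your proposal explicitly takes Conjecture \ref{exttq} as its input, so at best it proves the implication ``\ref{exttq} $\Rightarrow$ \ref{exttqd}''; since \ref{exttq} is itself conjectural for general $w$, this is not a proof of the statement. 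That is the genuine gap. Within the implication itself, your ingredients match the paper's: Proposition \ref{dualweight} giving $L'(\Psib^{-1})^* \simeq L(\Psib)$, Remark \ref{tau} giving $(\tau_{q^{-2h^\vee r^\vee}}(V^*))^* \simeq V$, and $[\lambda]^* = [-\lambda]$. The detour through $\Omega$ and Proposition \ref{imOm} is unnecessary for the equivalence (that rewriting is the content of the separate Proposition \ref{Omegarel}), and no reindexing $w \mapsto ww_0$ is needed.

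One further technical caveat about the direction in which you dualize. You propose a ring homomorphism $K_0(\OO) \to K_0(\OO^*)$ induced by $M \mapsto M^*$, but by the very definition of $\OO^*$ (an object $W$ lies in $\OO^*$ iff $W^*$ lies in $\OO$), verifying that $M^* \in \OO^*$ for $M \in \OO$ requires identifying $(M^*)^*$, and Remark \ref{tau} warns that for a general object of $\OO$ this is unclear (left and right duals need not agree, even up to a twist). The paper sidesteps this by dualizing in the clean direction, from the $\OO^*$ relation to the $\OO$ relation, using Proposition \ref{dualweight} only on the specific simple modules involved. If you insist on going from $\OO$ to $\OO^*$, you should restrict the dualization to the specific classes appearing in the relation (the finite-dimensional $V$, the one-dimensional $[\lambda]$, and the $L(\Psib_{w(\omega_i),a})$, for which Proposition \ref{dualweight} identifies a preimage under $(\cdot)^*$) rather than invoking a globally defined isomorphism of Grothendieck rings.
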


\begin{prop} Conjecture \ref{exttqd} is equivalent to Conjecture
  \ref{exttq}.
\end{prop}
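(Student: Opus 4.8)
The plan is to deduce each of the two $TQ$-relations from the other by applying the graded duality functor $V \mapsto V^*$, which exchanges the categories $\OO$ and $\OO^*$.

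First I would assemble the properties of duality that are needed. The assignment $W \mapsto W^*$ is a contravariant exact functor $\OO^* \to \OO$; since $(W \otimes W')^* \simeq (W')^* \otimes W^*$ and the completed Grothendieck rings are commutative, it induces a \emph{ring} homomorphism $\Phi \colon K_0(\OO^*) \to K_0(\OO)$. By Proposition \ref{dualweight} it sends $[L'(\Psib)]$ to $[L(\Psib^{-1})]$; since $\Psib \in \mfr$ if and only if $\Psib^{-1} \in \mfr$, $\Phi$ is a bijection on simple objects, hence a ring isomorphism. I would also record its values on the remaining building blocks occurring in the $TQ$-relations: $\Phi([\mu]) = [-\mu]$ for $\mu \in P$, because the graded dual of a one-dimensional module carries the opposite weight, and $\Phi([\tau_{q^{-2h^\vee r^\vee}}(V^*)]) = [V]$ for every finite-dimensional $U_q(\ghat)$-module $V$, which is exactly the double-dual identity of Remark \ref{tau} (together with the fact that the antipode preserves the $\Z$-grading, so that graded duality commutes with the twists $\tau_a$).

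Next I would apply $\Phi$ to the relation of Conjecture \ref{exttqd}, for a fixed $w \in W$ and a fixed finite-dimensional simple module $V$. Since $\chi_q(V)$ is a finite sum of monomials in the $Y_{i,a}^{\pm 1}$, after clearing denominators Conjecture \ref{exttqd} is an honest identity among finitely many classes in $K_0(\OO^*)$, so $\Phi$ may be applied to it term by term. Using $(\Psib_{w(\omega_i),b}^{-1})^{-1} = \Psib_{w(\omega_i),b}$ and the three identities above, the substitution $Y_{i,a} \mapsto [-w(\omega_i)]\,[L'(\Psib_{w(\omega_i),aq_i^{-1}}^{-1})]/[L'(\Psib_{w(\omega_i),aq_i}^{-1})]$ is carried to $Y_{i,a} \mapsto [w(\omega_i)]\,[L(\Psib_{w(\omega_i),aq_i^{-1}})]/[L(\Psib_{w(\omega_i),aq_i})]$, the class $[\tau_{q^{-2h^\vee r^\vee}}(V^*)]$ is carried to $[V]$, and the combinatorial shape of the substituted $q$-character is unchanged because it is the same element $\chi_q(V)$ that enters both conjectures. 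Hence $\Phi$ sends the relation of Conjecture \ref{exttqd} for $(w,V)$ exactly to the relation of Conjecture \ref{exttq} for $(w,V)$. Since $\Phi$ is a ring isomorphism, one relation holds if and only if the other does; running the argument with $\Phi^{-1}$ (which satisfies $\Phi^{-1}([L(\Psib)]) = [L'(\Psib^{-1})]$ and $\Phi^{-1}([V]) = [\tau_{q^{-2h^\vee r^\vee}}(V^*)]$) recovers Conjecture \ref{exttqd} from Conjecture \ref{exttq}.

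The step I expect to require the most care is pinning down $\Phi$ on the distinguished classes, i.e.\ matching both the inversion $\Psib \mapsto \Psib^{-1}$ of $\ell$-weights from Proposition \ref{dualweight} and the spectral shift $\tau_{q^{-2h^\vee r^\vee}}$ coming from the double dual; once this bookkeeping is done, the equivalence is a formal consequence of $\Phi$ being a ring isomorphism. In particular, no convergence or completion subtleties intervene, since for a fixed pair $(w,V)$ each relation involves only finitely many classes.
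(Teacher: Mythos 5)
Your proposal is correct and follows essentially the same route as the paper: both arguments dualize the relation of Conjecture \ref{exttqd} term by term, using Proposition \ref{dualweight} to convert $[L'(\Psib^{-1})]$ into $[L(\Psib)]$, Remark \ref{tau} to convert $[\tau_{q^{-2h^\vee r^\vee}}(V^*)]$ into $[V]$, and $[\lambda]^* \simeq [-\lambda]$ for the one-dimensional factors. The paper states this more tersely, while you make explicit the (correct) bookkeeping that duality induces a ring isomorphism of the Grothendieck rings; the substance is the same.
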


\begin{proof} Recall from Proposition \ref{dualweight} that
  $L'(\Psib_{w(\omega_i),a}^{-1})^* \simeq L(\Psib_{w(\omega_i),a})$
  and from Remark \ref{tau} that $(\tau_{q^{-2h^\vee r^\vee}}(V^*))^*
  \simeq V$. We also have $[\lambda]^* \simeq
  [-\lambda]$. Therefore the relations of Conjecture
  \ref{exttq} are obtained by dualizing the relations of Conjecture
  \ref{exttqd}.
\end{proof}

\begin{example} Let $\g=sl_2$ and $V = L(Y_{1,bq^{2}})$ with $\chi_q(Y_{1,bq^{2}}) = Y_{1,bq^{2}} + Y_{1,bq^{4}}^{-1}$. Then $V^* = L(Y_{1,bq^4})$ and using Example \ref{exom}, we can compute the relation.

For $w = e$, the substitution is $Y_{1,a} \mapsto
[-\omega_1][L'(\Psib_{1,aq^{-1}}^{-1})]
/[L'(\Psib_{1,aq}^{-1})]$. Then
$$[L(Y_{1,b})] [L'(\Psib_{1,bq^3}^{-1})] = [-\omega_1] [L'(\Psib_{1,bq}^{-1})]  +  [\omega_1] [L'(\Psib_{1,bq^5}^{-1})].$$
For $w = s_1$,  the substitution is $Y_{1,a} \mapsto [\omega_1][L'(\Psib_{1,aq^{-3}})]
/[L'(\Psib_{1,aq^{-1}})]$. Then
$$[L(Y_{1,b})] [L'(\Psib_{1,bq})]= [\omega_1][L'(\Psib_{1,bq^{-1}})] + [-\omega_1][L'(\Psib_{1,bq^3})].$$
Thus, we recover the examples discussed in \cite[Section 4.3]{FH}.
\end{example}

 In order to relate the Dual Extended $TQ$-relations of
  Conjecture \ref{exttqd} to
  the polynomiality of the generalized Baxter operators discussed in
  the next subsection, we need to
  rewrite these relations in terms of the modules of the form
  $L'(\Omega(\Psib_{ww_0(\omega_j),b}), j \in I, b \in \C^\times$.
  
  \begin{prop}    \label{Omegarel}
    Conjecture
  \ref{exttqd} is equivalent to the following statement: Replace every
  variable $Y_{i,a}, i \in I$, appearing in
$\chi_q(V)$ with 
$$Y_{i,a} \mapsto [-w(\omega_i)]
  \dfrac{[L'(\Omega(\Psib_{ww_0(\omega_{\overline{i}}),a^{-1}q^{r^\vee h^\vee}q_i}))]}{[L'(\Omega(\Psib_{ww_0(\omega_{\overline{i}}),a^{-1}q^{r^\vee h^\vee}q_i^{-1}}))]}.
  $$
   By equating the resulting expression with
    $[\tau_{q^{-2h^\vee r^\vee}}(V^*)]$
    and clearing the denominators, we obtain an algebraic relation in
    $K_0(\OO^*)$.
    \end{prop}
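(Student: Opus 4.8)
The plan is to show that the substitution rule of Conjecture~\ref{exttqd} and the substitution rule in the statement of the Proposition are \emph{literally identical} — the $\ell$-weights occurring in the two numerators (resp. denominators) coincide — so that, after equating with $[\tau_{q^{-2h^\vee r^\vee}}(V^*)]$ and clearing denominators, one obtains the very same family of relations in $K_0(\OO^*)$. Thus the equivalence will be immediate once the spectral-parameter bookkeeping is done.

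First I would apply Proposition~\ref{imOm}, that is formula~\eqref{precise1}, with $w$ replaced by $ww_0$ and $i$ replaced by $\overline{i}$. Since $w_0$ is an involution in $W$ (so $(ww_0)w_0 = w$) and the bar involution on $I$ is an involution (so $\overline{\overline{i}} = i$), this yields, for every $b \in \C^\times$,
$$
\Omega\bigl(\Psib_{ww_0(\omega_{\overline{i}}),b}\bigr) = \Psib_{w(\omega_i),\,b^{-1}q^{r^\vee h^\vee}}^{-1}.
$$
Specializing $b = a^{-1}q^{r^\vee h^\vee}q_i$ gives $b^{-1}q^{r^\vee h^\vee} = a q_i^{-1}$, hence $\Omega(\Psib_{ww_0(\omega_{\overline{i}}),\,a^{-1}q^{r^\vee h^\vee}q_i}) = \Psib_{w(\omega_i),\,aq_i^{-1}}^{-1}$; specializing $b = a^{-1}q^{r^\vee h^\vee}q_i^{-1}$ gives $b^{-1}q^{r^\vee h^\vee} = a q_i$, hence $\Omega(\Psib_{ww_0(\omega_{\overline{i}}),\,a^{-1}q^{r^\vee h^\vee}q_i^{-1}}) = \Psib_{w(\omega_i),\,aq_i}^{-1}$.

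Finally I would substitute these two identities into the substitution rule of Conjecture~\ref{exttqd}, namely $Y_{i,a} \mapsto [-w(\omega_i)]\,[L'(\Psib_{w(\omega_i),aq_i^{-1}}^{-1})]/[L'(\Psib_{w(\omega_i),aq_i}^{-1})]$, to recover verbatim the substitution rule in the statement of the Proposition. Since equal $\ell$-weights define isomorphic simple lowest-$\ell$-weight modules (so equal classes in $K_0(\OO^*)$), the two relations obtained after equating with $[\tau_{q^{-2h^\vee r^\vee}}(V^*)]$ and clearing denominators are the same. There is no genuine obstacle here; the only point requiring care is tracking the shifts $q_i^{\pm1}$ against $q^{r^\vee h^\vee}$ and the directions of the involutions $w \leftrightarrow ww_0$ and $i \leftrightarrow \overline{i}$ in~\eqref{precise1}.
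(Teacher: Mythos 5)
Your proof is correct and takes essentially the same route as the paper, which simply says "Applying formula \eqref{precise1}, we obtain the statement of the proposition"; you have merely made explicit the substitution $w \mapsto ww_0$, $i \mapsto \overline{i}$ in \eqref{precise1} and the spectral-parameter specializations, which check out.
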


\begin{proof}
 Applying formula \eqref{precise1}, we obtain the
  statement of the proposition.
\end{proof}

For $w = w_0$, we proved the relations of Conjecture \ref{exttqd}
in \cite[Theorem 4.8]{FH}. The corresponding family of
    modules is
    $$\{ L'(\Omega(\Psib_{\omega_{\overline{i}},a^{-1}})) \}_{i \in I, a
        \in \C^\times} 
      = \{ L'(\Psib_{\omega_j,b})
      \}_{j \in I, b \in \C^\times}$$		
       (see
      \cite[Example 3.8]{FH4}), which is the family
      $\{  R_{j,b}^+ \}_{j \in I, b \in \C^\times}$ we discussed
      above.

      As explained in \cite{FH}, the relations of Conjecture
      \ref{exttqd} for $w=w_0$ imply the same
        relations on the joint eigenvalues (or spectra) of the
        corresponding transfer-matrices. This enables us to express
        the spectra of the transfer-matrices $t_V(z,u)$, where $V$ is
        a simple finite-dimensional $U_q(\ghat)$-module (these are the
        commuting {\em Hamiltonians} of the quantum integrable model
        of XXZ type associated to $U_q(\ghat)$), in terms of the
        spectra of the transfer-matrices corresponding to $R_{j,b}^+,
        j \in I, b \in \C^\times$. In \cite{FH}, we proved that
        the spectra of these transfer-matrices on the tensor products of simple
        finite-dimensional $U_q(\ghat)$-modules are in fact {\em
          polynomials}, up to an overall factor. Therefore, we obtain
        a description of the spectra of the Hamiltonians of the
        quantum integrable model of XXZ type associated to
        $U_q(\ghat)$ in terms of these polynomials. As explained in
        \cite{FH}, this description is closely related to the
        description in terms of the solutions of the corresponding
        Bethe Ansatz equations, but is in fact more
        direct. (In the case of $\g=sl_2$, this goes back to the results
        of Baxter on the polynomiality of his $Q$-operator
        \cite{Baxter}.)

  The discussion of the preceding paragraph concerns the
  $TQ$-relations in the case $w=w_0$ and their implications for the
  spectra of the XXZ type model associated to $U_q(\ghat)$. And now
  we want to generalize this to the case of an arbitrary $w \in W$.
  
  Namely, we can view the family of modules
			       $\{
  L'(\Omega(\Psib_{ww_0(\omega_j),b})) \}_{w \in W, j \in I, b \in \C^\times}$
				%= L'(\Omega(\Psib_{w(\omega_j),b}))\}_{w \in W, j \in I, b \in \C^\times}
				as an extension of the family $\{
        R_{j,b}^+ \}_{j \in I, b \in \C^\times}$ (corresponding to
        $w=w_0$) to all $w \in
        W$. As in the case $w=w_0$, given any $w \in W$, the
        corresponding $TQ$-relations, in the
        form stated in
        Proposition \ref{Omegarel}, imply the same relations on
        the spectra of the corresponding
        transfer-matrices. This enables us to express the spectra of the
  transfer-matrices $t_V(z,u)$, where $V$ is a simple finite-dimensional
  $U_q(\ghat)$-module, in terms of the spectra of the
  transfer-matrices corresponding to the modules
  $L'(\Omega(\Psib_{ww_0(\omega_j),b}))$ with a fixed $w \in W$.
        
        In the next subsection we
        will discuss the polynomiality of the spectra
        of the transfer-matrices
        of the modules
        $L'(\Omega(\Psib_{w(\omega_j),b})), w \in W, j \in
        I$. On the one hand, this gives us an alternative description
        of the spectra of the Hamiltonians of the quantum
  integrable model of XXZ type associated to $U_q(\ghat)$ in terms of
  these polynomials, for each $w \in W$. On the other hand, we will
  show (Proposition \ref{limitt}) that in a certain
  limit, the transfer-matrix of the module
        $L'(\Omega(\Psib_{w(\omega_j),b}))$ becomes the $X$-series
  $X_{w(\omega_i)}(zb)$. Thus, polynomiality of the spectra of these
  transfer-matrices is closely related to the polynomiality of the
  spectra of these $X$-series, which is one of the main themes of the
  present paper.

\subsection{Polynomiality of the generalized Baxter operators}\label{tmconj}

 Denote by $t_{w(\omega_i)}(z,u)$ the transfer-matrix of the
  simple module $L'(\Omega(\Psib_{w(\omega_i),1}))$ in the category
  $\mathcal{O}^*$. By Proposition \ref{imOm}, these are the
  $\OO^*$-versions of the generalized Baxter operators 
    (which are the
  transfer-matrices
corresponding to the simple modules $L(\Psib_{w(\omega_i),a})$ in the
category $\OO$, see Section \ref{exten}).
  By construction, the ordinary weights of
  $L'(\Psib_{w(\omega_i),1})$ belong to $Q^+ \subset Q$. Therefore,
  according to the results of
  Section \ref{Rgeq}, $t_{w(\omega_i)}(z,u)
  \in U(\wh\bb_-)[[z,v_i]]_{i\in I}$ and hence it has a
  well-defined limit at $v = 0$.

  The following result generalizes Theorem \ref{limit}.

\begin{prop}\label{limitt} The limit of the transfer-matrix
  $t_{w(\omega_i)}(z,u)$ at $v = 0$ is equal to the
  $X$-series $X_{w(\omega_i)}(z)$.
\end{prop}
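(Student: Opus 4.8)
The plan is to reduce the statement to the already-established case of the prefundamental representations $R_{j,b}^{+} = L'(\Psib_{j,b})$, for which $\lim_{v\to 0} t_{R_{j,b}^{+}}(z,u) = X_j(zb)$ by Theorem \ref{limit}. The key observation is that, by Definition \ref{subs} and Proposition \ref{imOm}, the $\ell$-weight $\Omega(\Psib_{w(\omega_i),1})$ is a monomial in the prefundamental $\ell$-weights $\Psib_{j,b}^{\pm 1}$, and that both $V \mapsto \lim_{v\to 0} t_V(z,u)$ and the homomorphism $\mathcal{I}'$ of Proposition \ref{tpsi} are multiplicative. So the proof breaks into three parts: (i) a multiplicativity property of the limit; (ii) packaging it into a group homomorphism on the monomial group ${\mc M}'$; (iii) identifying this homomorphism with $\mathcal{I}' \circ \Omega$ and invoking Proposition \ref{tpsi}(2).

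For (i), I would show that if $\Psib$ and $\Psib'$ are monomials in the $\Psib_{j,b}^{\pm 1}$ --- so that, as in Section \ref{Rgeq}, $L'(\Psib)$, $L'(\Psib')$ and $L'(\Psib\Psib')$ are objects of $\OO^{*}$ whose ordinary weights lie in $Q^{+}$ and whose transfer-matrices admit a limit at $v = 0$ --- then
$$
\lim_{v\to 0} t_{L'(\Psib\Psib')}(z,u) = \Bigl(\lim_{v\to 0} t_{L'(\Psib)}(z,u)\Bigr)\Bigl(\lim_{v\to 0} t_{L'(\Psib')}(z,u)\Bigr).
$$
Here $\varpi(\Psib) = \varpi(\Psib') = 0$, and dualizing the standard decomposition in $K_0(\OO)$ of a tensor product of highest-$\ell$-weight modules (see \cite{HJ, FH4}) gives, in $K_0(\OO^{*})$,
$$
[L'(\Psib)\otimes L'(\Psib')] = [L'(\Psib\Psib')] + \sum_{k} [L'(\Psib''_k)], \qquad \varpi(\Psib''_k) \in Q^{+}\setminus\{0\}.
$$
Since $t_V(z,u)$ depends only on the class $[V]$ and is multiplicative under tensor products by \eqref{alg}, this gives $t_{L'(\Psib)}(z,u)\, t_{L'(\Psib')}(z,u) = t_{L'(\Psib\Psib')}(z,u) + \sum_k t_{L'(\Psib''_k)}(z,u)$; but each $t_{L'(\Psib''_k)}(z,u)$ is supported in ordinary weights $\geq \varpi(\Psib''_k) > 0$, hence is divisible by some $v_j$ and vanishes as $v\to 0$. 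Comparing the two sides yields the displayed identity. Applying the same argument to $L'(\Psib)\otimes L'(\Psib^{-1})$, whose weight-$0$ constituent is the trivial module (with transfer-matrix $1$), gives $\lim_{v\to 0} t_{L'(\Psib^{-1})}(z,u) = \bigl(\lim_{v\to 0} t_{L'(\Psib)}(z,u)\bigr)^{-1}$; in particular $\lim_{v\to 0} t_{R_{j,b}^{-}}(z,u) = X_j(zb)^{-1}$.

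For (ii), since the $X$-series are invertible elements of $U_q(\wh{\Hlie})[[z]]$, iterating the multiplicativity of (i) (and using the inverse relation for negative powers) shows that there is a unique group homomorphism $\Phi : {\mc M}' \to (U_q(\wh{\Hlie})[[z]])^{\times}$ with $\Phi(\Psib_{j,b}) = \lim_{v\to 0} t_{L'(\Psib_{j,b})}(z,u) = X_j(zb)$, satisfying $\lim_{v\to 0} t_{L'(\Psib)}(z,u) = \Phi(\Psib)$ for every $\Psib \in {\mc M}'$. For (iii), comparing $\Phi$ and $\mathcal{I}' \circ \Omega$ on the generators and using \eqref{extia} together with $\Omega(\Psib_{j,b}) = \Psib_{j,b^{-1}}$, we get $\mathcal{I}'(\Omega(\Psib_{j,b})) = \mathcal{I}'(\Psib_{j,b^{-1}}) = X_j(zb) = \Phi(\Psib_{j,b})$, so $\Phi = \mathcal{I}' \circ \Omega$. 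Therefore, using $\Omega^{2} = \on{Id}$ and Proposition \ref{tpsi}(2),
$$
\lim_{v\to 0} t_{w(\omega_i)}(z,u) = \Phi\bigl(\Omega(\Psib_{w(\omega_i),1})\bigr) = \mathcal{I}'\bigl(\Omega^{2}(\Psib_{w(\omega_i),1})\bigr) = \mathcal{I}'(\Psib_{w(\omega_i),1}) = X_{w(\omega_i)}(z),
$$
which is the assertion.

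I expect step (i) to be the main obstacle: one must know both that $L'(\Psib)\otimes L'(\Psib')$ decomposes in $K_0(\OO^{*})$ with $L'(\Psib\Psib')$ as its unique constituent of minimal ordinary weight $0$ (with multiplicity one) and all other constituents strictly higher, and that the filtration of $t_V(z,u)$ by powers of the $v_j$ is governed by the ordinary-weight grading exactly as recalled in Section \ref{Rgeq}, so that a constituent of strictly positive ordinary weight really does vanish in the limit $v\to 0$. Both points should follow from the construction of the transfer-matrices via the universal $R$-matrix and from the structure of tensor products of highest- and lowest-$\ell$-weight modules in $\OO$ and $\OO^{*}$; granting them, the remaining steps are formal consequences of \eqref{alg}, Theorem \ref{limit}, and the identifications of $\Psib_{w(\omega_i),1}$ and $X_{w(\omega_i)}(z)$ already established.
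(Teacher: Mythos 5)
Your proposal is correct, but it takes a genuinely different route from the paper's proof. The paper argues directly on the single module $L'(\Omega(\Psib_{w(\omega_i),1}))$: the limit $v=0$ isolates the trace over the (one-dimensional, weight-$0$) lowest weight line, for weight reasons only the abelian factor $\mathcal{R}^0$ of the universal $R$-matrix contributes there, so the limit is read off from the eigenvalues of the Drinfeld--Cartan generators on the lowest $\ell$-weight vector; the identification with $X_{w(\omega_i)}(z)$ then comes from matching the factorization of $\Omega(\Psib_{w(\omega_i),1})$ in the $\Psib_{j,b}$'s (Definition \ref{subs}) with that of $X_{w(\omega_i)}(z)$ in the $X_j(zb)$'s (Proposition \ref{factW}). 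You instead bootstrap from the prefundamental case (Theorem \ref{limit}) by showing that $V\mapsto\lim_{v\to 0}t_V(z,u)$ is multiplicative on the relevant lowest-$\ell$-weight simples, using the $K_0(\OO^*)$ decomposition of a tensor product of prefundamentals: the unique constituent with nonzero weight-$0$ space is $L'(\Psib\Psib')$, and all other constituents have lowest ordinary weight in $Q^+\setminus\{0\}$, hence transfer-matrices divisible by some $v_j$ and vanishing in the limit. This is legitimate --- indeed it is precisely the mechanism the paper itself invokes right after Proposition \ref{limitt} to pass from $L'(\Omega(\Psib_{w(\omega_i),1}))$ to the (generally non-simple) tensor product $M$ of prefundamentals --- and your handling of negative exponents via $L'(\Psib)\otimes L'(\Psib^{-1})$ is sound since $X_j(zb)$ is invertible in $U_q(\wh{\Hlie})[[z]]$. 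What the paper's approach buys is independence from the tensor-decomposition input (everything is reduced to the structure of $\mathcal{R}^0$, reusing the computation of \cite[Section 7.1]{FH}); what yours buys is that the $R$-matrix analysis is quarantined entirely inside the already-proved Theorem \ref{limit}, at the cost of having to justify carefully, as you correctly flag, the multiplicity-one statement for the weight-$0$ constituent in $K_0(\OO^*)$ and the compatibility of the $v$-adic filtration of $t_V(z,u)$ with the ordinary-weight grading.
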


\begin{proof} We use the same argument as in the proof of Theorem
  \ref{limit} (which is \cite[Theorem 5.5]{FH} proved in \cite[Section
    7.1]{FH}). Namely, the transfer-matrix $t_{w(\omega_i)}(z)$ is a
  formal Taylor power series in the variables $v_j$ with coefficients
  in $U_q(\wh{\mathfrak{g}})[[z]]$. At the limit when all $v_j$ tend
  to $0$, we obtain the constant term which is the trace on the lowest
  weight vector of $L'(\Omega(\Psib_{w(\omega_i),1}))$
  (of weight 0).

  For weight reasons, only the abelian factor $\mathcal{R}^0$ can
  contribute to this trace (see \cite[Section 7.1]{FH}), and so the
  limit depends only on the eigenvalues of the Drinfeld-Cartan
  generators on the lowest weight vector of
  $L'(\Omega(\Psib_{w(\omega_i),1}))$. To compute the latter, we use
  the argument similar to the one
  in \cite[Section 5.3]{FH}.  Namely, comparing
    Proposition \ref{factW} and
  Definition \ref{subs}, we find that the factorization of the lowest
  $\ell$-weight  $\Omega(\Psib_{w(\omega_i),1})$ in terms of the
	$\Psib_{j,b}$'s is the same as the factorization of
  $X_{w(\omega_i)}(z)$ in terms of the
	$X_{\omega_j}(zb)$'s.
\end{proof}

Note that in general $L'(\Omega(\Psib_{w(\omega_i),1}))$ is {\em not}
isomorphic to the tensor product of prefundamental representations
\begin{equation}    \label{M}
M = \bigotimes_{j\in I, b\in\mathbb{C}^\times}
L'(\Psib_{j,b}^{m_{j,b}^i}),
\end{equation}
where we write $\Omega(\Psib_{w(\omega_i),1}) = 
  \prod_{j\in I, b\in\mathbb{C}^\times}
\Psib_{j,b}^{m_{j,b}^i}$. Indeed, $L'(\Omega(\Psib_{w(\omega_i),1}))$ is
a submodule of the tensor product $M$, but $M$ is {\em not}
necessarily simple. In this case, the transfer matrices $t_M(z,u)$
and $\prod_{j\in I, b\in\mathbb{C}^\times}
t_{\omega_j,b}(z,u)^{m_{j,b}^i}$ are not equal to each other. However, in
the limit when the $v_j$'s
tend to $0$, they become equal as the contributions of all additional
simple constituents in the tensor product\eqref{M} drop out.

Therefore, Proposition \ref{limitt} implies the following result.

\begin{prop} The limit of the transfer-matrix $t_{M}(z,u)$ at $v=0$
  is equal to $X_{w(\omega_i)}(z)$.
\end{prop}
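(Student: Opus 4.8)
The plan is to deduce this from Proposition~\ref{limitt} by showing that $t_M(z,u)$ and $t_{w(\omega_i)}(z,u)$ have the same limit at $v=0$. First I would use the additivity and multiplicativity of transfer-matrices recalled in \eqref{alg}: $t_M(z,u)$ depends only on the class $[M]\in K_0(\OO^*)$ and is additive on short exact sequences. The tensor product of the lowest $\ell$-weight vectors of the factors $L'(\Psib_{j,b}^{m_{j,b}^i})$ is a lowest $\ell$-weight vector of $M$ with $\ell$-weight $\prod_{j,b}\Psib_{j,b}^{m_{j,b}^i}=\Omega(\Psib_{w(\omega_i),1})$, and the simple module it generates is $L'(\Omega(\Psib_{w(\omega_i),1}))$. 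Hence in $K_0(\OO^*)$ one has $[M]=[L'(\Omega(\Psib_{w(\omega_i),1}))]+\sum_k[S_k]$ for certain simple modules $S_k$ of $\OO^*$, and therefore $t_M(z,u)=t_{w(\omega_i)}(z,u)+\sum_k t_{S_k}(z,u)$.

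The second step is to show that each $t_{S_k}(z,u)$ vanishes at $v=0$. Since $\varpi(\Psib_{j,b}^{m_{j,b}^i})=0$, each factor $L'(\Psib_{j,b}^{m_{j,b}^i})$ has a one-dimensional weight-$0$ subspace (its lowest weight space), and since all these factors have ordinary weights in $Q^+$, the same is true of $M$: its weight-$0$ subspace is one-dimensional, namely the tensor product of these lines, and it is contained in the submodule $L'(\Omega(\Psib_{w(\omega_i),1}))$, whose own weight-$0$ subspace coincides with this line (as $0$ is its lowest weight). Comparing the dimensions of the weight-$0$ components on the two sides of the $K_0$ identity then forces $(S_k)_0=0$ for every $k$. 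Now I would invoke the mechanism behind Section~\ref{Rgeq} and the proof of Proposition~\ref{limitt}: for a module $V$ of $\OO^*$ whose ordinary weights lie in $Q^+$, the transfer-matrix $t_V(z,u)$ lies in $U(\wh\bb_-)[[z,v_j]]_{j\in I}$ and its value at $v=0$ is the trace of the corresponding $L$-operator on the weight-$0$ subspace of $V$, because the $v_j$-grading records the difference of the weight from the lowest weight (here $0$). Applied to $V=S_k$ this gives $t_{S_k}(z,u)|_{v=0}=0$, whence $t_M(z,u)|_{v=0}=t_{w(\omega_i)}(z,u)|_{v=0}$, which equals $X_{w(\omega_i)}(z)$ by Proposition~\ref{limitt}.

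There is no substantial obstacle here, the statement being essentially a corollary of Proposition~\ref{limitt}; the only point that needs to be handled with some care is the identification of the $v=0$ limit of a transfer-matrix with the weight-$0$ part of its trace, which is exactly the argument used in the proof of Proposition~\ref{limitt}, so I would simply refer to it. I would also emphasize that the argument uses neither the simplicity of $M$ nor the literal containment of $L'(\Omega(\Psib_{w(\omega_i),1}))$ in $M$: only the identity in $K_0(\OO^*)$, the additivity of dimensions of weight spaces, and the vanishing of $(S_k)_0$ are used.
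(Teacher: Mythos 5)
Your argument is correct and is essentially the paper's own: the paper justifies the proposition by observing that $L'(\Omega(\Psib_{w(\omega_i),1}))$ sits inside $M$ and that ``the contributions of all additional simple constituents drop out'' at $v=0$, which is exactly what you establish via the $K_0(\OO^*)$ decomposition and the one-dimensionality of the weight-$0$ subspace of $M$. Your write-up merely makes explicit the dimension count showing $(S_k)_0=0$, which the paper leaves implicit.
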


This motivates the following conjecture. Let $L(m)$ be a simple
finite-dimensional $U_q(\wh{\mathfrak{g}})$-module.

\begin{conj}    \label{trm}
  The operator $(f_{i,m}(z))^{-1}t_{w(\omega_i)}(z,u)$ acting on any
  simple finite-dimensional $U_q(\ghat)$-module $L(m)$ is a polynomial
  in $z$.
\end{conj}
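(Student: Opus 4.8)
The plan is to establish, more generally, the analogue for an arbitrary $w\in W$ of our Theorem \ref{catOpol} (which is the case $w=e$, due to \cite{FH}): that $(f_{i,m}(z))^{-1}t_{w(\omega_i)}(z,u)$ is a polynomial in $z$, of the degree predicted by the $q$-character, on any tensor product $N$ of simple finite-dimensional $U_q(\ghat)$-modules with highest monomial $m$; Conjecture \ref{trm} is then the special case $N=L(m)$. The principal tool would be the Dual Extended $TQ$-relations of Section \ref{nver}: by Proposition \ref{Omegarel}, substituting the ratios of the classes $[L'(\Omega(\Psib_{w(\omega_j),\ast}))]$, $j\in I$, into $\chi_q(V)$ for a simple finite-dimensional module $V$ and clearing denominators produces an identity in $K_0(\OO^*)$; passing to the joint eigenvalues of the commuting transfer-matrices on $N$ turns it into a \emph{polynomial} identity --- its polynomiality coming from $\chi_q(V)$, from fixed shift factors $q_i^{\pm1}$, and from the scalar normalizations $f_{j,m}$ --- expressing a product of two of the relevant eigenvalue-series as a $\Z_{\geq0}$-combination of other such products. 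Note that, because of the $\Omega$-twist of Proposition \ref{imOm}, controlling $t_{w(\omega_i)}$ calls for the $TQ$-relations attached to $ww_0$; these are presently known for $ww_0\in\{e,s_i,w_0\}$, that is for $w\in\{w_0,\,s_iw_0,\,e\}$, by \cite{FH,FH2,FHR}.

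Granting the relevant $TQ$-relations, one would run the argument of \cite{FH} (and of \cite{Z} in the case of $w_0$): normalize at the extremal $\ell$-weight vector $v_w$ of $N$, whose $\ell$-weight is $T_w(m)$ and on which the $v=0$ specialization of the eigenvalue of $t_{w(\omega_i)}(z,u)$ is exactly $f_{i,m}(z)$ by Corollary \ref{inveigw}; by Theorem \ref{pma} every other $\ell$-weight of $N$ has the form $T_w(m)\prod_{j,b}(A_{j,b}^w)^{\pm1}$, so by Proposition \ref{taw} the $v=0$ eigenvalues of $(f_{i,m}(z))^{-1}t_{w(\omega_i)}(z,u)$ are products of linear factors $(1-zb^{-1})^{\pm1}$. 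The $TQ$-identity then has to do two jobs: rule out the negative powers (this is Conjecture \ref{mainc}, equivalently Conjecture \ref{twpol1} by Theorem \ref{equi}), and show that turning on $v\neq0$ destroys neither termination nor polynomiality. For the latter, a hypothetical pole or non-terminating tail of $(f_{i,m}(z))^{-1}t_{w(\omega_i)}(z,u)$ on a weight subspace $N_\lambda$ would, through the identity, have to be matched by genuinely polynomial data on the other side, a contradiction; tracking $z$-degrees against the multiplicity $ht_i(\omega-\lambda)$ as in Theorem \ref{catOpol} would then pin down the precise statement.

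The main obstacle is that Conjecture \ref{exttqd} is itself open for general $w$, so this route applies directly only when $ww_0\in\{e,s_i,w_0\}$. Those cases --- notably $w=w_0$ and $w=s_iw_0$, for which all ingredients are in place and essentially only the bookkeeping of the scheme above remains --- are the natural first targets. For arbitrary $w$ one would first have to prove Conjecture \ref{exttqd}, presumably by an induction on $\ell(w)$ factoring $T'_w$ through simple reflections; the delicate point is that $L'(\Omega(\Psib_{w(\omega_i),1}))$ is in general a \emph{proper} simple submodule of the tensor product \eqref{M} of prefundamental representations, so the extra simple constituents of that tensor product must be controlled --- they drop out only in the limit $v\to0$, which is precisely why Proposition \ref{limitt} is clean while the full statement is not. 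An alternative, avoiding the $TQ$-relations, would be to deform directly away from the degenerate point $v=0$, where Proposition \ref{limitt} together with Theorem \ref{rattw} (and, for simple reflections, Theorem \ref{symsimp}) already yields rationality, respectively polynomiality, of the limit, and to show that switching on the $v_j$ can create no pole in $z$ and destroy no termination. I expect this propagation of polynomiality from $v=0$ to $v\neq0$ --- in content essentially equivalent to Conjecture \ref{exttqd} --- to be the genuine difficulty in any complete proof of Conjecture \ref{trm}.
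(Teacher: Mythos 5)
This statement is Conjecture \ref{trm}, one of the open conjectures of the paper: the authors give no proof of it, proving only the case $w=e$ (via \cite{FH}) and citing \cite{Z} for a category-$\OO$ analogue attached to $w_0$. Your text is accordingly a research plan rather than a proof, and to your credit you say so explicitly and you map the landscape correctly: the relation between $t_{w(\omega_i)}(z,u)$ and the dual extended $TQ$-relations for $w'=ww_0$ via Propositions \ref{imOm} and \ref{Omegarel}, the role of the $v\to 0$ limit (Proposition \ref{limitt}) connecting the conjecture to Conjectures \ref{twpol}--\ref{twpol1}, and the identification of the un-degeneration from $v=0$ to $v\neq 0$ as the genuine difficulty. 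There is nothing here to compare with a proof in the paper, because the paper contains none.

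That said, two specific claims in your plan would not survive scrutiny. First, the proposed mechanism --- extracting polynomiality of the individual operators $(f_{i,m}(z))^{-1}t_{w(\omega_i)}(z,u)$ from the $TQ$-identity --- is not how even the known case $w=e$ works: in \cite{FH}, Theorem \ref{catOpol} is proved by a direct analysis of the prefundamental representations $R_{i,a}^+$ (their $\ell$-weight structure and a deformation argument), and only afterwards combined with the $TQ$-relation to describe the spectra of the $t_V(z,u)$. A single multiplicative identity in the commutative algebra of transfer-matrices cannot rule out a common non-polynomial factor cancelling between the numerator $[L'(\Omega(\Psib_{\cdot,aq_i^{-1}}))]$ and the denominator $[L'(\Omega(\Psib_{\cdot,aq_i}))]$, so asking the $TQ$-identity to ``do two jobs'' asks more of it than it can deliver; correspondingly, your assertion that for $w\in\{w_0,\,s_iw_0\}$ ``essentially only the bookkeeping remains'' overstates what the available ingredients give. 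Second, the claim that the propagation of polynomiality from $v=0$ to $v\neq 0$ is ``in content essentially equivalent to Conjecture \ref{exttqd}'' is unsupported: for $w=e$ both statements hold but are established by independent arguments in \cite{FH}, so neither is known to imply the other. These are the points at which your plan, if executed, would stall; the honest conclusion is that Conjecture \ref{trm} remains open beyond $w=e$, exactly as the paper states.
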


  \begin{thm}\cite{FH} Conjecture \ref{trm} holds if $w$ is the
    identity element.
\end{thm}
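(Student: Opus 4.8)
The plan is to deduce the $w=e$ case of Conjecture \ref{trm} directly from the polynomiality theorem for prefundamental transfer-matrices proved in \cite{FH} and recalled above as Theorem \ref{catOpol}; essentially all that is needed is a translation of notation. The three things to check are: (i) that for $w=e$ the transfer-matrix $t_{\omega_i}(z,u)$ of Conjecture \ref{trm} equals $t_{R_{i,1}^+}(z,u)$; (ii) that the normalizing scalar $f_{i,m}(z)$ in Conjecture \ref{trm} is the same as the one appearing in Theorem \ref{catOpol}; and (iii) that the per-weight-space polynomiality of Theorem \ref{catOpol} gives polynomiality of the operator on all of $L(m)$.

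For (i): since $Y_{\omega_i,1}=Y_{i,1}$, Definition \ref{subs} gives $\Psib_{\omega_i,1}=\Psib_{i,1}$, and as $\Omega$ merely inverts the spectral parameter, $\Omega(\Psib_{\omega_i,1})=\Psib_{i,1}$ (consistent with Proposition \ref{imOm} and with the identification, recalled before Section \ref{tmconj}, of the $w=w_0$ family with $\{R_{j,b}^+\}_{j\in I,b\in\C^\times}$). By Proposition \ref{dualweight}, $(L'(\Psib_{i,1}))^*\simeq L(\Psib_{i,1}^{-1})=L_{i,1}^-$, which is exactly the property defining $R_{i,1}^+$; hence $L'(\Omega(\Psib_{\omega_i,1}))\simeq R_{i,1}^+$ and $t_{\omega_i}(z,u)=t_{R_{i,1}^+}(z,u)$. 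For (ii): by Theorem \ref{limit} the limit of $t_{R_{i,1}^+}(z,u)$ at $v=0$ is $X_i(z)=X_{\omega_i}(z)$, so its eigenvalue on the highest $\ell$-weight vector of $L(m)$ is precisely $f_{i,m}(z)$ as given by formula \eqref{fimz}; thus the scalar in Conjecture \ref{trm} agrees with the one in Theorem \ref{catOpol}, and on that one-dimensional weight space the renormalized operator is the constant polynomial $1$ (cf. Proposition \ref{firstpol}).

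With these identifications in hand, Theorem \ref{catOpol} applied to the (simple, hence one-factor tensor product) module $N=L(m)$ states that $f_{i,m}(z)^{-1}t_{\omega_i}(z,u)$, restricted to each weight subspace $L(m)_\lambda$, is a polynomial in $z$ of degree $ht_i(\varpi(m)-\lambda)$. For (iii), note that $t_{R_{i,1}^+}(z,u)$, like every transfer-matrix, commutes with the Drinfeld--Cartan generators $k_j$ and therefore preserves the weight grading of $L(m)$; hence it is block-diagonal with respect to that grading, and being polynomial in $z$ on each block it is a polynomial in $z$ as an operator on $L(m)$. This is precisely Conjecture \ref{trm} for $w=e$. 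I do not expect a genuine obstacle in this case --- the content is entirely contained in \cite{FH} --- and in the limit $v\to 0$ the statement specializes, via Theorem \ref{limit}, to the $w=e$ case of Conjecture \ref{twpol}, namely \cite[Theorem 5.17]{FH}.
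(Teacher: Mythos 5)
Your argument is correct and is exactly the content behind the paper's citation: the paper offers no proof beyond referring to \cite{FH}, and the relevant result it recalls from \cite{FH} is Theorem \ref{catOpol}, which you correctly reduce to after identifying $L'(\Omega(\Psib_{\omega_i,1}))$ with $R_{i,1}^+$ via Definition \ref{subs} and Proposition \ref{dualweight}. The remaining observations --- that the normalization $f_{i,m}(z)$ is literally the same and that weight-space-wise polynomiality gives operator polynomiality since the transfer-matrix preserves the (finite) weight decomposition --- are exactly the intended notational translation.
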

    
\begin{rem} An analogue of this statement, for a simple module in the category $\mathcal{O}$ corresponding to the
  longest element of the Weyl group, was proved in \cite{Z}. It also
  implies Conjecture \ref{twpol} for the longest element of the Weyl
  group because the limit of the corresponding transfer-matrix
  is the $X$-series associated to this element.
\end{rem}

\begin{rem}    \label{last}
Using the dual extended $TQ$-relations from Section \ref{nver}, we now
obtain a conjectural description, for each fixed $w \in W$, of the
spectra of the transfer-matrices $t_V(z,u)$, where $V$ are simple
finite-dimensional $U_q(\ghat)$-modules in terms of polynomial
eigenvalues of the renormalized generalized Baxter operators
$(f_{i,m}(z))^{-1} t_{w(\omega_i)}(z,u)$. This
generalizes the description of the spectra of the $t_V(z,u)$'s in
terms of the polynomial eigenvalues of $ (f_{i,m}(z))^{-1}
t_{R_{i,1}^+}(z,u), i \in I$, proved in \cite{FH}, which corresponds
to the case $w=w_0$ (we note that it had been conjectured in
\cite{Fre}). Thus, we obtain many different descriptions, labeled by
elements
$w \in W$, of the spectra of the Hamiltonians (the transfer-matrices
$t_V(z,u)$) of the corresponding quantum integrable model of XXZ
type.
\end{rem}

\medskip

\noindent The manuscript has no associated data.

\end{document}